\newcommand{\C}{\mathbb C}
\newcommand{\R}{\mathbb R}
\newcommand{\Z}{\mathbb Z}
\newcommand{\Q}{\mathbb Q}
\newcommand{\F}{\mathbb F}
\newcommand{\Flim}{\F_{\rm lim}} 
\newcommand{\tbF}{\widetilde{\mathbb{F}}}
\newcommand{\Proj}{\mathbb P}
\newcommand{\D}{\mathbb D} 
\newcommand{\HH}{\mathbb H} 
\newcommand{\J}{\mathbb{J}}
\newcommand{\seminf}{$\frac{\infty}{2}$} 
\newcommand{\ev}{\operatorname{ev}}
\newcommand{\age}{\operatorname{age}}
\newcommand{\Hom}{\operatorname{Hom}}
\newcommand{\End}{\operatorname{End}}
\newcommand{\Pic}{\operatorname{Pic}}
\newcommand{\Ker}{\operatorname{Ker}}
\newcommand{\Image}{\operatorname{Im}}
\newcommand{\id}{\operatorname{id}}
\newcommand{\Grading}{\operatorname{\mathsf{Gr}}}
\newcommand{\unit}{\operatorname{\boldsymbol{1}}}
\newcommand{\Eff}{\operatorname{Eff}}
\newcommand{\ch}{\operatorname{ch}} 
\newcommand{\tch}{\widetilde{\operatorname{ch}}} 
\newcommand{\Td}{\operatorname{Td}} 
\newcommand{\tTd}{\widetilde{\operatorname{Td}}}
\newcommand{\pr}{\operatorname{pr}}
\newcommand{\Aut}{\operatorname{Aut}} 
\newcommand{\Gr}{\operatorname{Gr}} 
\newcommand{\inv}{\operatorname{inv}}
\newcommand{\pt}{\operatorname{pt}} 
\newcommand{\Lie}{\operatorname{Lie}} 
\newcommand{\Euler}{\operatorname{Euler}}
\newcommand{\sfT}{\mathsf{T}} 
\newcommand{\sfH}{\mathsf{H}} 
\newcommand{\cF}{\mathcal{F}}
\newcommand{\cA}{\mathcal{A}}
\newcommand{\cU}{\mathcal{U}}
\newcommand{\cO}{\mathcal{O}}
\newcommand{\cX}{\mathcal{X}}
\newcommand{\cH}{\mathcal{H}}
\newcommand{\tcH}{\widetilde{\mathcal{H}}}
\newcommand{\cM}{\mathcal{M}}
\newcommand{\cR}{\mathcal{R}}
\newcommand{\cV}{\mathcal{V}}
\newcommand{\cQ}{\mathcal{Q}}  
\newcommand{\cC}{\mathcal{C}}
\newcommand{\tcF}{\widetilde{\mathcal{F}}}
\newcommand{\cRz}{\mathcal{R}^{(0)}}
\newcommand{\hatnabla}{\widehat{\nabla}}
\newcommand{\hK}{\widehat{K}}
\newcommand{\tC}{\widetilde{C}}
\newcommand{\hGamma}{\widehat{\Gamma}}
\newcommand{\fra}{\mathfrak{a}}
\newcommand{\gl}{\mathfrak{gl}}
\newcommand{\ov}{\overline}
\newcommand{\oi}{{\overline{\imath}}}
\newcommand{\oj}{{\overline{\jmath}}}
\newcommand{\iu}{\pmb{\mathtt{i}}}
\newtheorem{theorem}{Theorem}[section]
\newtheorem{lemma}[theorem]{Lemma}
\newtheorem{proposition}[theorem]{Proposition}
\newtheorem{proposition-conjecture}[theorem]{Proposition-Conjecture}
\newtheorem{definition-proposition}[theorem]{Definition-Proposition}
\theoremstyle{definition}
\newtheorem{definition}[theorem]{Definition}
\newtheorem{remark}[theorem]{Remark}
\newtheorem{assumption}[theorem]{Assumption}
\def\pair#1#2{\langle #1,#2\rangle}
\def\parfrac#1#2{\frac{\partial{#1}}{\partial #2}}
\def\corr#1{\left\langle #1 \right\rangle}
\begin{document}

\title{$tt^*$-geometry in quantum cohomology} 
\author{Hiroshi Iritani}
\address{Faculty of Mathematics, Kyushu University, 6-10-1, 
Hakozaki, Higashiku, Fukuoka, 812-8581, Japan.}
\email{iritani@math.kyushu-u.ac.jp}
\address{Department of Mathematics, Imperial College London, 
Huxley Building, 180, Queen's Gate, London, 
SW7 2AZ, United Kingdom.}
\subjclass[2000]{Primary~14N35, 53D45; Secondary~32G20, 32G34, 34M55}
\email{h.iritani@imperial.ac.uk}  
\begin{abstract} 
We study possible real structures in the space 
of solutions to the quantum differential equation. 
We show that, under mild conditions, 
a real structure in orbifold quantum cohomology 
yields a pure and polarized $tt^*$-geometry  
near the large radius limit. 
We compute an example of $\Proj^1$ 
which is pure and polarized 
over the whole K\"{a}hler moduli space 
$H^2(\Proj^1,\C^*)$.  
\end{abstract} 
\maketitle 

\section{Introduction} 
The quantum cohomology is a family $(H^*(X), \circ_\tau)$ 
of commutative rings parametrized by $\tau \in H^*(X)$ 
and satisfies the following ``integrability":  
A family $\nabla^z$ of connections (Dubrovin connection) 
on the trivial vector bundle 
$H^*(X) \times H^*(X) \to H^*(X)$ 
\[
\nabla^z = d + \frac{1}{z} \sum_{i=1}^N 
(\phi_i \circ_\tau) d t^i,   \quad 
z\in \C^* 
\] 
is flat for all $z\in \C^*$. 
Here we take a basis $\{\phi_i\}_{i=1}^N$ of $H^*(X)$ 
and linear co-ordinates  $\{t^i\}_{i=1}^N$ dual to it 
and $\tau = \sum_{i=1}^N t^i \phi_i$. 
The Dubrovin connection $\nabla^z$ is extended 
to a flat connection $\hatnabla$ 
over $\{(\tau,z) \in H^*(X)\times \C^*\}$ 
and this defines a local system $R$ of 
$\C$-vector spaces over $H^*(X)\times \C^*$.  
In this paper, we consider its real structure --- 
a sub local system $R_\R\subset R$ of $\R$-vector spaces. 
When written in a frame compatible with 
a real structure, the holomorphic 
connection $\nabla^z$ gains 
antiholomorphic part 
and gives rise to $tt^*$-geometry 
or topological--anti-topological fusion 
\cite{cecotti-vafa-top-antitop, 
dubrovin-fusion, hertling-tt*}. 

The study of real structures in quantum cohomology 
is motivated from mirror symmetry. 
The quantum cohomology of a Calabi-Yau threefold 
$X$ defines a variation of Hodge structure (VHS) 
over $H^{1,1}(X)$ \cite{morrison-mathaspects, cox-katz}
\[
F^3 \subset F^2 \subset F^1 \subset F^0 = H^{*,*}(X), 
\quad F^p = \bigoplus_{k\ge p} H^{3-k,3-k}(X)   
\]
by the Dubrovin connection $\nabla^z$. 
Mirror symmetry conjecture says that this is 
isomorphic to the VHS 
$\check{F}^p = \bigoplus_{k\ge p} H^{k,3-k}(Y)$ 
of a mirror Calabi-Yau $Y$ 
over the complex moduli space of $Y$. 
While the VHS of $Y$ has a natural real structure 
$H^3(Y,\R)$ (and an integral structure $H^3(Y,\Z)$), the VHS 
associated to the quantum cohomology of $X$ 
does not seem to have a natural real structure. 
In the companion paper \cite{iritani-Int}, 
we studied mirror symmetry for toric orbifolds.  
The calculation there suggested that the $K$-theory and 
the $\hGamma$-class of $X$ define a natural integral 
(hence real) structure on the quantum cohomology VHS. 
The same rational structure was also proposed 
by Katzarkov-Kontsevich-Pantev \cite{KKP} independently.

In this paper, we use the language of 
\emph{semi-infinite variation of Hodge structure} 
(henceforth \seminf VHS) 
due to Barannikov \cite{barannikov-qpI,barannikov-proj} 
to deal also with non Calabi-Yau case. 
Here we briefly explain the \seminf VHS of 
quantum cohomology. 
Let $L(\tau,z)$ be the fundamental solution 
for Dubrovin-flat sections $\nabla^z s =0$: 
\[
L \colon H^*(X)\times \C^* \to \End(H^*(X)), \quad 
\nabla^z L(\tau,z) \phi =0, \quad \phi \in H^*(X), 
\]
which is explicitly given by the gravitational descendants 
(see (\ref{eq:fundamentalsol_L})).   
Following Coates-Givental \cite{coates-givental}, 
we introduce an infinite dimensional vector space $\cH^X$ by 
\[
\cH^X := H^*(X) \otimes \cO(\C^*) 
\]
where $\cO(\C^*)$ denotes the space of 
holomorphic functions on $\C^*$ with co-ordinate $z$. 
We identify $\cH^X$ with the space of $\nabla^z$-flat sections 
by the map $\cH^X \ni \alpha(z) \mapsto L(\tau,z) \alpha(z)$.  
We define the family $\F_\tau$ 
of ``semi-infinite" subspaces of $\cH^X$ as  
\[
\F_\tau := L(\tau,z)^{-1}(H^*(X)\otimes \cO(\C)) \subset \cH^X, \quad 
\tau \in H^*(X), 
\]
where $\cO(\C)$ denotes the space of holomorphic 
functions on $\C$. 
The semi-infinite flag $\cdots\subset 
z^{-1} \F_\tau \subset \F_\tau \subset z \F_\tau \subset \cdots$ 
satisfies properties 
analogous to the usual finite dimensional VHS: 
\begin{alignat}{2}
\label{eq:introd_Griffiths}
&\parfrac{}{t^i} \F_\tau \subset z^{-1} \F_\tau 
\quad  
&& \text{(Griffiths Transversality)}   \\
\label{eq:introd_bilinearrel} 
&(\F_\tau,\F_\tau)_{\cH^X} \subset \cO(\C) 
\quad 
&& \text{(Bilinear Relations)} 
\end{alignat} 
where the pairing $(\cdot,\cdot)_{\cH^X}$ 
is defined by $(\alpha,\beta)_{\cH^X} = 
\int_X \alpha(-z) \cup \beta(z)$ for $\alpha,\beta\in \cH^X$.  
We call this (the moving subspace realization of) 
the \emph{quantum cohomology \seminf VHS}.

A real structure of the quantum cohomology 
(a sub $\R$-local system $R_\R$ of $\hatnabla$) 
induces a subspace $\cH^X_\R$ of $\cH^X$ 
\[
\cH^X_\R :=\left \{\alpha(z) \in \cH^X\; ; \; 
L(\tau,z)\alpha(z) \in R_{\R,(\tau,z)} \text{ when } 
|z|=1\right\}
\]
and the involution $\kappa_\cH \colon \cH^X  \to \cH^X$ 
fixing $\cH^X_\R$ and satisfying   
$\kappa_\cH(f(z) \alpha ) = \ov{f(1/\ov{z})} 
\kappa_\cH(\alpha)$ for $f(z) \in \cO(\C^*)$. 
% The involution $\kappa_\cH$ coincides, along $|z|=1$,  
% with the complex conjugation of sections 
% with respect to the real subbundle. 
For a ``good" real structure, 
we hope the following properties:  
\begin{alignat}{2}
\label{eq:introd_hodgedecomp} 
&\F_\tau \oplus z^{-1} \kappa_\cH(\F_\tau) = \cH^X, 
\quad
&&\text{(Hodge Decomposition)} \\
\label{eq:introd_bilinearineq}  
&(\kappa_\cH(\alpha),\alpha)_{\cH^X}>0, \quad 
\alpha\in \F_\tau \cap \kappa_{\cH}(\F_\tau)
\setminus\{0\}.   
\quad 
&&\text{(Bilinear Inequality)}  
\end{alignat} 
When $X$ is Calabi-Yau, these 
(\ref{eq:introd_Griffiths}), (\ref{eq:introd_bilinearrel}), 
(\ref{eq:introd_hodgedecomp}), (\ref{eq:introd_bilinearineq}) 
are translations of the corresponding properties 
for the finite dimensional VHS. 
The properties (\ref{eq:introd_hodgedecomp}) and 
(\ref{eq:introd_bilinearineq}) are called 
\emph{pure} and \emph{polarized} respectively.
Our main theorem states that 
(\ref{eq:introd_hodgedecomp}), 
(\ref{eq:introd_bilinearineq}) indeed hold  
near the ``large radius limit" 
\emph{i.e.} $\tau= -x \omega$, $\Re(x)\to\infty$ 
for some K\"{a}hler class $\omega$, 
under mild assumptions on the real 
structures: 

\begin{theorem}[Theorem \ref{thm:pure_polarized}] 
\label{thm:introd_pure_polarized} 
Assume that a real structure is invariant 
under the monodromy (Galois) transformations given by  
$G^{\cH}(\xi), \xi\in H^2(X,\Z)$ 
(see (\ref{eq:Galois_H}) and 
Proposition \ref{prop:char_A_real_int_str}). 
If the condition (\ref{eq:kappa_induces_Inv}) 
(which is empty when $X$ is a manifold) holds,  
$\F_\tau$ is pure (\ref{eq:introd_hodgedecomp}) 
near the large radius limit. 
If moreover the condition 
(\ref{eq:leadingterm_kappaV}) holds and 
$H^*(X) = \bigoplus_p H^{p,p}(X)$, 
$\F_\tau$ is polarized (\ref{eq:introd_bilinearineq}) 
near the large radius limit.  
\end{theorem}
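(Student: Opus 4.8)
The plan is to reduce both assertions to the behaviour at the large radius limit itself and then propagate to a neighbourhood by an openness/nilpotent-orbit argument; both (\ref{eq:introd_hodgedecomp}) and (\ref{eq:introd_bilinearineq}) are open conditions, so the real work is to understand the limiting object and the rate of approach to it.

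First I would record the asymptotics of $\F_\tau$ along the ray $\tau=-x\omega$. From the explicit descendant formula for $L$ and the divisor equation, the operator $e^{x\omega/z}L(-x\omega,z)$ is holomorphic in $q:=e^{-x}$ near $q=0$ with value $\id$ there, because the quantum corrections carry a factor $q^{\pair{\omega}{d}}$ with $\pair{\omega}{d}>0$ for effective $d\neq 0$. Hence $\F_{-x\omega}=(\id+O(q))\,e^{x\omega/z}\bigl(H^*(X)\otimes\cO(\C)\bigr)$, i.e.\ $\F_{-x\omega}$ differs by $O(q)$, uniformly on $|z|=1$, from the nilpotent orbit $e^{x\omega/z}\F_{\lim}$ with $\F_{\lim}:=H^*(X)\otimes\cO(\C)$; here the relevant nilpotent is $N=\omega\cup/z$, whose attached $\mathfrak{sl}_2$ is the Hard Lefschetz $\mathfrak{sl}_2$ of the K\"ahler class $\omega$ acting on $H^*(X)$.

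Second --- the crux --- I would describe $\kappa_\cH(\F_\tau)$ near the limit. Since $\kappa_\cH$ is anti-holomorphic it does not extend over $q=0$, so one cannot simply let $\Re x\to\infty$; the Galois-invariance hypothesis $G^\cH(\xi)R_\R=R_\R$ is precisely what pins it down. Using the explicit form of $G^\cH(\xi)$ from (\ref{eq:Galois_H}) and Proposition \ref{prop:char_A_real_int_str} --- built from the shift $e^{-2\pi\iu\xi/z}$ and a $z$-independent factor involving the grading operator $\mu$, $\rho=c_1(X)\cup$ and the $\hGamma$-class --- together with the defining relation $\kappa_\cH(f(z)\alpha)=\overline{f(1/\bar z)}\kappa_\cH(\alpha)$, one shows that, modulo a $G^\cH(\xi)$-conjugation (which preserves both purity and polarization) and a correction uniformly small on $|z|=1$ as $\Re x\to\infty$, the operator $\kappa_\cH$ at $-x\omega$ agrees with a fixed anti-linear operator $\kappa_{\lim}$ on $\cH^X$; consequently $z^{-1}\kappa_\cH(\F_{-x\omega})$ equals, up to such corrections, $z^{-1}e^{\bar x\omega/\bar z}\kappa_{\lim}(\F_{\lim})$, the twist $e^{\bar x\omega/\bar z}$ being the complex conjugate, on $|z|=1$, of the twist $e^{x\omega/z}$ appearing in $\F_{-x\omega}$. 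The orbifold condition (\ref{eq:kappa_induces_Inv}), empty when $X$ is a manifold, is exactly what makes $\kappa_{\lim}$ well defined on $\cH^X$, and (\ref{eq:leadingterm_kappaV}) fixes its leading term.

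Finally I would verify purity and polarization in the limit and propagate. Purity of $\F_{-x\omega}$ for $\Re x$ large amounts to $e^{x\omega/z}\F_{\lim}\oplus z^{-1}e^{\bar x\omega/\bar z}\kappa_{\lim}(\F_{\lim})=\cH^X$; decomposing into Fourier modes in $z$ this becomes, mode by mode, a finite-dimensional nondegeneracy that is exactly the Hard Lefschetz theorem and the Lefschetz decomposition for $\omega$ on $H^*(X)$ --- valid for all $\Re x$ large --- with (\ref{eq:kappa_induces_Inv}) ensuring $\kappa_{\lim}$ is available, and the $O(q)$ error absorbed because complementarity is open. For polarization, $(\kappa_\cH(\alpha),\alpha)_{\cH^X}>0$ on $\F_{-x\omega}\cap\kappa_\cH(\F_{-x\omega})\setminus\{0\}$ reduces, via the pairing $\int_X$ and the $\mu$-grading (with the factor $\alpha(-z)$ in $(\cdot,\cdot)_{\cH^X}$ contributing fixed signs per degree), to positivity of the Hermitian form built from $\int_X$, the powers of $\omega\cup$ and $\kappa_{\lim}$ on $H^*(X)$, i.e.\ to the Hodge--Riemann bilinear relations for $\omega$; this needs $H^*(X)=\bigoplus_p H^{p,p}(X)$, so that the $\mu$-grading coincides with the genuine Hodge grading and the Hodge--Riemann signs come out right, while (\ref{eq:leadingterm_kappaV}) guarantees the leading term of $\kappa_{\lim}$ matches the Weil operator. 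Positivity being open, the same argument propagates it, together with purity, to a full neighbourhood of the large radius limit (alternatively one may invoke a semi-infinite analogue of Schmid's nilpotent- and $\mathfrak{sl}_2$-orbit theorems). The main obstacle is the second step --- making precise the ``convergence'' of the anti-holomorphic $\kappa_\cH(\F_\tau)$ by combining the nilpotent orbit for $\F_\tau$ with Galois invariance and matching the growth of $e^{x\omega/z}$ against that of its conjugate on the unit circle --- together with the Hodge--Riemann signature bookkeeping in the last step.
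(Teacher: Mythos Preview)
Your overall architecture matches the paper's: reduce to the nilpotent orbit $e^{-x\omega/z}\F_{\lim}$, establish purity there via the Lefschetz $\mathfrak{sl}_2$ and polarization via Hodge--Riemann, then transfer to $\F_{-x\omega}$. Two points, however, need correction.

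First, your ``second step'' is over-engineered and partly misstated. The involution $\kappa_\cH$ is a \emph{fixed} antilinear operator on $\cH^X$, independent of $\tau$; there is no separate $\kappa_{\lim}$ to construct and no $G^\cH(\xi)$-conjugation to perform. The Galois invariance is used only once, and earlier: it forces $\omega/z$ to be purely imaginary for $\kappa_\cH$ (Proposition~\ref{prop:basic_realstr}), whence $\kappa_\cH\circ e^{-x\omega/z}=e^{\bar x\omega/z}\circ\kappa_\cH$ directly (your exponent $\bar x\omega/\bar z$ is wrong). So $\kappa_\cH(e^{-x\omega/z}\F_{\lim})=e^{\bar x\omega/z}\kappa_\cH(\F_{\lim})$ with no further analysis, and after stripping $e^{-x\omega/z}$ the purity question for the nilpotent orbit becomes whether $\F_{\lim}\cap e^{2t\omega/z}\kappa_\cH(\F_{\lim})\to\F_{\lim}/z\F_{\lim}$ is an isomorphism, $t=\Re(x)$. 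The paper then passes via $\kappa_\cH=z^{-\mu}\kappa_\cV z^\mu$ to a finite-dimensional statement about the weight filtration and proves it with the $\mathfrak{sl}_2$-triple $(\fra,\fra^\dagger,h)$; this is your Hard Lefschetz step, carried out correctly. Also, the $\hGamma$-class plays no role here: the theorem is for an arbitrary real structure, and only the abstract consequences (\ref{eq:H2_imaginary})--(\ref{eq:kappaV_weightfiltr}) of Galois invariance are used.

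Second, and this is the genuine gap: ``complementarity is open'' does \emph{not} suffice to pass from the nilpotent orbit to $\F_{-x\omega}$. As $t\to\infty$ the nilpotent orbit is pure, but its Birkhoff factors $B_t,C_t$ (equivalently, the basis $\varpi_t(u)$ of the intersection) grow \emph{polynomially} in $t$ --- see the factor $(2t)^{(\deg+k-n_v)/2}$ in (\ref{eq:lift_to_FcapkappaF}) --- so the nilpotent orbit is drifting toward the boundary of the big cell at a polynomial rate. Openness alone gives no uniform neighbourhood. What the paper does (Step~2) is write the loop for $\F_{-x\omega}$ as $B_t\cdot(B_t^{-1}Q_x^{-1}B_t)(C_t\bar Q_xC_t^{-1})\cdot C_t$ and observe that the middle factor is $\unit+O(e^{-\epsilon t})$ precisely because the polynomial growth of $B_t,C_t$ is beaten by the exponential decay $Q_x=\unit+O(e^{-\epsilon_0 t})$ of the quantum corrections. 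A quantitative inverse-function argument for Birkhoff factorization on a Sobolev loop group (Appendix~\ref{subsec:orderestimate_Birkhoff}) then gives $\tilde B_x,\tilde C_x=\unit+O(e^{-\epsilon t})$, and the same estimate carries the Hermitian form, whose leading term scales like $(2t)^k$, safely across. Your sketch omits this polynomial-versus-exponential balance, and without it neither purity nor polarization transfers.
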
 

In the theorem above, 
we allow $X$ to be an orbifold or 
a smooth Deligne-Mumford stack.   
See Theorem \ref{thm:pure_polarized} 
for a more precise statement. 
Given a real structure 
satisfying (\ref{eq:introd_hodgedecomp}) 
and (\ref{eq:introd_bilinearineq}),  
quantum cohomology gives a Hermitian vector 
bundle with a connection $D$ 
and endomorphisms $\kappa, C,\tC, \cU, \cQ$ 
satisfying the \emph{$tt^*$-equations} (Proposition \ref{prop:CV-str}). 
This structure (\emph{$tt^*$-geometry}) was discovered by 
Cecotti-Vafa \cite{cecotti-vafa-top-antitop, 
cecotti-vafa-classification}  
and has been studied by Dubrovin \cite{dubrovin-fusion} 
and Hertling \cite{hertling-tt*}.  
$tt^*$-geometry also gives an example of 
a harmonic bundle or a twistor structure 
of Simpson \cite{simpson-mixedtwistor}.  
Closely related results 
have been shown in a more abstract setting for 
TERP structures in \cite{hertling-tt*, hertling-sevenheck}.  
In fact, when $X$ is Fano and 
the K\"{a}hler class $\omega$ is $c_1(X)$, 
the conclusions of Theorem \ref{thm:introd_pure_polarized} 
can be deduced from \cite[Theorem 7.3]{hertling-sevenheck}.

This paper is structured as follows. 
In Section \ref{sec:generalVHS}, 
we introduce real structures for 
a general graded \seminf VHS. 
This section is a translation of the work of 
Hertling \cite{hertling-tt*} 
in terms of Barannikov's semi-infinite Hodge structure.  
In Section \ref{sec:A-model}, 
we study real structures in (orbifold) quantum cohomology and 
prove Theorem \ref{thm:introd_pure_polarized}. 
We also give a review of the $\hGamma$-real structure 
given by $K$-theory 
\cite{iritani-realint-preprint, iritani-Int, KKP} 
and see that this real structure satisfies 
the assumptions in Theorem \ref{thm:introd_pure_polarized}. 
In Section \ref{sec:exampleP1tt*}, we calculate 
an example of $tt^*$-geometry for $X = \Proj^1$ 
with respect to the $\hGamma$-real structure. 
In this case, by Sabbah \cite{sabbah}, 
the $tt^*$-geometry is pure and polarized 
over the whole K\"{a}hler moduli space $H^2(\Proj^1,\C^*)$.  
We confirm Cecotti-Vafa's calculation 
\cite{cecotti-vafa-exactsigma} by a recursive 
Birkhoff factorization.

The convergence of the quantum cohomology 
is assumed throughout the paper. 
Also we consider only the even parity 
part of the cohomology, \emph{i.e.} 
$H^*(X)$ means  $\bigoplus_k H^{2k}(X)$. 
Note that the orbifold cohomology $H_{\rm orb}^*(\cX)$ 
is denoted also by $H_{\rm CR}^*(\cX)$ 
in the literature. 

This paper is a revision of part of the preprint 
\cite{iritani-realint-preprint} concerning real structures. 
The integral structure part of \cite{iritani-realint-preprint} 
was separated in \cite{iritani-Int}.

\vspace{5pt}  
\noindent  
{\bf Acknowledgments} 
Thanks are due to Tom Coates, Alessio Corti, Hsian-Hua Tseng 
for many useful discussions and their encouragement. 
This project is motivated by the joint work with them. 
The author is grateful to Martin Guest for 
useful discussions on the loop group and Iwasawa factorization. 
Guest also had the idea \cite{guest_durham, guest-qc_int} 
to incorporate real structures in quantum cohomology 
(independently).   
The author thanks Claus Hertling and the referees 
of the previous preprint \cite{iritani-realint-preprint}
for many useful comments. 
This research was supported by 
Inoue Research Award for Young Scientists, 
Grant-in-Aid for Young Scientists (B), 
19740039, 2007 
and EPSRC(EP/E022162/1).

\section{Real structures on \seminf VHS} 
\label{sec:generalVHS}
We introduce real (and integral) structures for 
a general semi-infinite variation of Hodge structure 
or \seminf VHS. 
We explain that a \seminf VHS with a real structure 
yields a Cecotti-Vafa structure when it is pure. 
A \seminf VHS was originally introduced by 
Barannikov \cite{barannikov-qpI, barannikov-proj}.  
A \seminf VHS with a real structure considered here corresponds to 
the TERP structure due to Hertling \cite{hertling-tt*} 
(see Remark \ref{rem:relation_hertling}). 
The exposition here largely follows 
the line of \cite{hertling-tt*, CIT:I}. 

\subsection{Definition}  
\label{subsec:realintstr_VHS_def} 
Let $\cM$ be a smooth complex analytic space 
and $\cO_\cM$ be the analytic structure sheaf on $\cM$. 
We introduce an additional complex plane 
$\C$ with co-ordinate $z$ and 
consider the product $\cM \times \C$. 
Let $\pi \colon \cM \times \C \to \cM$ 
be the projection. 
A \seminf VHS is a module over the push-forward 
$\pi_*\cO_{\cM\times \C}$ of the analytic structure sheaf
on $\cM\times \C$.  
Let $\Omega_\cM^1$ be the sheaf 
of holomorphic 1-forms on $\cM$ 
and $\Theta_\cM$ be the sheaf 
of holomorphic tangent vector fields on $\cM$.  

\begin{definition}[\cite{CIT:I}]  
A \emph{semi-infinite variation of Hodge structures}, 
or \seminf VHS is a locally free 
$\pi_*\cO_{\cM\times \C}$-module $\cF$ 
of rank $N$ endowed with a holomorphic flat connection 
\[ 
\nabla\colon \cF\rightarrow z^{-1} \cF \otimes \Omega^1_{\cM}
\]
and a perfect pairing 
\[
(\cdot,\cdot)_\cF\colon 
\cF\times \cF \to \pi_* \cO_{\cM\times \C}
\]
satisfying 
\begin{align*}
\nabla_X (f s) &= (Xf)s + f \nabla_X s, \\
[\nabla_X, \nabla_Y] s & = \nabla_{[X,Y]} s, \\  
(s_1, f(z)s_2)_{\cF} &= (f(-z)s_1,s_2)_{\cF} = f(z)(s_1,s_2)_{\cF},  \\
(s_1, s_2)_{\cF} & = (s_2, s_1)_{\cF}|_{z\to -z}, \\
X (s_1,s_2)_\cF &= (\nabla_X s_1,s_2)_\cF + (s_1,\nabla_X s_2)_\cF 
\end{align*} 
for sections $s,s_1,s_2$ of $\cF$, 
$f\in \pi_*\cO_{\cM\times \C}$ and $X, Y \in \Theta_\cM$. 
Here, $\nabla_X$ is a map from $\cF$ to $z^{-1}\cF$ 
and $z^{-1} \cF$ is regarded as a submodule 
of $\cF \otimes_{\pi_*\cO_{\cM\times \C}} \pi_*\cO_{\cM\times \C^*}$. 
The first two properties are part of the 
definition of a flat connection. 
The pairing $(\cdot,\cdot)_\cF$ 
is perfect in the sense that 
it induces an isomorphism of the fiber 
$\cF_\tau$ at $\tau\in \cM$ with 
$\Hom_{\cO(\C)}(\cF_\tau,\cO(\C))$, 
where $\cO(\C)$ is the space of holomorphic functions on $\C$. 

A {\it graded \seminf VHS} is a \seminf VHS $\cF$ endowed with a 
$\C$-endomorphism $\Grading \colon \cF \to \cF$ and 
an Euler vector field $E\in H^0(\cM,\Theta_\cM)$ satisfying 
\begin{align*}
\Grading(f s_1) &= (2 (z\partial_z + E) f ) s_1 +  f \Grading(s_1), \\
[\Grading, \nabla_X] &= \nabla_{2[E,X]}, \quad X\in \Theta_\cM, \\
2(z\partial_z+E) (s_1,s_2)_{\cF} 
&= (\Grading(s_1),s_2)_{\cF} + 
(s_1,\Grading(s_2))_{\cF} - 2n(s_1,s_2)_{\cF} 
\end{align*} 
where $n\in \C$. \qed 
\end{definition} 

A \seminf VHS is a semi-infinite analogue of the usual 
finite dimensional VHS without a real structure. 
The ``semi-infinite" flag
$\cdots \subset z \cF \subset \cF 
\subset z^{-1} \cF \subset z^{-2}\cF \subset \cdots$ 
plays the role of the Hodge filtration. 
The flat connection $\nabla_X$ shifts this filtration by one --- 
this is an analogue of the Griffiths transversality. 

The structure of a graded \seminf VHS $\cF$ 
can be rephrased in terms of a locally free sheaf 
$\cRz$ over $\cM\times \C$ with 
a flat connection $\hatnabla$. 
Here $\cRz$ is a locally free 
$\cO_{\cM\times \C}$-module of rank $N$  
such that $\cF = \pi_* \cRz$. 
We define the meromorphic connection\footnote
{The extended connection $\hatnabla$ over $\cM\times \C$ 
was denoted by $\nabla$ 
in the companion paper \cite{iritani-Int}.}
$\hatnabla$ on $\cRz$ 
\[
\hatnabla\colon 
\cRz \longrightarrow \frac{1}{z} \cRz \otimes 
\left
(\pi^* \Omega^1_{\cM} \oplus \cO_{\cM\times\C} \frac{dz}{z}
\right)  
\]
by the formula 
\begin{equation}
\label{eq:hatnabla}
\hatnabla s := \nabla s 
+ (\frac{1}{2} \Grading(s) - \nabla_E s - \frac{n}{2} s) 
\frac{dz}{z}    
\end{equation} 
for a section $s$ of $\cF = \pi_* \cRz$. 
It is easy to see that the conditions on 
$\Grading$ and $\nabla$ above imply that 
$\hatnabla$ is also flat.  
The pairing $(\cdot,\cdot)_{\cF}$ on $\cF$ 
induces a non-degenerate pairing on $\cRz$: 
\[
(\cdot,\cdot)_{\cRz} \colon 
(-)^*\cRz \otimes \cRz \rightarrow \cO_{\cM\times \C}, 
\]
where $(-)\colon \cM\times \C \to \cM\times \C$ is a map 
$(\tau,z) \mapsto (\tau,-z)$. 
This pairing is flat with respect to 
$\hatnabla$ on $\cRz$ and $(-)^*\hatnabla$ on $(-)^*\cRz$. 
Denote by $\cR$ the restriction of $\cRz$ to $\cM\times \C^*$. 
Since $\hatnabla$ is regular outside $z=0$, 
$\cR$ is a flat vector bundle over $\cM\times \C^*$.  
Let $R \to \cM\times \C^*$ be the 
$\C$-local system underlying the flat vector bundle $\cR$.  
This has a pairing $(\cdot,\cdot)_R \colon (-)^*R \otimes_\C R \to \C$ 
induced from $(\cdot,\cdot)_{\cRz}$.  
\begin{definition} 
\label{def:realintstr} 
Let $\cF$ be a graded \seminf VHS with $n\in \Z$. 
A \emph{real structure} on \seminf VHS 
is a sub $\R$-local system $R_\R\to \cM\times \C^*$ of $R$
such that $R = R_\R \oplus \iu R_\R$ 
and the pairing takes values in $\R$ on $R_\R$
\[
(\cdot,\cdot)_R \colon 
(-)^*R_\R \otimes_\R R_\R \rightarrow \R.  
\]
An \emph{integral structure} on \seminf VHS is 
a sub $\Z$-local system $R_\Z \to \cM\times \C^*$ of $R$   
such that $R= R_\Z \otimes_\Z \C$  
and the pairing takes values in $\Z$ on $R_\Z$
\[
(\cdot,\cdot)_R \colon 
(-)^* R_\Z \otimes R_\Z \rightarrow \Z 
\]
and is unimodular \emph{i.e.} induces an isomorphism 
$R_{\Z,(\tau,-z)} \cong \Hom(R_{\Z,(\tau,z)},\Z)$ for 
$(\tau,z)\in \cM\times \C^*$. \qed  
\end{definition} 
\begin{remark}
\label{rem:relation_hertling} 
A graded \seminf VHS with a real structure defined here 
is almost equivalent to a TERP($n$) structure 
introduced by Hertling \cite{hertling-tt*}. 
The only difference is that the flat connection 
$\hatnabla$ in TERP($n$) structure is not assumed 
to arise from a grading operator $\Grading$ 
and an Euler vector field $E$.  
Therefore, a graded \seminf VHS gives a TERP structure, 
but the converse is not true in general. 
For the convenience of the reader, we give  
differences in convention between \cite{hertling-tt*} and us. 
Let $\tilde{\nabla}$, $\tilde{R}$, $\tilde{R}_\R$, 
$\tilde{P}\colon \tilde{R} \otimes (-)^*\tilde{R} \to \C$ 
denote the flat connection, $\C$-local system, sub $\R$-local system  
and a pairing appearing in \cite{hertling-tt*}. 
They are related to our $\hatnabla$, 
$R$, $R_\R$, $(\cdot,\cdot)_{\cRz}$ as 
\begin{align*}
&\tilde\nabla = \hatnabla + \frac{n}{2} \frac{dz}{z}, \\ 
&\tilde{R} = (-z)^{-\frac{n}{2}} R, \quad 
\tilde{R}_{\R} = (-z)^{-\frac{n}{2}} R_\R,    \\ 
&\tilde{P}(s_1, s_2) =  z^n (s_2,s_1)_{\cRz}. 
\end{align*} 
Then $\tilde{R}$ is the local system defined by $\tilde{\nabla}$, 
$\tilde P$ is $\tilde{\nabla}$-flat and 
\[
\tilde{P}(\tilde{R}_{\R,(\tau,z)}\times \tilde{R}_{\R,(\tau,-z)}) 
= z^n \left(
z^{-n/2}R_{\R,(\tau,-z)}, (-z)^{-n/2} R_{\R,(\tau,z)}
\right)_R  
\subset  \iu^n \R.  
\]
\end{remark}

\subsection{Semi-infinite period map}
\label{subsec:semi-inf_period} 

\begin{definition}%[The space of multi-valued flat sections]  
For a graded \seminf VHS $\cF$,  
the \emph{spaces $\cH$, $\cV$ of multi-valued flat sections}  
are defined to be 
\begin{align*}
\cH &:= \{ s\in \Gamma(\widetilde{\cM} \times \C^*, \cR) \;;\; 
\nabla s =0 \}, \\
\cV &:= \{ s\in \Gamma((\cM\times \C^*)\sptilde,\cR) 
\;;\; \hatnabla s =0 \},  
\end{align*} 
where $\widetilde{\cM}$ and $(\cM\times \C^*)\sptilde$ 
are the universal 
covers of $\cM$ and $\cM\times \C^*$ respectively. 
The space $\cH$ is a free $\cO(\C^*)$-module, 
where $\cO(\C^*)$ 
is the space of holomorphic functions on $\C^*$.  
The space $\cV$ is a finite dimensional $\C$-vector space
identified with the fiber of the local system $R$. 
The flat connection $\hatnabla$ and the 
pairing $(\cdot,\cdot)_{\cRz}$ on $\cRz$ 
induce an operator 
\[
\hatnabla_{z\partial_z}\colon \cH\to \cH
\] 
and a pairing 
\[
(\cdot,\cdot)_{\cH}\colon \cH\times \cH\rightarrow \cO(\C^*)   
\]
satisfying 
\begin{align*}
(f(-z) s_1, s_2)_{\cH} &= (s_1, f(z) s_2)_{\cH} =
f(z)(s_1,s_2)_{\cH} \quad 
f(z) \in \cO(\C^*), \\
(s_1,s_2)_{\cH} &= (s_2,s_1)_{\cH}|_{z\mapsto -z} \\
z\partial_z(s_1,s_2)_{\cH}& = 
(\hatnabla_{z\partial_z} s_1, s_2)_{\cH} + 
(s_1,\hatnabla_{z\partial_z} s_2)_{\cH}. 
\end{align*} 
We regard the free $\cO(\C^*)$-module $\cH$ 
with the operator $\hatnabla_{z\partial_z}$ 
as a holomorphic flat vector bundle 
$(\sfH,\hatnabla_{z\partial_z})$ over $\C^*$: 
\begin{equation}
\label{eq:sfH}
\sfH \to \C^*,  \quad \cH = \Gamma(\C^*, \cO(\sfH)).  
\end{equation} 
Then $\cV$ can be identified with 
the space of multi-valued flat sections of $\sfH$. 
A pairing $(\cdot,\cdot)_{\cV}\colon \cV\otimes_\C \cV \to \C$ 
is defined by 
\begin{equation}
\label{eq:pairing_def_V}
(s_1,s_2)_{\cV} := (s_1(\tau,e^{\pi \iu} z), s_2(\tau,z))_{R}
\end{equation} 
where $s_1(\tau,e^{\pi \iu}z)\in \cR_{(\tau,-z)}$ 
denote the parallel translation of 
$s_1(\tau,z)\in \cR_{(\tau,z)}$ along the counterclockwise path 
$[0,1]\ni \theta \mapsto e^{\pi \iu \theta}z$.  \qed 
\end{definition} 

A \seminf VHS $\cF$ on $\cM$ defines a map 
from $\widetilde{\cM}$ to the 
\emph{Segal-Wilson Grassmannian} of $\cH$. 
For $u \in \cF_\tau$ at $\tau\in \widetilde{\cM}$, 
there exists a unique flat section $s_u\in \cH$ such that 
$s_u(\tau) =u$. 
This defines an embedding of a fiber $\cF_\tau$ into $\cH$:   
\begin{equation}
\label{eq:embedding_J}
\J_\tau \colon \cF_\tau \longrightarrow \cH, \quad 
u\longmapsto s_u, \quad 
\tau \in \widetilde{\cM}.  
\end{equation} 
We call the image $\F_\tau\subset \cH$ of this embedding 
the \emph{semi-infinite Hodge structure}. 
This is a free $\cO(\C)$-module of rank $N$. 
The family $\{\F_\tau\subset \cH\}_{\tau\in \widetilde{\cM}}$ 
of subspaces gives the 
\emph{moving subspace realization of \seminf VHS}. 
Fix a $\cO(\C^*)$-basis $e_1,\dots, e_N$ of $\cH$. 
Then the image of a local frame 
$s_1,\dots,s_N$ of $\cF$ over $\pi_*\cO_{\cM\times \C}$ 
under $\J_\tau$ can be written as 
$\J_\tau(s_j) = \sum_{i=1}^N e_i J_{ij}(\tau,z)$. 
When $z$ is restricted to $S^1=\{|z|=1\}$, 
the $N\times N$ matrix $(J_{ij}(\tau,z))$
defines an element of the smooth loop group 
$LGL_N(\C)=C^\infty(S^1,GL_N(\C))$. 
Another choice of a local basis of $\cF$ changes 
the matrix $(J_{ij}(\tau,z))$ by right multiplication by 
a matrix with entries in $\cO(\C)$. 
Thus the Hodge structure $\F_\tau$ gives a point 
$(J_{ij}(\tau,z))_{ij}$ in the smooth Segal-Wilson Grassmannian 
$\Gr_{\frac{\infty}{2}}(\cH) := LGL_N(\C)/L^+GL_N(\C)$ 
\cite{pressley-segal}. Here $L^+GL_N(\C)$ consists 
of smooth loops which are the boundary values 
of holomorphic maps $\{z\in \C\;;\; |z|<1\}\to GL_N(\C)$.  
The map 
\[
\widetilde{\cM} \ni \tau \longmapsto \F_\tau 
\in \Gr_{\frac{\infty}{2}}(\cH)
\]
is called the \emph{semi-infinite period map}. 
%We also refer to $\F_\tau$ as a Hodge structure at $t$. 
\begin{proposition}[{\cite[Proposition 2.9]{CIT:I}}]
\label{prop:property_VHS}
The semi-infinite period map $\tau\mapsto \F_\tau$ satisfies: 
\begin{itemize}
\item[(i)] $X \F_\tau \subset z^{-1} \F_\tau$ for $X\in \Theta_\cM$;   
\item[(ii)] $(\F_\tau,\F_\tau)_{\cH} \subset \cO(\C)$;  
\item[(iii)] $(\hatnabla_{z\partial_z}+E) \F_\tau 
\subset \F_\tau$. In particular, 
$\hatnabla_{z\partial_z}\F_\tau
\subset z^{-1}\F_\tau$.  
\end{itemize} 
The first property {\rm (ii)} 
is an analogue of Griffiths transversality 
and the second {\rm (iii)} is the Hodge-Riemann bilinear relation.
\end{proposition}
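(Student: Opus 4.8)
The three assertions concern the moving family $\{\F_\tau\subset\cH\}_{\tau\in\widetilde\cM}$, and the first thing to pin down is their reading: in (i), $X\F_\tau$ means the Lie derivative of this family along $X$, and in (iii) the symbol $E$ is likewise the Lie derivative along the Euler field, while $\hatnabla_{z\partial_z}$ acts through the operator on $\cH$ introduced above. The plan is then to reduce everything to an explicit flat frame. Since $\widetilde\cM$ is simply connected I would choose an $\cO(\C^*)$-basis $e_1,\dots,e_N$ of $\cH$; by definition of $\cH$ these are $\nabla$-flat sections of $\cR$ over $\widetilde\cM\times\C^*$. For a local section $u$ of $\cF$, written $u=\sum_i u^i(\tau,z)e_i$ with $u^i$ holomorphic in $\tau$ and in $z$ near $z=0$, one has $\J_\tau(u|_\tau)=\sum_i u^i(\tau,z)e_i$ with the coefficients frozen at $\tau$. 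The one step that needs care, and on which everything else hinges, is the intertwining identity
\[
\partial_X\bigl(\J_\tau(u|_\tau)\bigr)=\J_\tau\bigl((\nabla_X u)|_\tau\bigr),\qquad X\in\Theta_\cM,
\]
which in the frame $\{e_i\}$ is simply the statement that both sides equal $\sum_i(Xu^i)(\tau,z)e_i$, using $\nabla_X e_i=0$. Since $[z\partial_z,X]=0$ for $X\in\Theta_\cM$, the operator $\hatnabla_{z\partial_z}$ preserves $\cH$, and the same computation yields $\hatnabla_{z\partial_z}\bigl(\J_\tau(u|_\tau)\bigr)=\J_\tau\bigl((\hatnabla_{z\partial_z}u)|_\tau\bigr)$.

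Granting the intertwining identity, (i) is immediate: the definition of a \seminf VHS forces $\nabla\colon\cF\to z^{-1}\cF\otimes\Omega^1_\cM$, so $(\nabla_X u)|_\tau\in z^{-1}\cF_\tau$ and therefore $\partial_X\J_\tau(u|_\tau)\in z^{-1}\F_\tau$. For (ii), I would observe that for $u,v\in\cF_\tau$ the element $(\J_\tau u,\J_\tau v)_\cH\in\cO(\C^*)$ is, being the pairing of two $\nabla$-flat sections and hence $\tau$-independent, equal to its value read off at the point $\tau$ itself, namely $(u,v)_\cF$; and this lies in $\cO(\C)$ because $(\cdot,\cdot)_\cF$ takes values in $\pi_*\cO_{\cM\times\C}$. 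Hence $(\F_\tau,\F_\tau)_\cH\subset\cO(\C)$.

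For (iii), contracting the defining formula (\ref{eq:hatnabla}) for $\hatnabla$ with $z\partial_z$ gives, for a section $u$ of $\cF$,
\[
\hatnabla_{z\partial_z}u=z\partial_z u+\tfrac12\Grading(u)-\nabla_E u-\tfrac{n}{2} u .
\]
Here $z\partial_z u$, $\Grading(u)$ and $u$ lie in $\cF$ (using $\Grading\colon\cF\to\cF$ and that $u$ is holomorphic at $z=0$), whereas $\nabla_E u$ only lies in $z^{-1}\cF$ by Griffiths transversality; hence $(\hatnabla_{z\partial_z}u)|_\tau\in z^{-1}\cF_\tau$, which via the second intertwining identity gives $\hatnabla_{z\partial_z}\F_\tau\subset z^{-1}\F_\tau$. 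Finally, the Lie derivative of the family along $E$ sends $\J_\tau(u|_\tau)$ to $\J_\tau((\nabla_E u)|_\tau)$ by the first intertwining identity, so the $-\nabla_E u$ term cancels and $(\hatnabla_{z\partial_z}+E)\J_\tau(u|_\tau)=\J_\tau\bigl((z\partial_z u+\tfrac12\Grading(u)-\tfrac{n}{2}u)|_\tau\bigr)\in\F_\tau$. I expect no genuine obstacle here — the proof is an unwinding of definitions — but the one place where a careless reading would turn the argument circular is the intertwining identity for $\partial_X\J_\tau$, and passing to a global $\nabla$-flat frame on the simply connected $\widetilde\cM$ is precisely what makes that step transparent.
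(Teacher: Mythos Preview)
The paper does not actually prove this proposition; it simply cites \cite[Proposition 2.9]{CIT:I}. Your argument is correct and is the natural unwinding of the definitions, so there is nothing substantive to compare.

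One small slip worth flagging: contracting (\ref{eq:hatnabla}) with $z\partial_z$ yields
\[
\hatnabla_{z\partial_z}u=\tfrac12\Grading(u)-\nabla_E u-\tfrac{n}{2}u,
\]
\emph{without} an additional $z\partial_z u$ term. The operator $z\partial_z$ is already contained in $\tfrac12\Grading$, as one sees from the Leibniz rule $\Grading(fs)=(2(z\partial_z+E)f)s+f\Grading(s)$; equivalently, checking that $\hatnabla(fs)=(df)s+f\hatnabla s$ forces this reading. This does not affect your proof of (iii), since the extra term you wrote lies in $\cF$ anyway, but your final displayed identity for $(\hatnabla_{z\partial_z}+E)\J_\tau(u|_\tau)$ should read $\J_\tau\bigl((\tfrac12\Grading(u)-\tfrac{n}{2}u)|_\tau\bigr)$.
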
 

In terms of the flat vector bundle $\sfH\to\C^*$ (\ref{eq:sfH})  
(such that $\cH = \Gamma(\C^*,\cO(\sfH))$), 
the Hodge structure $\F_\tau\subset \cH$ 
is considered to be an extension 
of $\sfH$ to $\C$ such that the flat connection has 
a pole of Poincar\'{e} rank 1 at $z=0$. 

Real and integral structures on \seminf VHS define the following  
subspaces $\cH_\R$, $\cV_\R$, $\cV_\Z$: 
\begin{align}
\label{eq:real_subspaces}
\begin{split}  
\cH_\R &:= \{ s\in \cH \; ;\; s(\tau,z) \in R_{\R,(\tau,z)} 
\text{ for } 
\tau\in \widetilde{\cM} \text{ and }  |z|=1 \} \\  
\cV_\R &:= \{ s\in \cV \; ;\; s(\tau,z) \in R_{\R,(\tau,z)} 
\text{ for } (\tau,z)\in (\cM\times \C^*)\sptilde \} \\
\cV_\Z &:= \{ s\in \cV \; ;\; s(\tau,z) \in R_{\Z,(\tau,z)}  
\text{ for } (\tau,z) \in (\cM\times \C^*)\sptilde \}   
\end{split} 
\end{align} 
Then $\cH_\R$ becomes a (not necessarily free) module 
over the ring $C^h(S^1,\R)$:
\begin{equation*}
C^h(S^1,\R):=\left\{
f(z) \in \cO(\C^*) \;;\; f(z)\in \R \text{ if } |z|=1 \right \}.   
\end{equation*} 
Note that we have $\cO(\C^*) = C^h(S^1,\R)\oplus \iu C^h(S^1,\R)$. 
The involution $\kappa$ on $\cO(\C^*)$ corresponding to the 
real form $C^h(S^1,\R)$ is given by  
\[
\kappa(f)(z) =\ov{f(\gamma(z))}, \quad f(z) \in \cO(\C^*),  
\]  
where $\gamma(z) = 1/\ov{z}$ and the $\ov{\phantom{A}}$ 
in the right-hand side 
is the complex conjugate. 
We also have $\cH \cong \cH_\R \oplus \iu \cH_\R$.   
This real form $\cH_\R\subset \cH$ defines 
an involution $\kappa_\cH\colon \cH\rightarrow \cH$ such that 
$\kappa_{\cH}(\alpha+\iu \beta) = \alpha - \iu \beta$ 
for $\alpha,\beta \in \cH_\R$. This satisfies 
\begin{align}
\label{eq:property_kappaH}
\begin{split}
\kappa_\cH (f s) &= \kappa(f)\kappa_{\cH}(s),  \\   
\kappa_\cH \hatnabla_{z\partial_z} &= - 
\hatnabla_{z\partial_z} \kappa_{\cH}, \\ 
\kappa((s_1,s_2)_{\cH}) &= (\kappa_{\cH}(s_1), 
\kappa_{\cH}(s_2))_{\cH}.   
\end{split}
\end{align} 
Note that $\kappa_{\cH}$ 
matches with the real involution on $R_{(\tau,z)}$ 
over the equator $\{|z|=1\}$. 
Similarly, we have $\cV= \cV_\R \oplus \iu \cV_\R$; 
we denote by $\kappa_{\cV}\colon \cV\to\cV$ 
the involution defined by the real structure $\cV_\R$.  

\begin{remark}
\label{rem:Cinfty} 
In the context of the smooth Grassmannian, 
it is more natural to work over $C^\infty(S^1,\C)$  
instead of $\cO(\C^*)$, where $S^1=\{|z|=1\}$. 
We put 
\[
\tcH := \cH \otimes_{\cO(\C^*)} C^\infty(S^1,\C), \quad 
\tbF_\tau := \F_\tau \otimes_{\cO(\C)} \cO(\D_0),  
\]
where $\cO(\D_0)$ is a subspace of $C^\infty(S^1,\C)$ consisting of 
functions which are the boundary values of holomorphic functions 
on the interior of the disc $\D_0=\{z\in \C\;;\; |z|\le 1\}$. 
The involution $\kappa_\cH\colon \tcH\to \tcH$ and 
the real form $\tcH_\R$ is defined similarly 
and the same properties hold. 
Conversely, using the flat connection 
$\hatnabla_{z\partial_z}$ in the $z$-direction, 
one can recover $\F_\tau$ from $\tbF_\tau$ 
since flat sections of $\hatnabla_{z\partial_z}$ 
determine an extension of the bundle on $\D_0$ to $\C$. 
\end{remark}

\subsection{Pure and polarized \seminf VHS}
Following Hertling \cite{hertling-tt*}, 
we define an extension $\hK$ of $\cRz$ across $z=\infty$.  
The properties ``pure and polarized" for $\cF$ 
are defined in terms of this extension.

\begin{definition}[Extension of $\cRz$ across $z=\infty$] 
\label{def:extension_infty}  
Let $\gamma\colon \cM\times \Proj^1 \rightarrow \cM\times \Proj^1$ 
be the map defined by $\gamma(\tau,z) = (\tau, 1/\ov{z})$. 
Let $\ov{\cM}$ denote the complex conjugate of $\cM$, \emph{i.e.} 
$\ov{\cM}$ is the same as $\cM$ as a real-analytic manifold 
but holomorphic functions on $\ov{\cM}$ are  
anti-holomorphic functions on $\cM$. 
The pull-back $\gamma^*\cRz$ of $\cRz$ 
has the structure of an 
$\cO_{\cM\times \ov{(\Proj^1\setminus\{0\})}}$-module. 
Thus its complex conjugate $\ov{\gamma^*\cRz}$ 
has the structure of 
an $\cO_{\ov{\cM}\times (\Proj^1\setminus \{0\})}$-module.  
Regarding $\cRz$ and $\ov{\gamma^*\cRz}$ 
as real-analytic vector bundles over $\cM\times \C$ 
and $\cM\times (\Proj^1\setminus\{0\})$, 
we glue them along $\cM\times \C^*$ by the fiberwise map  
\begin{equation}
\label{eq:gluingmap}
\begin{CD}
\cRz_{(\tau,z)} @>{\kappa}>> \ov{\cRz_{(\tau,z)}} 
@>{P(\gamma(z),z)}>> \ov{\cRz_{(\tau,\gamma(z))}} 
= \ov{\gamma^*\cRz_{(\tau,z)}},  
\end{CD} 
\quad z\in \C^*. 
\end{equation} 
Here the first map $\kappa$ is the real involution on 
$\cRz_{(\tau,z)}$ with respect to 
the real form $R_{\R,(\tau,z)}$ 
and the second map $P(\gamma(z),z)$ 
is the parallel translation for 
the flat connection $\hatnabla$ along the path 
$[0,1] \ni t \mapsto (1-t) z + t \gamma(z)$. 
Define $\hK\to \cM\times \Proj^1$ to 
be the real-analytic complex vector bundle 
obtained by gluing $\cRz$ and $\ov{\gamma^*\cRz}$ in this way. 
Notice that $\hK|_{\tau\times \Proj^1}$ 
has the structure of a holomorphic vector bundle 
since the gluing map (\ref{eq:gluingmap}) 
preserves the holomorphic structure 
in the $\Proj^1$-direction. \qed 
\end{definition} 

\begin{definition} 
\label{def:pure} 
A graded \seminf VHS $\cF$ with a real structure 
is called \emph{pure} at $\tau\in \cM$ if 
$\hK|_{\{\tau\}\times \Proj^1}$ 
is trivial as a holomorphic vector bundle on $\Proj^1$. 
\qed 
\end{definition} 

A pure graded \seminf VHS with a real structure here 
corresponds to the (trTERP) structure in \cite{hertling-tt*}. 
Here we follow the terminology in \cite{hertling-sevenheck}. 

We rephrase the purity in terms of the moving 
subspace realization
$\{\F_\tau \subset \cH\}$. 
When we identify $\cH$ with the space of global 
sections of $\hK|_{\{\tau\}\times \C^*} = 
\cR|_{\{\tau\}\times \C^*}$, 
it is easy to see that the involution 
$\kappa_\cH\colon \cH\to \cH$ is induced 
by the gluing map (\ref{eq:gluingmap}). 
Then $\F_\tau$ is identified with 
the space of holomorphic sections of $\hK|_{\{\tau\}\times \C^*}$ 
which can extend to $\{\tau\}\times \C$; 
$\kappa_\cH(\F_\tau)$ is identified with the space of holomorphic 
sections of $\hK|_{\{\tau\}\times \C^*}$ which can extend to 
$\{\tau\}\times (\Proj^1\setminus\{0\})$. 
Similarly, 
$\tbF_\tau$ (resp. $\kappa_{\cH}(\tbF_\tau)$) is 
identified with the space of smooth sections 
of $\hK|_{\{\tau\}\times S^1}$ which can extend 
to holomorphic sections on $\D_0$ (resp. $\D_\infty$), 
where $\D_0 = \{z\in \C\;;\; |z|\le 1\}$, 
$\D_\infty = \{z\in \C\cup \{\infty\} 
= \Proj^1 \;;\; |z| \ge 1\}$ 
and $\tbF_\tau$ 
is the space in Remark \ref{rem:Cinfty}.

\begin{proposition}
\label{prop:purity_condition} 
A graded \seminf VHS $\cF$ with a real structure 
is pure at $\tau\in \cM$ if and only if 
one of the following natural maps is an isomorphism: 
\begin{align} 
\label{eq:FcapkappaF}
\F_\tau \cap \kappa_{\cH}(\F_\tau) &\longrightarrow   \F_\tau/z\F_\tau, \\
\label{eq:FcapHreal} 
(\F_\tau \cap \cH_\R)\otimes \C &\longrightarrow  \F_\tau/z\F_\tau, \\ 
\label{eq:FpluskappaF} 
\F_\tau \oplus z^{-1} \kappa_{\cH}(\F_\tau) & \longrightarrow  \cH. 
\end{align} 
This holds also true when $\F_\tau$, $\cH$, $\cH_\R$ are replaced with 
$\tbF_\tau$, $\tcH$, $\tcH_\R$ in Remark \ref{rem:Cinfty}. 
When $\cF$ is pure at some $\tau$, 
$\cH_\R$ is a free module over $C^h(S^1,\R)$.  
\end{proposition}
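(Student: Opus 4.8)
The plan is to recast the three conditions in terms of the holomorphic vector bundle $K:=\hK|_{\{\tau\}\times\Proj^1}$ on $\Proj^1$ and then invoke Grothendieck's classification. By the discussion preceding the proposition, when $\cH$ is identified with the space of holomorphic sections of $K$ over $\Proj^1\setminus\{0,\infty\}$ we have $\F_\tau=H^0(\Proj^1\setminus\{\infty\},K)$ and $\kappa_\cH(\F_\tau)=H^0(\Proj^1\setminus\{0\},K)$, so that $z^{-1}\kappa_\cH(\F_\tau)$ is the space of sections of $K$ over $\Proj^1\setminus\{0\}$ vanishing at $\infty$; consequently $\F_\tau\cap\kappa_\cH(\F_\tau)=H^0(\Proj^1,K)$ maps to the fibre $\F_\tau/z\F_\tau=K|_{z=0}$ by evaluation at $z=0$. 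Writing $K\cong\bigoplus_{i=1}^N\cO_{\Proj^1}(a_i)$ with $a_1\ge\dots\ge a_N$ (Grothendieck), purity at $\tau$ means by definition $a_1=\dots=a_N=0$, and since all three maps respect this direct-sum decomposition it suffices to prove, for $K=\cO_{\Proj^1}(a)$, that each of the three maps is an isomorphism if and only if $a=0$.

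This last point is a short computation in the two standard affine charts. In the trivialisation over $\Proj^1\setminus\{\infty\}$ with coordinate $z$ one has $\F_\tau=\cO(\C)$, $\kappa_\cH(\F_\tau)=\{\textstyle\sum_{k\le a}c_kz^k\}$ and $z^{-1}\kappa_\cH(\F_\tau)=\{\textstyle\sum_{k\le a-1}c_kz^k\}$, whence (i) the evaluation map $H^0(\Proj^1,\cO(a))=\{\textstyle\sum_{0\le k\le a}c_kz^k\}\to\cO(\C)/z\cO(\C)$, $\textstyle\sum c_kz^k\mapsto c_0$, is surjective iff $a\ge 0$ and injective iff $a\le 0$, hence an isomorphism iff $a=0$; and (ii) $\cO(\C)\oplus z^{-1}\kappa_\cH(\F_\tau)\to\cO(\C^*)$, $(s,t)\mapsto s+t$, fails to be injective when $a>0$ and fails to be surjective when $a<0$, and is an isomorphism (the classical Birkhoff decomposition) iff $a=0$. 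Summing over $i$, the maps \eqref{eq:FcapkappaF} and \eqref{eq:FpluskappaF} are isomorphisms precisely when every $a_i$ vanishes, i.e. precisely when $\cF$ is pure at $\tau$. The version with $\tbF_\tau,\tcH,\tcH_\R$ follows by the same argument with $C^\infty(S^1,\C)$-valued Fourier series in place of Laurent series and with the discs $\D_0,\D_\infty$ in place of the points $0,\infty$, the bundle $\hK|_{\{\tau\}\times\Proj^1}$ being the same; alternatively one uses Remark \ref{rem:Cinfty} to recover $\F_\tau$ from $\tbF_\tau$ via $\hatnabla_{z\partial_z}$, so the two pictures have the same splitting type.

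It remains to treat \eqref{eq:FcapHreal}. The subspace $W:=\F_\tau\cap\kappa_\cH(\F_\tau)$ is a finite-dimensional $\C$-subspace of $\cH$ stable under the antiholomorphic involution $\kappa_\cH$ (since $\kappa_\cH(W)=\kappa_\cH(\F_\tau)\cap\F_\tau=W$), hence $W=(W\cap\cH_\R)\otimes_\R\C$; moreover if $s\in\F_\tau\cap\cH_\R$ then $s=\kappa_\cH(s)\in\kappa_\cH(\F_\tau)$, so $\F_\tau\cap\cH_\R=W\cap\cH_\R$. Therefore \eqref{eq:FcapHreal} is simply \eqref{eq:FcapkappaF} tensored with $\C$, and the two conditions are equivalent; this proves the asserted equivalences, both in the holomorphic and in the $C^\infty$ setting.

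For the last assertion, assume $\cF$ is pure at some $\tau_0$. Then $H^0(\Proj^1,\hK|_{\{\tau_0\}\times\Proj^1})=\F_{\tau_0}\cap\kappa_\cH(\F_{\tau_0})=(\F_{\tau_0}\cap\cH_\R)\otimes_\R\C$, and since $\hK|_{\{\tau_0\}\times\Proj^1}$ is trivial of rank $N$ we may choose $v_1,\dots,v_N\in\F_{\tau_0}\cap\cH_\R$ forming a basis of this space, hence a global holomorphic frame; restricting the frame to the equator, $v_1,\dots,v_N$ is an $\cO(\C^*)$-basis of $\cH$ lying in $\cH_\R$. Writing $s\in\cH$ uniquely as $s=\sum_i f_iv_i$ with $f_i\in\cO(\C^*)$ and applying $\kappa_\cH$, which fixes each $v_i$, shows $s\in\cH_\R$ iff $\kappa(f_i)=f_i$ for all $i$, i.e. iff $f_i\in C^h(S^1,\R)$; hence $\cH_\R=\bigoplus_i C^h(S^1,\R)\,v_i$ is free of rank $N$. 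The step I expect to be the main obstacle is the first one — setting up the dictionary between the three conditions and the bundle data on $\Proj^1$, in particular getting the $z^{\pm 1}$ twists right — after which everything reduces to Grothendieck's theorem and the single-chart calculation above.
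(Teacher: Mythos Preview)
Your proof is correct and follows the same overall strategy as the paper: translate each condition into a statement about the holomorphic bundle $\hK|_{\{\tau\}\times\Proj^1}$, identify $\F_\tau\cap\kappa_\cH(\F_\tau)$ with $H^0(\Proj^1,\hK)$ and the map \eqref{eq:FcapkappaF} with evaluation at $z=0$, and reduce \eqref{eq:FcapHreal} to \eqref{eq:FcapkappaF} via $\kappa_\cH$-invariance of $W=\F_\tau\cap\kappa_\cH(\F_\tau)$. The treatment of the freeness of $\cH_\R$ is identical.

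The one genuine difference in execution is how \eqref{eq:FpluskappaF} is linked to purity. You invoke Grothendieck's splitting uniformly and check both \eqref{eq:FcapkappaF} and \eqref{eq:FpluskappaF} on each line-bundle summand. The paper instead argues ``pure $\Rightarrow$ \eqref{eq:FpluskappaF}'' by the bundle picture, and then closes the loop ``\eqref{eq:FpluskappaF} $\Rightarrow$ \eqref{eq:FcapkappaF}'' by a short direct argument: given $v\in\F_\tau$, write $z^{-1}v=v_1+v_2$ with $v_1\in\F_\tau$, $v_2\in z^{-1}\kappa_\cH(\F_\tau)$, so that $v-zv_1=zv_2\in\F_\tau\cap\kappa_\cH(\F_\tau)$ hits $[v]$. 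Your approach is more systematic; the paper's avoids re-running Grothendieck for \eqref{eq:FpluskappaF}. One small wording slip: your claim that ``all three maps respect this direct-sum decomposition'' is not literally true for \eqref{eq:FcapHreal}, since $\cH_\R$ need not split along a Grothendieck decomposition, but this is harmless because you handle \eqref{eq:FcapHreal} separately by reducing it to \eqref{eq:FcapkappaF}.
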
 
\begin{proof}
Under the identifications we explained above, 
$\F_\tau\cap \kappa_{\cH}(\F_\tau)$ is 
identified with the space of global sections of 
$\hK|_{\{\tau \}\times \Proj^1}$ 
and the natural map 
$\F_\tau \cap \kappa_{\cH}(\F_\tau) \to \F_\tau/z\F_\tau$ 
corresponds to the restriction to $z=0$ 
(note that $\F_\tau/z\F_\tau \cong \hK_{(\tau,0)}$). 
Therefore (\ref{eq:FcapkappaF}) is an isomorphism 
if and only if $K|_{\{\tau \}\times \Proj^1}$ is trivial. 
$\F_\tau\cap \kappa_\cH(\F_\tau)$ 
is invariant under $\kappa_{\cH}$ 
and its real form is given by $\F_\tau\cap \cH_\R$. 
Therefore, we have 
$\F_\tau\cap \kappa_{\cH}(\F_\tau) 
\cong (\F_\tau\cap \cH_\R)\otimes \C$. 
Thus (\ref{eq:FcapkappaF}) is an isomorphism if and only if 
so is (\ref{eq:FcapHreal}). 
Similarly, we can see that 
(\ref{eq:FpluskappaF}) is an isomorphism 
if $\hK|_{\{\tau \}\times \Proj^1}$ is trivial. 
Conversely, we show that (\ref{eq:FcapkappaF}) is an isomorphism 
if so is (\ref{eq:FpluskappaF}).   
The injectivity of the map 
$\F_\tau \cap \kappa_{\cH}(\F_\tau) \to \F_\tau/z\F_\tau$ 
is easy to check. 
Take $v\in \F_\tau$. By assumption, $z^{-1}v= v_1 + v_2$ for some 
$v_1 \in \F_\tau$ and $v_2 \in z^{-1}\kappa_{\cH}(\F_\tau)$. 
Thus $v-zv_1 = z v_2 \in \F_\tau\cap \kappa_{\cH}(\F_\tau)$ and 
the image of this element in $\F_\tau/z\F_\tau$ is $[v]$.  
The discussion on the spaces 
$\tbF_\tau$, $\tcH$ and $\tcH_\R$ are similar.

The last statement: Since $\F_\tau\cap \kappa_{\cH}(\F_\tau) \cong 
(\F_\tau \cap \cH_\R)\otimes \C$, we can take a global basis 
of the trivial bundle $\hK|_{\{\tau \}\times \Proj^1}$ 
from $\F_\tau\cap \cH_\R$. The module $\cH_\R$ is freely 
generated by such a basis over $C^h(S^1,\R)$. 
\end{proof}

\begin{definition} 
\label{def:polarized} 
A graded \seminf VHS $\cF$ 
with a real structure is 
called \emph{polarized} at $\tau\in \cM$ 
if the Hermitian pairing $h$ on 
$\F_\tau \cap \kappa_{\cH}(\F_\tau) 
\cong \Gamma(\Proj^1,\hK|_{\{\tau\}\times \Proj^1})$ 
defined by 
\[
h\colon s_1 \times s_2 \longmapsto  (\kappa_{\cH}(s_1), s_2)_{\cH} 
\]
is positive definite. Note that this pairing 
takes values in $\C$ since 
$(\F_\tau,\F_\tau)_{\cH}\subset \cO(\C)$ and  
$(\kappa_{\cH}(\F_\tau),\kappa_\cH(\F_\tau))_{\cH} 
\subset \cO(\Proj^1\setminus\{0\})$ by (\ref{eq:property_kappaH}). 
It is easy to show that a polarized \seminf VHS is 
necessarily pure at the same point. \qed
\end{definition}

\begin{remark} 
\label{rem:Birkhoff_Iwasawa} 
In order to obtain a basis of 
$\tbF_\tau\cap \kappa_{\cH}(\tbF_\tau)$ 
or $\tbF_\tau \cap \tcH_\R$,  
we can make use of Birkhoff or Iwasawa factorization.  
Take an $\cO(\D_0)$-basis $s_1,\dots,s_N$ of $\tbF_\tau$. 
Define an element $A(z)=(A_{ij}(z))$ 
of the loop group $LGL_N(\C)$ by 
\[
[\kappa_\cH(s_1),\dots,\kappa_\cH(s_N)] = [s_1,\dots,s_N] A(z), 
\quad \text{\emph{i.e.} } \kappa_\cH(s_i) = \sum_{j} s_j A_{ji}(z). 
\]
If $A(z)$ admits the Birkhoff factorization 
$A(z) = B(z) C(z)$, 
where $B(z)$ and $C(z)$ are holomorphic maps 
$B(z)\colon \D_0 \to GL_N(\C)$,  
$C(z)\colon \D_{\infty} \to GL_N(\C)$ such that $B(0)=\unit$, 
then we obtain a $\C$-basis of 
$\tbF_\tau \cap \kappa_{\cH}(\tbF_\tau)$ as 
\begin{equation}
\label{eq:A=BC}
[\kappa_\cH(s_1),\dots,\kappa_\cH(s_N)] C(z)^{-1} 
= [s_1,\dots,s_N] B(z).   
\end{equation} 
Here, $\cF$ is pure at $\tau\in \cM$ if and only if 
$A(z)$ admits the Birkhoff factorization, 
\emph{i.e.} $A(z)$ is in the ``big cell" 
of the loop group. 
In particular, the purity is 
an open condition for $\tau \in \cM$. 
On the other hand, the Iwasawa-type 
factorization appears as follows. 
Assume that we have a basis $e_1,\dots,e_N$ 
of $\tcH_\R$ over $C^\infty(S^1,\R)$ such that 
$(e_i,e_j)_{\tcH} =\delta_{ij}$ 
and a basis $s_1,\dots,s_N$ of $\tbF_\tau$ over $\cO(\D_0)$ 
such that $(s_i,s_j)_{\tcH}= \delta_{ij}$. 
Define a matrix $J(z)$ by 
\[
[s_1,\dots, s_N] = [e_1,\dots,e_N] J(z).  
\]
This $J(z)$ lies in the {\it twisted loop group} 
$LGL_N(\C)_{\rm tw}$: 
\[
LGL_N(\C)_{\rm tw} := 
\{J \colon S^1 \to GL_N(\C)\;;\;
J(-z)^{\rm T} J(z) = \unit \}.   
\]
If $J(z)$ admits an 
Iwasawa-type factorization $J(z)=U(z)B(z)$, where 
$U\colon S^1 \to GL_N(\R)$ with 
$U(-z)^{\rm T} U(z) = \unit$ and  
$B\colon \D_0 \to GL_N(\C)$ with $B(-z)^{\rm T} B(z) =\unit$, 
then we obtain an $\R$-basis of $\tbF_\tau \cap \tcH_\R$ as 
\[
[s_1,\dots,s_N] B(z)^{-1} = 
[e_1,\dots,e_N] U(z)    
\]
which is orthonormal with respect to $(\cdot,\cdot)_{\tcH}$. 
In this case, the pairing $(\cdot,\cdot)_{\tcH}$ 
restricted to $\tbF_\tau \cap \tcH_\R$ is an 
$\R$-valued \emph{positive definite}  
symmetric form. 
The map $\tau \mapsto J(z)$ gives rise to 
the semi-infinite period map in 
Section \ref{subsec:semi-inf_period}: 
\[
\cM \ni \tau \longmapsto [J(z)] \in LGL_N(\C)_{\rm tw}/ 
LGL^+_N(\C)_{\rm tw}.  
\]
Here, $\cF$ is pure at $\tau$ 
and $(\tbF_\tau \cap \tcH_\R, (\cdot,\cdot)_{\tcH})$ is 
positive definite if and only if the image of this map 
lies in the $LGL_N(\R)_{\rm tw}$-orbit of $[\unit]$. 
This orbit is open, but not dense.   
We owe the Lie group theoretic viewpoint here 
to Guest \cite{guest_durham, guest-qc_int}. 
\end{remark}

\begin{remark} 
In addition to the purity and the polarization, 
Hertling-Sevenheck \cite{hertling-sevenheck} 
and Katzarkov-Kontsevich-Pantev \cite{KKP} 
considered the compatibility of a real (or rational) 
structure and the Stokes structure. 
\end{remark}

\subsection{Cecotti-Vafa structure} 
We describe the Cecotti-Vafa structure ($tt^*$-geometry) 
associated to a pure graded \seminf VHS with a real structure. 

Define a complex vector bundle $K\to\cM$ by 
$K:= \hK|_{\cM\times \{0\}}$. 
This is the real analytic vector bundle 
underlying $\cF/z\cF \cong \cRz|_{\cM\times \{0\}}$. 
Let $\cA_{\cM}^p$ be the sheaf of complex-valued 
$C^\infty$ $p$-forms on $\cM$ and 
$\cA_{\cM}^1 = \cA_{\cM}^{1,0} \oplus \cA_{\cM}^{0,1}$ 
be the type decomposition.

\begin{proposition}[{\cite[Theorem 2.19]{hertling-tt*}}]
\label{prop:CV-str} 
Assume that a graded \seminf VHS $\cF$ with a real structure 
is pure over $\cM$. 
Then the vector bundle $K$ is equipped 
with a Cecotti-Vafa structure 
$(\kappa, g, C,\tC, D, \cQ, \cU,\ov\cU)$. 
This is given by the data (see (\ref{eq:defofkappa}), 
(\ref{eq:defofg}), (\ref{eq:defofCCD}), (\ref{eq:defofUUQ})):  
\begin{itemize} 
\item A complex-antilinear involution $\kappa \colon K_\tau \to K_\tau$; 
\item A non-degenerate, symmetric, $\C$-bilinear metric 
$g\colon K_\tau \times K_\tau \to \C$ 
which is real with respect to $\kappa$, i.e. 
$g(\kappa u_1, \kappa u_2) = \ov{g(u_1,u_2)}$;   
\item Endomorphisms $C \in \End(K)\otimes \cA^{1,0}_\cM$, 
$\tC\in \End(K)\otimes \cA^{0,1}_\cM$ such that 
$\tC_{\oi} = \kappa C_i \kappa$;  
\item A connection $D\colon K\to K\otimes \cA^1_\cM$ 
real with respect to $\kappa$, i.e.
$D_{\oi} = \kappa D_i \kappa$; 
\item Endomorphisms $\cQ,\cU,\ov\cU\in \End(K)$ 
such that $\cU = C_E$, $\ov\cU=\kappa \cU \kappa = \tC_{\ov{E}}$ 
and $\cQ \kappa = -\kappa \cQ$   
\end{itemize}  
satisfying the integrability conditions 
\begin{align*}
&[D_i,D_j]=0, \quad D_iC_j - D_j C_i=0, \quad [C_i,C_j]=0, \\ 
&[D_\oi,D_\oj]=0, \quad  D_\oi \tC_\oj - D_\oj \tC_\oi=0, \quad 
[\tC_\oi, \tC_\oj]=0, \\
& D_i \tC_\oj =0, \quad D_\oi C_j =0,\quad  
[D_i,D_\oj] + [C_i,\tC_\oj]=0, \\ 
&D_i\ov\cU=0, \quad D_i \cQ - [\ov\cU, C_i]=0, 
\quad D_i\cU - C_i + [\cQ,C_i]=0, \quad [\cU_,C_i]=0, \\  
&D_\oi\cU=0, \quad D_\oi \cQ + [\cU, \tC_\oi]=0, \quad  
D_\oi\ov\cU - \tC_\oi - [\cQ,\tC_\oi]=0, \quad [\ov\cU,\tC_\oi]=0, 
\end{align*} 
and the compatibility with the metric 
\begin{align*}
&\partial_i g(u_1,u_2) = g(D_iu_1,u_2)+g(u_1,D_iu_2), \\
&\partial_\oi g(u_1,u_2) = g(D_\oi u_1, u_2) + g(u_1, D_\oi u_2), \\
& g(C_i u_1, u_2) = g(u_1, C_iu_2),\quad 
g(\tC_\oi u_1, u_2) = g(u_1, \tC_\oi u_2), \\  
& g(\cU u_1, u_2) = g(u_1, \cU u_2), \quad 
g(\ov\cU u_1, u_2) = g(u_1, \ov\cU u_2), \\ 
& g(\cQ u_1, u_2)+g(u_1,\cQ u_2)=0.   
\end{align*} 
Here we chose a local complex co-ordinate system $\{t^i\}$
on $\cM$ and used the notation 
$D_i = D_{\partial/\partial t^i}$, 
$D_\oi = D_{\partial/\partial \ov{t^i}}$, etc. 
The Hermitian metric $h$ 
in Definition \ref{def:polarized} 
is related to $g$ by  
\[
h(u_1,u_2) = g(\kappa(u_1),u_2).  
\]
\end{proposition}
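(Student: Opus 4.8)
The plan is to follow the proof of \cite[Theorem 2.19]{hertling-tt*}, organised around the purity trivialisation of the $\Proj^1$-bundle $\hK$. I would first manufacture the data $(\kappa,g,C,\tC,D,\cQ,\cU,\ov\cU)$ by restricting the pairing $(\cdot,\cdot)_{\cRz}$ to $z=0$ and by expanding the meromorphic flat connection $\hatnabla$ in powers of $z$ in a frame of $\hK$ built from global $\Proj^1$-sections, and then read off all the $tt^*$-equations from $\hatnabla^2=0$, from $\hatnabla$-flatness of $(\cdot,\cdot)_{\cRz}$, and from the grading axioms.

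\emph{The tensorial data.} Purity means $\hK|_{\{\tau\}\times\Proj^1}$ is holomorphically trivial, so every evaluation map $\Gamma(\Proj^1,\hK|_{\{\tau\}\times\Proj^1})\to\hK_{(\tau,z)}$ is an isomorphism; in particular, restriction to $z=0$ gives $K_\tau\cong\Gamma(\Proj^1,\hK|_{\{\tau\}\times\Proj^1})\cong\F_\tau\cap\kappa_\cH(\F_\tau)$, the second identification being the one from Proposition \ref{prop:purity_condition}. Transporting $\kappa_\cH$, which preserves $\F_\tau\cap\kappa_\cH(\F_\tau)$, along this chain defines the complex-antilinear involution $\kappa$ on $K$, whose fixed locus is the image of $\F_\tau\cap\cH_\R$. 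Restricting $(\cdot,\cdot)_{\cRz}$ to $z=0$, where the map $(-)$ is the identity, defines $g$: it is $\C$-bilinear, symmetric by the $z\mapsto-z$ symmetry axiom, and nondegenerate by perfectness; the reality $g(\kappa u_1,\kappa u_2)=\ov{g(u_1,u_2)}$ follows from $\hatnabla$-flatness of $(\cdot,\cdot)_{\cRz}$, reality of $(\cdot,\cdot)_R$ on $R_\R$, and the definition (\ref{eq:gluingmap}) of the gluing map, evaluated on global sections. The stated formula $h(u_1,u_2)=g(\kappa(u_1),u_2)$ is then immediate, because on $\F_\tau\cap\kappa_\cH(\F_\tau)$ the function $(\kappa_\cH(s_1),s_2)_\cH$ lies in $\cO(\C)\cap\cO(\Proj^1\setminus\{0\})=\C$ and hence coincides with its value at $z=0$.

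\emph{The operators.} Over a chart $U\subset\cM$ with holomorphic coordinates $\{t^i\}$, fix a real-analytic frame of $K$ and extend it by the purity trivialisation to a frame of $\hK$ over $U\times\Proj^1$. In this frame $\hatnabla$ has a pole of Poincar\'e rank $1$ at $z=0$ and, since $\hK$ is glued from $\cRz$ and $\ov{\gamma^*\cRz}$ via $\kappa\circ P(\gamma(z),z)$, a $\gamma$-conjugate pole of Poincar\'e rank $1$ at $z=\infty$; accordingly it takes the shape $\hatnabla_{\partial_i}=\partial_i+z^{-1}C_i+D_i$ and $\hatnabla_{\partial_\oi}=\partial_\oi+z\tC_\oi+D_\oi$, while $\hatnabla_{z\partial_z}$ equals $z\partial_z$ plus a term with simple poles at $z=0$ and $z=\infty$ whose principal parts define $\cU$ and $\ov\cU$ and whose $z^0$-part defines $-\cQ$. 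This produces $C\in\End(K)\otimes\cA^{1,0}_\cM$, $\tC\in\End(K)\otimes\cA^{0,1}_\cM$, the connection $D=D^{1,0}+D^{0,1}$ and $\cU,\cQ,\ov\cU\in\End(K)$. The relations $\tC_\oi=\kappa C_i\kappa$, $D_\oi=\kappa D_i\kappa$ and $\ov\cU=\kappa\cU\kappa$, $\cQ\kappa=-\kappa\cQ$ are forced by the description of $\hK$ near $z=\infty$ as $\ov{\gamma^*\cRz}$ carrying the connection $\ov{\gamma^*\hatnabla}$, using $\gamma^*(dz/z)=-\ov{dz/z}$ and the fact that $\gamma$ interchanges the two poles; $\cU=C_E$ and $\ov\cU=\tC_{\ov E}$ come directly from (\ref{eq:hatnabla}), and the $g$-antisymmetry of $\cQ$ comes from the third grading axiom. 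That the whole construction is independent of the auxiliary choices (frame, trivialisation) follows from functoriality of each step.

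\emph{The equations, and the main difficulty.} Collecting powers of $z$ in $\hatnabla^2=0$ over $\cM\times\C^*$ reproduces exactly the listed integrability relations: the negative powers give the relations among $C$, $D$, $\cU$, $\cQ$ (for instance $[C_i,C_j]=0$, $D_iC_j-D_jC_i=0$, $[\cU,C_i]=0$, $D_i\cU-C_i+[\cQ,C_i]=0$, $D_i\ov\cU=0$, $D_\oi C_j=0$), the $z^0$-term gives the curvature relations $[D_i,D_j]=0$, $[D_i,D_\oj]+[C_i,\tC_\oj]=0$ and the $\cQ$-relations $D_i\cQ-[\ov\cU,C_i]=0$, $D_\oi\cQ+[\cU,\tC_\oi]=0$, and the positive powers give the conjugate relations among $\tC$, $D$, $\ov\cU$. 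Similarly, expanding $\hatnabla$-flatness of $(\cdot,\cdot)_{\cRz}$ in this frame and separating powers of $z$ gives the metric-compatibility relations: the $z^0$-coefficient yields $\partial_i g(u_1,u_2)=g(D_iu_1,u_2)+g(u_1,D_iu_2)$ and $g(\cQ u_1,u_2)+g(u_1,\cQ u_2)=0$, the $z^{-1}$-coefficient yields $g(C_iu_1,u_2)=g(u_1,C_iu_2)$ and $g(\cU u_1,u_2)=g(u_1,\cU u_2)$, and the conjugate powers give the $\tC$- and $\ov\cU$-statements. The genuine work, and the step I expect to be the main obstacle, is the analysis at $z=\infty$: one must verify that in the purity frame the $d\ov{t^i}$-component of $\hatnabla$ really has the form $\partial_\oi+z\tC_\oi+D_\oi$ with precisely $\tC_\oi=\kappa C_i\kappa$, and that the $dz/z$-component has the claimed shape with the correct reality of $\cU$, $\ov\cU$ and $\cQ$. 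This requires unwinding the composition $\kappa\circ P(\gamma(z),z)$ of (\ref{eq:gluingmap}) and tracking how it conjugates the Poincar\'e rank $1$ pole at $z=0$ onto the pole at $z=\infty$, keeping straight the operations $\gamma^*$, complex conjugation and parallel transport; once this is pinned down, the rest is the routine bookkeeping of matching $z$-coefficients, organised as above.
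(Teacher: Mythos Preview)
Your proposal is correct and follows essentially the same route as the paper's proof: identify $K_\tau$ with $\F_\tau\cap\kappa_\cH(\F_\tau)$ via purity, define $\kappa$ and $g$ by restricting $\kappa_\cH$ and $(\cdot,\cdot)_\cH$, expand $\hatnabla$ in powers of $z$ on global sections to produce $C,\tC,D,\cU,\ov\cU,\cQ$, and then read off the $tt^*$-equations and metric compatibilities from $\hatnabla^2=0$ and $\hatnabla$-flatness of the pairing. One organisational remark: the step you flag as the main obstacle, namely the shape of the connection at $z=\infty$ and the reality relations $\tC_\oi=\kappa C_i\kappa$, $D_\oi=\kappa D_i\kappa$, $\ov\cU=\kappa\cU\kappa$, $\cQ\kappa=-\kappa\cQ$, is handled in the paper more directly than by unwinding the gluing map (\ref{eq:gluingmap}): one first observes the $C^\infty$ Griffiths transversality $X^{(1,0)}\kappa_\cH(\F_\tau)\subset\kappa_\cH(\F_\tau)$ and $X^{(0,1)}\kappa_\cH(\F_\tau)\subset z\kappa_\cH(\F_\tau)$ (immediate from applying $\kappa_\cH$ to the holomorphic Griffiths transversality), which forces the finite $z$-expansion $X\Phi_\tau(u)=z^{-1}\Phi_\tau(C_Xu)+\Phi_\tau(D_Xu)+z\Phi_\tau(\tC_Xu)$, and then simply applies $\kappa_\cH$ to both sides of this identity (using $\kappa_\cH(z\alpha)=z^{-1}\kappa_\cH(\alpha)$) to read off the reality relations at once; the $\cQ$, $\cU$, $\ov\cU$ relations follow identically from $\kappa_\cH\hatnabla_{z\partial_z}=-\hatnabla_{z\partial_z}\kappa_\cH$.
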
 

A concrete example of the Cecotti-Vafa structure 
will be given in Section \ref{sec:exampleP1tt*}. 
We explain the construction of the above data 
from the \seminf VHS $\cF$. 
Because $\cF$ is pure, we have a canonical identification 
\[
\Phi_\tau \colon K_\tau \cong \Gamma(\Proj^1,\hK|_{\{\tau\}\times \Proj^1}) 
\cong \F_\tau \cap \kappa_{\cH}(\F_\tau).  
\]
The involution $\kappa_\cH$ and the pairing $(\cdot,\cdot)_\cH$ 
restricted to $\F_\tau \cap \kappa_{\cH}(\F_\tau)$ induce 
an involution $\kappa$ and a $\C$-bilinear pairing 
$g$ on $K_\tau$: 
\begin{align}
\label{eq:defofkappa}
\Phi_\tau(\kappa(u)) & := \kappa_\cH(\Phi_\tau(u)), \\
\label{eq:defofg} 
g(u_1,u_2) & := (\Phi_\tau(u_1),\Phi_\tau(u_2))_{\cH} 
\in \C 
\end{align} 
satisfying 
\[
g(\kappa u_1, \kappa u_2 ) = \ov{g(u_1,u_2)}, 
\quad g(u_1,u_2) = g(u_2,u_1).  
\]
Note that the subspace $\F_\tau\cap \kappa_{\cH}(\F_\tau)$ 
depends on the parameter $\tau$ real analytically.  
A $C^\infty$-version of the 
Griffiths transversality gives 
\begin{align*} 
&X^{(1,0)} \F_\tau \subset z^{-1} \F_\tau, 
& & X^{(0,1)} \F_\tau \subset \F_\tau, \\
&X^{(1,0)} \kappa_\cH(\F_\tau) \subset \kappa_\cH(\F_\tau), & 
&X^{(0,1)} \kappa_\cH(\F_\tau) \subset z \kappa_\cH(\F_\tau),   
\end{align*} 
where $X^{(1,0)}\in T^{1,0}_\tau\cM$ and 
$X^{(0,1)}\in T^{0,1}_\tau \cM$.  
For $X^{(1,0)}\in T_\tau^{1,0}\cM$, we have 
\[
X^{(1,0)}(\F_\tau \cap \kappa_\cH(\F_\tau) )
\subset z^{-1} \F_\tau \cap \kappa_{\cH}(\F_\tau) =  
z^{-1}(\F_\tau \cap \kappa_\cH(\F_\tau)) \oplus 
(\F_\tau \cap \kappa_\cH(\F_\tau)). 
\] 
Similarly for $X^{(0,1)}\in T_\tau^{(0,1)}\cM$, we have  
\begin{align*}
X^{(0,1)} (\F_\tau \cap \kappa_\cH(\F_\tau)) 
\subset (\F_\tau \cap \kappa_\cH(\F_\tau)) \oplus 
z (\F_\tau \cap \kappa_\cH(\F_\tau)). 
\end{align*} 
Hence we can define endomorphisms 
$C\colon K \to K\otimes \cA^{1,0}$, 
$\tC\colon K \to K\otimes \cA^{0,1}$,  
and a connection $D \colon K\to K\otimes \cA^1$ by 
\begin{equation}
\label{eq:defofCCD}
X\Phi_\tau(u_\tau) 
= z^{-1} \Phi_\tau(C_X(u_\tau)) + \Phi_\tau(D_X(u_\tau)) 
+ z \Phi_\tau(\tC_X(u_\tau))
\end{equation} 
for a section $u_\tau$ of $K$. 
By applying $\kappa_\cH$ on the both hand sides, 
\[
\ov{X} \Phi_\tau(\kappa u_\tau) 
= z^{-1} \Phi_\tau ( \kappa \tC_X(u_\tau))
+ \Phi_\tau(\kappa D_X(u_\tau)) + 
z \Phi_\tau (\kappa C_X(u_\tau)). 
\]
Therefore, we must have 
\[
C_{\ov{X}} \kappa = \kappa  \tC_X, \quad  
\kappa   D_X = D_{\ov{X}}  \kappa, \quad 
X\in T\cM \otimes_\R \C.   
\] 
Similarly, we can define endomorphisms 
$\cU,\ov{\cU},\cQ \colon K\to K$ by 
\begin{align}
\label{eq:defofUUQ} 
\hatnabla_{z\partial_z} \Phi_\tau(u_\tau) 
= -z^{-1} \Phi_\tau(\cU(u_\tau)) +  
\Phi_\tau(\cQ(u_\tau)) + z \Phi_\tau(\ov{\cU}(u_\tau)). 
\end{align} 
Because $\hatnabla_{z\partial_z}$ is purely imaginary 
(\ref{eq:property_kappaH}), we have 
\[ 
\kappa \cQ = -\cQ  \kappa, \quad 
\ov{\cU}=\kappa \cU \kappa.
\] 
By $(\hatnabla_{z\partial_z} + E)\F_\tau \subset \F_\tau$
in Proposition \ref{prop:property_VHS}, we find 
\[
\cU = C_E, \quad \ov\cU = \tC_{\ov{E}}. 
\]
We have a canonical isomorphism 
\[
\pi^* K \cong \hK, \quad \text{where } \ 
\pi\colon \cM\times \Proj^1 \to \cM.  
\]
Let $C^{\infty h}(\pi^*K)$ be the sheaf of $C^\infty$ 
sections of $\pi^*K\cong \hK$ which are holomorphic 
on each fiber $\{\tau\}\times \Proj^1$. 
Under the isomorphism above, the flat connection 
$\hatnabla$ on $\cRz = \hK|_{\cM\times \C}$ 
can be written in the form:   
\begin{align}
\nonumber 
\hatnabla  \colon C^{\infty h}(\pi^*K) \longrightarrow 
C^{\infty h}(\pi^*K)\otimes
\Bigl( & z^{-1}\cA^{1,0}_{\cM} \oplus \cA^{1}_{\cM}
\oplus z \cA^{0,1}_{\cM}   \\ 
\nonumber 
 & \oplus (z^{-1}\cA^0_{\cM} \oplus \cA^0_{\cM} \oplus z\cA^0_{\cM})
\frac{dz}{z} \Bigr) \\
\label{eq:hatnabla_K} 
\hatnabla = z^{-1}C + D + z \tC  
 \,+\, &(z\partial_z -z^{-1}\cU + \cQ + z\ov{\cU})\otimes \frac{dz}{z}.   
\end{align} 
Under the same isomorphism, 
the pairing $(\cdot,\cdot)_{\cRz}$ on $\cRz= \hK|_{\cM\times \C}$ 
can be written as  
\begin{align*}
C^{\infty h}((-)^*(\pi^*K)) 
\otimes 
C^{\infty h}(\pi^*K) & \to C^{\infty h}(\cM\times \Proj^1) \\
s_1(\tau,-z) \otimes   s_2(\tau,z) & \longmapsto 
g(s_1(\tau,-z),s_2(\tau,z)).  
\end{align*} 
Unpacking the flatness of $\hatnabla$ and 
$\hatnabla$-flatness of the pairing in terms of 
$C,\tC,D,\cU,\cQ$ and $g$,  
we arrive at the equations in Proposition \ref{prop:CV-str}.

\begin{remark}
(i) The $(0,1)$-part $\hatnabla_{\oi} = D_\oi + z \tC_\oi$ 
of the flat connection (\ref{eq:hatnabla_K}) 
gives the holomorphic structure on 
$\hK|_{\cM\times \{z\}}$ 
which corresponds to the holomorphic structure on $\cRz$. 
In particular, $D$ is identified with  
the canonical connection associated to the Hermitian metric $h$ 
on the holomorphic vector bundle $\cF/z\cF$. 
Similarly, the $(1,0)$-part $D_i + z^{-1} C_i$ 
gives an anti-holomorphic structure 
on $\hK|_{\cM\times \{z\}}$  
which corresponds to the anti-holomorphic structure on 
$\ov{\gamma^* \cRz}$. 

(ii) Among the data of the Cecotti-Vafa structure, 
one can define the data ($C$, $D_E +\cQ$, $\cU$, $g$) 
without choosing a real structure. 
In fact, $C_X$ is given by the map 
$\cF/z\cF \ni [s] \mapsto  [z\nabla_X s] \in \cF/z\cF$, 
$D_E + \cQ$ is given by the map 
$\cF/z\cF \ni [s] \mapsto [\frac{1}{2}(\Grading - n) s] 
\in \cF/z\cF$, 
$\cU = C_E$, and $g$ is given by 
$g([s_1],[s_2]) = (s_1,s_2)_{\cF}|_{z=0}$ for $s_i\in \cF$.  
In the case of quantum cohomology, 
$C_i$ is the quantum multiplication 
$\phi_i\circ$ by some $\phi_i\in H^*_{\rm orb}(\cX)$
(see (\ref{eq:quantumproduct_divisor}), 
(\ref{eq:Dubrovinconn})) and 
$g$ is the Poincar\'{e} pairing. 
\end{remark}

\begin{remark} 
A Frobenius manifold structure \cite{dubrovin-2D} on $\cM$ 
arises from a miniversal \seminf VHS
(in the sense of \cite[Definition 2.8]{CIT:I}) 
without a real structure. 
To obtain a Frobenius manifold structure, 
we need a choice of an opposite subspace 
$\cH_-\subset \cH$: 
a sub free $\cO(\Proj^1\setminus\{0\})$-module $\cH_-$ of $\cH$ 
satisfying  
\[
\cH = \F_\tau \oplus \cH_-, \quad 
\hatnabla_{z\partial_z}\cH_- \subset \cH_-.  
\] 
The choice of $\cH_-$ corresponds to giving 
a logarithmic extension of the flat vector bundle 
$(\sfH,\hatnabla_{z\partial_z})$ at $z=\infty$. 
A graded \seminf VHS with the choice of an opposite subspace 
corresponds to the (trTLEP)-structure in 
Hertling \cite{hertling-tt*}. 
See \cite{barannikov-qpI,hertling-tt*, CIT:I} 
for the construction of Frobenius manifolds 
from this viewpoint.   
In the $tt^*$-geometry, 
the complex conjugate $\kappa_{\cH}(\F_\tau)$ 
of the Hodge structure $\F_\tau$ 
plays the role of the opposite subspace 
(see (\ref{eq:FpluskappaF})). 
When a miniversal \seminf VHS is equipped 
with both a real structure and an opposite subspace, 
under certain conditions, 
$\cM$ has a CDV (Cecotti-Dubrovin-Vafa) structure,  
which dominates both 
Frobenius manifold structure and 
Cecotti-Vafa structure on $T\cM$. 
See \cite[Theorem 5.15]{hertling-tt*} 
for more details. 
\end{remark}

\section{Real structures on the quantum cohomology}
\label{sec:A-model} 

In this section, we give a review of orbifold 
quantum cohomology and introduce a real structure on it. 
Some of the basic materials here have overlaps with 
the companion paper \cite{iritani-Int} 
and we refer the reader to it for the proofs.  

\subsection{Orbifold quantum cohomology} 
Quantum cohomology for orbifolds have been developed by 
Chen-Ruan \cite{chen-ruan:GW} for symplectic orbifolds 
and Abramovich-Graber-Vistoli \cite{AGV} for 
smooth Deligne-Mumford stacks. 
Real structures make sense for both (symplectic and algebraic) 
categories, but we will work in the algebraic category. 
For example, we need the Lefschetz decomposition 
in the proof of Theorem \ref{thm:pure_polarized}.

Let $\cX$ be a proper smooth Deligne-Mumford stack over $\C$.  
Let $I\cX$ be the \emph{inertia stack} of $\cX$, 
which is defined to be the fiber product 
$\cX\times_{\cX\times \cX} \cX$ of the two 
diagonal morphisms $\Delta\colon \cX\to \cX\times \cX$. 
A point of $I\cX$ is given by a pair $(x,g)$ of 
a point $x\in \cX$ and $g\in \Aut(x)$. 
Here $g$ is called the \emph{stabilizer} at $(x,g)\in I\cX$. 
The inertia stack is decomposed into connected components: 
\[
I\cX = \bigsqcup_{v\in \sfT} \cX_v = 
\cX_0 \cup \bigsqcup_{v\in \sfT'} \cX_v, \quad 
\cX_0 = \cX. 
\] 
Here $\sfT$ is the index set of connected components,  
$0 \in \sfT$ corresponds to the distinguished component 
with the trivial stabilizer and 
$\sfT' = \sfT \setminus \{0\}$. 
For each connected component $\cX_v$ of $I\cX$, 
we associate a rational number $\iota_v$ called \emph{age}. 
For $(x,g) \in \cX_v \subset I\cX$, 
let $0\le f_1,\dots,f_n<1$ ($n=\dim_\C \cX$) 
be rational numbers such that 
the stabilizer $g$ acts on the tangent space 
$T_x \cX$ with eigenvalues 
$\exp(2\pi\iu f_1),\dots, \exp(2\pi\iu f_n)$ 
(with multiplicities).  
Then we set 
\[
\iota_v := f_1 + \cdots + f_n. 
\]
The \emph{(even parity) orbifold cohomology group} 
$H_{\rm orb}^*(\cX)$ is defined to be 
\[
H_{\rm orb}^k(\cX) = 
\bigoplus_{v\in \sfT : k-2\iota_v \in 2\Z} 
H^{k-2\iota_v}(\cX_v,\C).  
\]
The degree $k$ of the orbifold cohomology 
can be a fractional number in general. 
Each factor $H^*(\cX_v,\C)$ in the right-hand side 
denotes the cohomology group of $\cX_v$ as a topological space.  
We define an involution $\inv \colon I\cX \to I\cX$ by 
$\inv(x,g) = (x,g^{-1})$ 
and the \emph{orbifold Poincar\'{e} pairing} by  
\[
(\alpha, \beta)_{\rm orb} : = 
\int_{I\cX} \alpha \cup \inv^*(\beta) 
= \sum_{v\in \sfT} \int_{\cX_v} 
\alpha_v \cup \beta_{\inv(v)}.  
\]
where $\alpha_v$, $\beta_v$ are the $\cX_v$-components 
of $\alpha$, $\beta$ and $\inv \colon \sfT \to \sfT$ 
denotes the induced involution on $\sfT$.  
This is a symmetric non-degenerate pairing of 
degree $-2n$, where $n=\dim_\C\cX$. 

Now assume that the coarse moduli space of $\cX$ is projective.  
The \emph{genus zero orbifold Gromov-Witten invariants} 
are integrals of the form: 
\begin{equation}
\label{eq:GWcorrelator}
\corr{\alpha_1\psi^{k_1},\dots,\alpha_l \psi^{k_l}}_{0,l,d}^\cX 
= \int_{[\cX_{0,l,d}]^{\rm vir}} \prod_{i=1}^l \ev_i^*(\alpha_i) \psi_i^{k_i}
\end{equation} 
for $\alpha_i \in H_{\rm orb}^*(\cX)$, $d\in H_2(\cX,\Q)$ 
and non-negative integers $k_i$.  
Here $\cX_{0,l,d}$ is the moduli space of 
(balanced twisted) stable maps to $\cX$ 
of degree $d$ and with $l$ marked points and 
$[\cX_{0,l,d}]$ is its virtual fundamental class. 
The map $\ev_i\colon \cX_{0,l,d} \to I\cX$ 
is the evaluation map\footnote
{The map $\ev_i$ only exists as a map of topological spaces. 
In \cite{AGV}, $\ev_i$ takes values in the 
\emph{rigidified inertia stack} 
which is the same as $I\cX$ as a topological space 
but is different as a stack.} at the $i$-th marked point 
and $\psi_i$ is the first Chern class of the 
line bundle over $\cX_{0,l,d}$ whose fiber at a stable map 
is the cotangent space of the coarse curve 
at the $i$-th marked point. See \cite{AGV} for details. 
The Gromov-Witten invariants (\ref{eq:GWcorrelator}) 
are non-zero only when $d$ is in the semigroup 
$\Eff_\cX \subset H_2(\cX,\Q)$ generated by 
effective curves.

The \emph{orbifold quantum cohomology} is a formal 
family of associative and commutative products $\bullet_\tau$ 
on $H_{\rm orb}^*(\cX) \otimes \C[\![\Eff_\cX]\!]$ 
parametrized by $\tau\in H_{\rm orb}^*(\cX)$.   
It is defined by the formula 
\begin{equation*} 
(\alpha \bullet_\tau \beta,\gamma)_{\rm orb} 
= \sum_{d\in \Eff_\cX} \sum_{l\ge 0} 
\sum_{k=1}^N \frac{1}{l!} 
\corr{\alpha,\beta,\tau,\dots,\tau, \gamma}_{0,l+3,d}^\cX Q^d,   
\end{equation*} 
where $Q^d$ is an element of the group ring $\C[\Eff_\cX]$ 
corresponding to $d\in \Eff_\cX$. 
We decompose the parameter $\tau$ as 
\begin{equation}
\label{eq:tau_decomp} 
\tau = \tau_{0,2} + \tau', \quad 
\tau_{0,2}\in H^2(\cX), \quad   
\tau' \in \bigoplus_{k\neq 1} H^{2k}(\cX) \oplus 
\bigoplus_{v\in \sfT'} H^*(\cX_v). 
\end{equation} 
Using the divisor equation \cite{tseng:QRR, AGV}, 
we have 
\begin{align} 
\label{eq:quantumproduct_divisor}
(\alpha \bullet_\tau \beta,\gamma)_{\rm orb} 
= \sum_{d\in \Eff_\cX} \sum_{l\ge 0} 
\sum_{k=1}^N 
\frac{1}{l!} 
\corr{\alpha,\beta,\tau',\dots,\tau',\gamma}_{0,l+3,d}^{\cX} 
e^{\pair{\tau_{0,2}}{d}} Q^d. 
\end{align} 
Thus the quantum product is a formal power series in 
$e^{\tau_{0,2}}Q$ and $\tau'$. 
\begin{assumption} 
\label{assump:convergence} 
The specialization $Q=1$ of the quantum product $\bullet_\tau$ 
\[
\circ_\tau := \bullet_\tau|_{Q=1} 
\]
is convergent over a connected, simply connected 
open set $U \subset H^*_{\rm orb}(\cX)$ 
containing the set 
\[
\left\{\tau \in H^*_{\rm orb}(\cX) \; ; 
\; \Re\pair{\tau_{0,2}}{d} < -M, \forall d\in 
\Eff_\cX \setminus \{0\}, \ \|\tau'\| < 1/M
\right\}  
\]
for a sufficiently big $M>0$. Here we used 
the decomposition (\ref{eq:tau_decomp}) 
and $\|\cdot\|$ is some norm on $H^*_{\rm orb}(\cX)$. 
\end{assumption} 

Under this assumption, $(H^*_{\rm orb}(\cX),\circ_\tau)$ 
defines an analytic family of rings over $U$. 
The domain $U$ here contains the following limit direction:  
\begin{equation}
\label{eq:largeradiuslimit}
\Re\pair{\tau_{0,2}}{d} \to -\infty, \quad 
\forall d\in \Eff_\cX\setminus\{0\}, 
\quad \tau' \to 0.  
\end{equation} 
This is called the \emph{large radius limit}. 
In this limit, $\circ_\tau$ goes 
to the orbifold cup product 
due to Chen-Ruan \cite{chen-ruan:new_coh_orb} 
(which is the same as the cup product 
when $\cX$ is a manifold). 

\subsection{Quantum cohomology \seminf VHS and 
the Galois action}
\label{subsec:AmodelVHS} 
Take a homogeneous basis $\{\phi_i\}_{i=1}^N$  
of $H^*_{\rm orb}(\cX)$. 
Let $\{t^i\}_{i=1}^N$ be the linear co-ordinates on 
$H_{\rm orb}^*(\cX)$ dual to $\{\phi_i\}_{i=1}^N$.  
Let $\pi \colon U\times \C \to U$ be the projection, 
where $U\subset H^*_{\rm orb}(\cX)$ is 
the open subset in Assumption \ref{assump:convergence}. 
\begin{definition} 
A \seminf VHS $\tcF$ over $U$ is defined to 
be the $\pi_* \cO_{U\times \C}$-module:  
\begin{align*} 
\tcF:= H^*_{\rm orb}(\cX) \otimes \pi_*\cO_{U\times \C} 
\end{align*}
endowed with the flat connection $\nabla$ (Dubrovin connection)  
and a pairing $(\cdot,\cdot)_{\tcF}$  
\begin{equation}
\label{eq:Dubrovinconn}
\nabla := d + \frac{1}{z}\sum_{i=1}^N 
(\phi_i\circ_\tau )dt^i, 
\quad (f,g)_{\tcF}:= (f(-z), g(z))_{\rm orb}. 
\end{equation} 
It is graded by the grading operator 
$\Grading$ and the Euler vector field $E$:  
\[
\Grading := 2 z\partial_z + 2 E + 2 (\mu + \frac{n}{2}), \quad 
E := \sum_{i=1}^N (1- \frac{1}{2}\deg \phi_i) t^i \parfrac{}{t^i}  
+ \sum_{i=1}^N r^i \parfrac{}{t^i}, 
\]
where $n=\dim_\C\cX$, $c_1(T\cX)= \sum_{i} r^i \phi_i\in H^2(\cX)$ 
and $\mu\in \End(H_{\rm orb}^*(\cX))$ is defined by 
\begin{equation}
\label{eq:def_mu} 
\mu(\phi_i): = \left(\frac{\deg \phi_i}{2} - \frac{n}{2}
\right) \phi_i. 
\end{equation} 
The \seminf VHS $\tcF$ is referred to as 
\emph{quantum $D$-module} in the literature 
\cite{givental-ICM, givental-mirrorthm-toric, 
guest-qc_int, iritani-Int}. 
The standard argument (as in \cite{cox-katz,manin}) 
and the WDVV equation in orbifold Gromov-Witten theory 
\cite{AGV} show that the Dubrovin connection is flat and that 
the above data satisfy the axioms of a graded \seminf VHS. \qed 
\end{definition} 

Let $H^2(\cX,\Z)$ denote the cohomology of the constant sheaf $\Z$ 
on the topological \emph{stack} $\cX$ 
(not on the topological \emph{space}). 
This group is the set of isomorphism classes of 
topological orbifold line bundles on $\cX$. 
Let $L_\xi \to \cX$ be the orbifold line bundle corresponding to 
$\xi\in H^2(\cX,\Z)$. 
Let $0\le f_v(\xi)<1$ be the rational number such that 
the stabilizer of $\cX_v$ ($v\in \sfT$) acts on $L_\xi|_{\cX_v}$ by 
a complex number $\exp(2\pi \iu f_v(\xi))$. 
This number $f_v(\xi)$ is called the \emph{age}  
of $L_\xi$ along $\cX_v$. 
Define $G(\xi)\colon 
H^*_{\rm orb}(\cX)\to H^*_{\rm orb}(\cX)$ 
and its derivative $dG(\xi)$ by 
\begin{align*}
G(\xi)(\tau_0 \oplus \bigoplus_{v \in \sfT'} \tau_v) 
&=(\tau_0- 2\pi \iu \xi_0) \oplus \bigoplus_{v\in \sfT'} 
e^{2\pi \iu f_v(\xi)} \tau_v, \\  
dG(\xi)(\tau_0 \oplus \bigoplus_{v \in \sfT'} \tau_v) 
&=\tau_0 \oplus \bigoplus_{v\in \sfT'} e^{2\pi \iu f_v(\xi)} \tau_v, 
\end{align*} 
where $\tau_v \in H^*(\cX_v)$ and $\xi_0$ is the image of 
$\xi$ in $H^2(\cX,\Q)$. 

\begin{proposition}[{%\cite[Proposition 3.1]{iritani-realint-preprint}, 
\cite[Proposition 2.3]{iritani-Int}}]
\label{prop:Galois} 
For $\xi\in H^2(\cX,\Z)$. the map  
\[ 
dG(\xi) \colon \tcF \to G(\xi)^* \tcF, \quad 
\tcF_\tau \ni s(z) \longmapsto dG(\xi)s(z) 
\in \tcF_{G(\xi)\tau}   
\]
is a homomorphism of graded \seminf VHS's. 
We call this the \emph{Galois action} of $H^2(\cX,\Z)$ on $\tcF$.  
\end{proposition}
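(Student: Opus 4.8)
The plan is to verify that $dG(\xi)$ respects each piece of structure of a graded semi-infinite VHS: the $\pi_*\cO_{U\times\C}$-module structure, the flat connection $\nabla$, the pairing $(\cdot,\cdot)_{\tcF}$, and the grading data $(\Grading, E)$. Since $dG(\xi)$ is, fiberwise, simply the $\C$-linear map acting by $1$ on the untwisted sector $H^*(\cX_0)$ and by $e^{2\pi\iu f_v(\xi)}$ on each twisted sector $H^*(\cX_v)$ — extended $\cO(\C)$-linearly in $z$ — the module-map property is automatic. Everything reduces to showing two compatibilities: that the Dubrovin connection transforms correctly under the base change $G(\xi)$, and that the pairing and grading are preserved.

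First I would treat the connection. The content is that $dG(\xi)$ intertwines $\nabla$ on $\tcF$ with the pull-back connection $G(\xi)^*\nabla$ on $G(\xi)^*\tcF$; concretely, writing $\tau' = G(\xi)\tau$, one needs
\[
dG(\xi)\circ\Bigl(\partial_{t^i} + \tfrac{1}{z}(\phi_i\circ_\tau)\Bigr)
= \Bigl(\sum_j \tfrac{\partial t'^j}{\partial t^i}\partial_{t'^j} + \tfrac{1}{z}\,dG(\xi)(\phi_i\circ_\tau)dG(\xi)^{-1}\Bigr)\circ dG(\xi).
\]
Because $G(\xi)$ acts on the $H^2(\cX)$-directions by the constant translation $\tau_0\mapsto \tau_0 - 2\pi\iu\xi_0$ and on all twisted directions by fixed scalars $e^{2\pi\iu f_v(\xi)}$, the Jacobian $\partial t'^j/\partial t^i$ is the corresponding constant diagonal matrix, and the derivative-part compatibility is immediate. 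The substantive identity is the multiplicative one:
\[
dG(\xi)(\phi_i\circ_\tau\phi_j) = \bigl(dG(\xi)\phi_i\bigr)\circ_{G(\xi)\tau}\bigl(dG(\xi)\phi_j\bigr).
\]
This is precisely the statement that $dG(\xi)$ is a ring isomorphism between the quantum product at $\tau$ and at $G(\xi)\tau$, and it follows from the divisor equation (\ref{eq:quantumproduct_divisor}) together with the fact that the age shifts combine additively: for a curve class $d$, the factor $e^{\pair{\tau_{0,2}}{d}}$ absorbs the translation $-2\pi\iu\pair{\xi_0}{d}$, while the product of the sector scalars $e^{2\pi\iu f_v(\xi)}$ over the insertions of a stable map is controlled by the relation $f_{v_1}(\xi)+\cdots$ modulo $\Z$ being governed by $\pair{\xi_0}{d}$ (equivalently, by how the orbifold line bundle $L_\xi$ pulls back along the stable map). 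This age-additivity is the step I expect to be the main obstacle: one must check that the character $e^{2\pi\iu f_\bullet(\xi)}$ on sectors is a homomorphism for the fusion governed by the moduli of stable maps, so that the scalars decorating the three insertions in $\corr{\phi_i,\phi_j,\phi_k}_{0,3,d}$ multiply to $e^{-2\pi\iu\pair{\xi_0}{d}}$ (up to the $Q^d$ bookkeeping); this is exactly the compatibility proved in \cite{iritani-Int}, so in the paper I would simply invoke it.

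Having established the ring-isomorphism property, the remaining checks are routine. The pairing is preserved because $(\cdot,\cdot)_{\rm orb}$ pairs the sector $\cX_v$ with $\cX_{\inv(v)}$, and $f_v(\xi) + f_{\inv(v)}(\xi) \in \Z$ (the stabilizer acts on $L_\xi|_{\cX_v}$ and on $L_\xi|_{\cX_{\inv(v)}}$ by inverse scalars), so $e^{2\pi\iu f_v(\xi)}e^{2\pi\iu f_{\inv(v)}(\xi)} = 1$; hence $dG(\xi)$ is an isometry, and since $dG(\xi)$ is $z$-independent (acting on $\tcF = H^*_{\rm orb}(\cX)\otimes\pi_*\cO_{U\times\C}$ through the $H^*_{\rm orb}$ factor only), it commutes with the $z\mapsto -z$ that enters $(\cdot,\cdot)_{\tcF}$. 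For the grading: $E$ is carried to $E$ under $G(\xi)$ because the translation part $\sum r^i\partial/\partial t^i$ in $E$ is invariant under the translation $\tau_0\mapsto\tau_0-2\pi\iu\xi_0$ (constant vector fields are translation-invariant) and the scaling part is compatible with the diagonal scalars; and $\Grading$ involves $\mu$, which is diagonal in the sector decomposition with eigenvalue depending only on $\deg$, hence commutes with the scalar action $dG(\xi)$, while $z\partial_z$ is untouched. Assembling these, $dG(\xi)$ satisfies all the axioms making it a morphism of graded semi-infinite VHS's, which is the assertion of Proposition~\ref{prop:Galois}.
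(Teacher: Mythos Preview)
The paper gives no proof of this proposition; it simply cites \cite[Proposition 2.3]{iritani-Int} and moves on. Your sketch is a correct outline of the argument one would expect to find there: you have correctly isolated the one nontrivial step (that $dG(\xi)$ is a ring isomorphism $(H^*_{\rm orb}(\cX),\circ_\tau)\to(H^*_{\rm orb}(\cX),\circ_{G(\xi)\tau})$, which amounts to the age--additivity relation $\sum_k f_{v_k}(\xi)\equiv \pair{\xi_0}{d}\bmod\Z$ for the sectors hit by a stable map of degree $d$), and you rightly observe that this is the content deferred to \cite{iritani-Int}. The remaining checks (pairing via $f_v(\xi)+f_{\inv(v)}(\xi)\in\Z$, grading via the observation that the $H^2$-coefficients in the scaling part of $E$ vanish so translation causes no trouble, and that $\mu$ and $dG(\xi)$ are simultaneously diagonal) are routine and you have them right.
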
 

We can assume that $U$ is invariant under the Galois action. 

\begin{definition} 
The \emph{quantum cohomology 
\seminf VHS $\cF$} over $U/H^2(\cX,\Z)$  
is the quotient of $\tcF \to U$ by 
the Galois action by $H^2(\cX,\Z)$ 
in Proposition \ref{prop:Galois}:  
\[
\cF := (\tcF \to U)/H^2(\cX,\Z).  
\]
The flat connection, the pairing and 
the grading operator on $\tcF$ 
induce those on $\cF$. \qed 
\end{definition}

\subsection{The fundamental solution $L(\tau,z)$}
As in Section \ref{subsec:realintstr_VHS_def}, 
the graded \seminf VHS $\tcF$ is rephrased 
as a flat connection $\hatnabla$ on the locally free sheaf 
$\cRz = H^*_{\rm orb}(\cX) \otimes \cO_{U\times \C^*}$.  
Then $\hatnabla$ defines a $\C$-local system 
$R=\Ker(\hatnabla)$ over $U\times \C^*$. 
A section of the local system $R$ 
is a cohomology-valued function $s(\tau,z)$ 
satisfying the differential equations: 
\begin{align}
\label{eq:qde_1}
&\nabla_k s = \hatnabla_k s=  
\parfrac{s}{t^k}  + \frac{1}{z} \phi_k \circ_\tau s =0, 
\quad k=1,\dots, N, \\ 
\label{eq:qde_2} 
&\hatnabla_{z\partial_z} s= 
z\parfrac{s}{z} - \frac{1}{z} E\circ_\tau s + \mu s = 0. 
\end{align} 
These equations are called \emph{quantum differential equations}. 
We give a fundamental solution $L(\tau,z)$ to the differential 
equations (\ref{eq:qde_1})  
using gravitational descendants. 
Let $\pr\colon I\cX \to \cX$ be the natural projection.  
We define the action of a class $\tau_0 \in H^*(\cX)$ on 
$H_{\rm orb}^*(\cX)$ by 
\[
\tau_0 \cdot \alpha = \pr^*(\tau_0) \cup \alpha, \quad 
\alpha \in H_{\rm orb}^*(\cX),  
\]
where the right-hand side is the cup product on $I\cX$. 
Let $\{\phi_i\}_{i=1}^N, \{\phi^i\}_{i=1}^N$ 
be mutually dual bases with respect to the 
orbifold Poincar\'{e} pairing, \emph{i.e.}  
$(\phi_i,\phi^j)_{\rm orb} = \delta_{ij}$. 
We define an $\End(H^*_{\rm orb}(\cX))$-valued 
function $L(\tau,z)$ by 
\begin{equation}
\label{eq:fundamentalsol_L}
L(\tau,z) \phi_i := e^{-\tau_{0,2}/z} \phi_i + 
\sum_{\substack{(d,l)\neq (0,0) \\ d\in \Eff_\cX}}
\sum_{k=1}^N  
\frac{\phi^k}{l!} \corr{\phi_k, \tau',\dots,\tau', 
\frac{e^{-\tau_{0,2}/z} \phi_i}{-z-\psi}}_{0,l+2,d}^\cX 
e^{\pair{\tau_{0,2}}{d}},  
\end{equation} 
where we used the decomposition (\ref{eq:tau_decomp}) 
and $1/(-z-\psi)$ in the correlator should be expanded in the series 
$\sum_{k=0}^\infty (-z)^{-k-1}\psi^k$.  

\begin{proposition}[{%\cite[Proposition 3.3]{iritani-realint-preprint},
\cite[Proposition 2.4]{iritani-Int}}] 
\label{prop:fundamentalsol_A} 
$L(\tau,z)$ satisfies the following differential equations: 
\begin{align}
\label{eq:diffeq_L}
\begin{split}
\nabla_k L(\tau,z) \phi_i &=0, \quad k=1,\dots,N, \\ 
\hatnabla_{z\partial_z} L(\tau,z) \phi_i &= 
L(\tau,z) (\mu \phi_i -\frac{\rho}{z}\phi_i),  
\end{split} 
\end{align}
where $\rho := c_1(T\cX)\in H^2(\cX)$. 
The $\nabla$-flat section $L(\tau,z) \phi_i$ 
is characterized by the asymptotic initial condition 
\[
L(\tau,z) \phi_i \sim e^{-\tau_{0,2}/z} \phi_i 
\]
near the large radius limit (\ref{eq:largeradiuslimit}) 
with $\tau'=0$. 
Set 
\[
z^{-\mu} z^\rho := \exp(-\mu \log z) \exp(\rho \log z). 
\]
Then we have  
\begin{gather}
\label{eq:diffeq_z_L}  
\nabla_k (L(\tau,z) z^{-\mu}z^\rho \phi_i) = 0, \quad 
\hatnabla_{z\partial_z} ( L(\tau,z) z^{-\mu} z^\rho \phi_i)=0, \\ 
\label{eq:unitarity_L}  
(L(\tau,-z) \phi_i, L(\tau,z)\phi_j)_{\rm orb} 
= (\phi_i,\phi_j)_{\rm orb}, \\ 
\label{eq:Galois_L} 
dG(\xi) L(G(\xi)^{-1} \tau, z) \alpha  
= L(\tau,z) e^{-2\pi\iu \xi_0/z} e^{2\pi\iu f_v(\xi)} 
\alpha, \quad \alpha \in H^*(\cX_v), 
\end{gather}
where $(dG(\xi), G(\xi))$ is the Galois action
associated to $\xi\in H^2(\cX,\Z)$.    
(See Section \ref{subsec:AmodelVHS}.) 
\end{proposition}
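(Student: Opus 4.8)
The plan is to deduce all the assertions from the two differential equations in~(\ref{eq:diffeq_L}) --- alternatively one may simply cite \cite[Proposition~2.4]{iritani-Int}, of which this is a restatement --- and the first of these, $\nabla_k L(\tau,z)\phi_i=0$, is the main point. Writing $\nabla_k(L\phi_i)=\partial_{t^k}(L\phi_i)+z^{-1}\phi_k\circ_\tau(L\phi_i)$ and expanding both terms through the definition~(\ref{eq:fundamentalsol_L}) of $L$ and the definition of $\circ_\tau$, the derivative $\partial_{t^k}$ amounts to inserting one extra $\phi_k$ carrying trivial $\psi$-power: for a $\tau'$-direction this is immediate, and for a $\tau_{0,2}$-direction it follows by running the divisor equation in reverse, which turns the two terms produced by differentiating $e^{-\tau_{0,2}/z}$ and $e^{\pair{\tau_{0,2}}{d}}$ into a single $\phi_k$-insertion. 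The term $z^{-1}\phi_k\circ_\tau(L\phi_i)$, after splitting the moduli space at the gluing node (and again using the divisor equation for the $\tau_{0,2}$-part of $\circ_\tau$), becomes a sum of boundary contributions, and the identity $\nabla_k(L\phi_i)=0$ is precisely the genus-zero topological recursion relation coming from the expression of the $\psi$-classes on $\overline{M}_{0,n}$ as sums of boundary divisors separating one marked point from two fixed ones; this relation holds in orbifold Gromov--Witten theory by the corresponding relation on the moduli of the coarse domain curve (see \cite{AGV, tseng:QRR}). I expect this step to be the main obstacle.

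\emph{Homogeneity, asymptotics, and~(\ref{eq:diffeq_z_L}).} The second equation in~(\ref{eq:diffeq_L}) is a quasi-homogeneity statement for the descendant series with respect to the grading recorded jointly by $z\partial_z$, the Euler field $E$, and the operator $\mu$; I would obtain it by applying $z\partial_z-z^{-1}E\circ_\tau+\mu$ term by term to~(\ref{eq:fundamentalsol_L}) and using the (virtual) dimension constraint on the correlators together with the string equation --- bookkeeping rather than conceptual. For the asymptotic characterization, setting $\tau'=0$ annihilates all $l\ge 1$ terms, so $L(\tau,z)\phi_i=e^{-\tau_{0,2}/z}\phi_i+\sum_{d\in\Eff_\cX\setminus\{0\}}(\cdots)e^{\pair{\tau_{0,2}}{d}}\to e^{-\tau_{0,2}/z}\phi_i$ in the large radius limit~(\ref{eq:largeradiuslimit}); uniqueness holds because the $N$ sections $L(\tau,z)\phi_i$ are $\nabla$-flat with linearly independent leading terms, hence form a basis of the rank-$N$ space of flat sections, and near the limit $\nabla$ degenerates to $d+z^{-1}(\,\cdot\cup\,)$ with all quantum corrections of size $O(e^{\pair{\tau_{0,2}}{d}})$, so a flat section is pinned down by its leading asymptotics. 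Then~(\ref{eq:diffeq_z_L}) is formal: $\nabla_k$ already kills $L\phi_i$ and commutes with right multiplication by the $\tau$-independent operator $z^{-\mu}z^\rho$, while a one-line computation using $[\mu,\rho\cup]=\rho\cup$ gives $z\partial_z(z^{-\mu}z^\rho\phi_i)=(-\mu+z^{-1}\rho\cup)(z^{-\mu}z^\rho\phi_i)$, which exactly cancels the right-hand side of the $\hatnabla_{z\partial_z}$-equation.

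\emph{Unitarity and Galois symmetry.} For~(\ref{eq:unitarity_L}), differentiating $(L(\tau,-z)\phi_i,L(\tau,z)\phi_j)_{\rm orb}$ in $t^k$ and using flatness turns the two resulting terms into $\pm z^{-1}(\phi_k\circ_\tau(\cdot),\cdot)_{\rm orb}$, which cancel by the Frobenius identity $(\phi_k\circ_\tau a,b)_{\rm orb}=(a,\phi_k\circ_\tau b)_{\rm orb}$; hence the pairing is constant in $\tau$, and evaluating at the large radius limit --- where cup product by $\tau_{0,2}\in H^2(\cX)$ is self-adjoint for the orbifold pairing since $\inv^*\pr^*=\pr^*$ --- gives $(e^{\tau_{0,2}/z}\phi_i,e^{-\tau_{0,2}/z}\phi_j)_{\rm orb}=(\phi_i,\phi_j)_{\rm orb}$. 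Finally, for~(\ref{eq:Galois_L}) I would invoke Proposition~\ref{prop:Galois}: since $dG(\xi)$ is a morphism of graded \seminf VHS's it intertwines the flat connections, so $dG(\xi)\bigl(L(G(\xi)^{-1}\tau,z)\alpha\bigr)$ is again a $\nabla$-flat section near $\tau$; one then checks that as $\tau$ approaches the large radius limit with $\tau'=0$ this section and $L(\tau,z)\bigl(e^{-2\pi\iu\xi_0/z}e^{2\pi\iu f_v(\xi)}\alpha\bigr)$ both have leading term $e^{-\tau_{0,2}/z}e^{-2\pi\iu\xi_0/z}e^{2\pi\iu f_v(\xi)}\alpha$ (using $(G(\xi)^{-1}\tau)_{0,2}=\tau_{0,2}+2\pi\iu\xi_0$ and $dG(\xi)\alpha=e^{2\pi\iu f_v(\xi)}\alpha$ for $\alpha\in H^*(\cX_v)$), so they coincide by the uniqueness just established; the age--degree input that makes the Galois action well-defined is thereby absorbed into the already-quoted Proposition~\ref{prop:Galois}.
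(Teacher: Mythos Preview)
The paper does not actually prove this proposition: it is stated with the attribution \cite[Proposition~2.4]{iritani-Int} and no argument is given here, the proof being deferred entirely to the companion paper. You already note this possibility in your first sentence, and your independent sketch is correct in every part: the flatness $\nabla_k L\phi_i=0$ is indeed the genus-zero topological recursion relation (combined with the divisor equation to handle the $\tau_{0,2}$-dependence of~(\ref{eq:fundamentalsol_L})); the $\hatnabla_{z\partial_z}$-equation is the virtual dimension constraint; the derivation of~(\ref{eq:diffeq_z_L}) from~(\ref{eq:diffeq_L}) via $z\partial_z(z^{-\mu}z^\rho)=(-\mu+z^{-1}\rho)z^{-\mu}z^\rho$ (using $[\mu,\rho\,\cdot\,]=\rho\,\cdot\,$) is exactly right; the unitarity argument by differentiating and invoking the Frobenius property, then evaluating at the large radius limit, is the standard one; and your deduction of~(\ref{eq:Galois_L}) from Proposition~\ref{prop:Galois} together with the asymptotic characterization is clean and correct. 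One minor quibble: for the second equation in~(\ref{eq:diffeq_L}) the relevant input beyond the dimension formula is the divisor equation (for the $\rho$-part of $E$) rather than the string equation, but this does not affect the validity of the sketch.
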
 

The fundamental solution $L(\tau,z)$ is 
a priori formal power series. 
Under Assumption \ref{assump:convergence}, however, 
the convergence of $L(\tau,z)$ follows from 
the fact that it is a solution to the 
analytic differential equations. 

\subsection{The space of (multi-valued) flat sections} 
Here we apply the abstract constructions 
in Section \ref{subsec:semi-inf_period} 
to the case of the quantum cohomology \seminf VHS. 
Using the fundamental solution above, 
we will identify the spaces $\cH$ and $\cV$ 
with the Givental's loop space $\cH^\cX$ \cite{coates-givental} 
and the cohomology group $\cV^\cX$
\[
\cH^\cX:=H_{\rm orb}^*(\cX)\otimes \cO(\C^*), 
\quad 
\cV^\cX := H_{\rm orb}^*(\cX). 
\] 
For the quantum cohomology \seminf VHS, 
$\cH$ (resp.\ $\cV$) consists of 
cohomology-valued functions $s(\tau,z)$ 
satisfying (\ref{eq:qde_1}) (resp.\ both 
(\ref{eq:qde_1}) and (\ref{eq:qde_2})), 
so we can identify it with $\cH^\cX$ (resp. $\cV^\cX$) 
via $L(\tau,z)$ 
(by (\ref{eq:diffeq_L}), (\ref{eq:diffeq_z_L})):   
\begin{alignat*}{3} 
&\cH^\cX \cong \cH,& \quad 
&\alpha(z) \longmapsto L(\tau,z)\alpha(z), \\ 
&\cV^\cX \cong \cV,& \quad 
&\alpha \longmapsto  L(\tau,z) z^{-\mu}z^\rho \alpha. 
\end{alignat*} 
These identifications are understood 
throughout the paper\footnote
{However, elements of $\cH^\cX$ (or $\cV^\cX$) 
themselves are loops in the cohomology group 
(or cohomology classes) 
and are not treated as flat sections. 
When we refer to the corresponding 
sections, we explicitly denote them by  
$L(\tau,z)\alpha(z)$ (or $ L(\tau,z)z^{-\mu}z^\rho\alpha$) 
for $\alpha(z) \in \cH^\cX$ (or $\alpha \in \cV^\cX$).  
}. 
The flat connection $\hatnabla$ and the pairing 
$(\cdot,\cdot)_{\tcF}$ of the quantum cohomology 
\seminf VHS induces the operator (by (\ref{eq:diffeq_L})) 
\begin{equation} 
\label{eq:conn_z-direction}
\hatnabla_{z\partial_z} \colon \cH^\cX \to \cH^\cX, \quad 
\hatnabla_{z\partial_z} = z \parfrac{}{z} + \mu - \frac{\rho}{z} 
\end{equation} 
and the pairing (by (\ref{eq:unitarity_L})) 
\begin{equation} 
\label{eq:pairing_H} 
(\cdot,\cdot)_{\cH^\cX} \colon \cH^\cX \times \cH^\cX 
\to \cO(\C^*), \quad 
(\alpha,\beta)_{\cH^\cX} = (\alpha(-z), \beta(z))_{\rm orb}.  
\end{equation} 
As in Section \ref{subsec:semi-inf_period}, 
we can regard $\cH^\cX$ as the flat vector bundle 
$(\sfH^\cX,\hatnabla_{z\partial_z})$: 
\[
\sfH^\cX := H^*_{\rm orb}(\cX) \times \C^* \to \C^*, 
\quad 
\hatnabla_{z\partial_z} = z \parfrac{}{z} + \mu - \frac{\rho}{z}. 
\]
Then $\cV^\cX$ can be identified with the space of 
multi-valued flat sections of $\sfH^\cX$: 
\begin{equation} 
\label{eq:solutionmap_z} 
z^{-\mu}z^\rho \colon \cV^\cX \to 
\Gamma(\widetilde{\C^*}, \cO(\sfH^\cX)), 
\quad \alpha  
\mapsto z^{-\mu} z^\rho \alpha. 
\end{equation} 
The pairing $(\cdot,\cdot)_{\cV^\cX}$ on $\cV^\cX$ 
(see (\ref{eq:pairing_def_V}) for the pairing on $\cV$) 
can be written as 
\begin{align}
\label{eq:pairing_Amodel_V}
(\alpha,\beta)_{\cV^\cX}  
= (e^{\pi \iu \rho}\alpha, e^{\pi \iu \mu} \beta)_{\rm orb}.  
\end{align}
The embedding $\J_\tau\colon \tcF_\tau \hookrightarrow \cH^\cX$ 
of a fiber $\tcF_\tau$ (see (\ref{eq:embedding_J})) 
is given by the inverse of $L(\tau,z)$: 
\begin{align}
\label{eq:fundamentalsol_A}
\begin{split}
\J_\tau \alpha &= L(\tau,z)^{-1}\alpha   
 =L(\tau,-z)^\dagger \alpha \\ 
&= e^{\tau_{0,2}/z}\Biggl(\alpha + 
\sum_{\substack{(d,l)\neq (0,0) \\ d\in \Eff_\cX}} \sum_{i=1}^N 
\frac{1}{l!} 
\corr{\alpha,\tau',\dots,\tau', 
\frac{\phi_i }{z-\psi}}_{0,l+2,d}^\cX 
e^{\pair{\tau_{0,2}}{d}} \phi^i\Biggr),   
\end{split}
\end{align} 
where $L(\tau,-z)^\dagger$ is the adjoint of 
$L(\tau,-z)$ with respect to $(\cdot,\cdot)_{\rm orb}$. 
The second line follows from (\ref{eq:fundamentalsol_L}) 
and an easy computation of the adjoint $L(\tau,-z)^\dagger$.  
The image $\J_\tau \unit$ of the unit section $\unit$ 
is called the \emph{$J$-function}. 
The image $\F_\tau = \J_\tau(H^*_{\rm orb}(\cX)\otimes \cO(\C))$ 
of the embedding gives a moving subspace realization of 
the quantum cohomology \seminf VHS.   

The Galois action on $\tcF$ acts on 
$\nabla$-flat sections as $s(\tau,z) 
\mapsto dG(\xi) s(G(\xi)^{-1}\tau,z)$. 
The following lemma follows from (\ref{eq:Galois_L}). 
\begin{lemma} 
The Galois actions on $\cH^\cX$ and $\cV^\cX$ 
are given by the maps: 
\begin{align}
\label{eq:Galois_H}
G^{\cH}(\xi)(\tau_0 \oplus \bigoplus_{v\in \sfT'} \tau_v) 
& = e^{-2\pi \iu \xi_0/z}\tau_0 \oplus \bigoplus_{v\in \sfT'} 
e^{-2\pi \iu \xi_0/z} e^{2\pi \iu f_v(\xi)} \tau_v,  \\ 
\label{eq:Galois_V}
G^{\cV}(\xi)(\tau_0 \oplus \bigoplus_{v\in \sfT'} \tau_v)  
& = e^{-2\pi \iu \xi_0}\tau_0 \oplus \bigoplus_{v\in \sfT'} 
e^{-2\pi \iu \xi_0} e^{2\pi \iu f_v(\xi)} \tau_v,  
\end{align}
where we used the decomposition 
$\cH^{\cX} = \bigoplus_{v\in \sfT} 
H^*( {\cX_v})\otimes \cO(\C^*)$. 
\end{lemma}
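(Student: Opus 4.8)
The plan is to transport the Galois action through the two identifications $\cH^\cX\cong\cH$ and $\cV^\cX\cong\cV$ --- given respectively by $\alpha(z)\mapsto L(\tau,z)\alpha(z)$ and $\alpha\mapsto L(\tau,z)z^{-\mu}z^\rho\alpha$ --- and to reduce everything to the transformation rule (\ref{eq:Galois_L}) for the fundamental solution, which is the only non-formal input and is already available. Throughout I use that the Galois action on flat sections is $s(\tau,z)\mapsto dG(\xi)\,s(G(\xi)^{-1}\tau,z)$.

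First I would treat $\cH^\cX$. An element $\alpha(z)$ lying in the summand $H^*(\cX_v)\otimes\cO(\C^*)$ corresponds to the $\nabla$-flat section $L(\tau,z)\alpha(z)$, hence its Galois image corresponds to $dG(\xi)\,L(G(\xi)^{-1}\tau,z)\alpha(z)$. Since (\ref{eq:Galois_L}) is linear in its cohomology argument over functions of $z$, I may apply it to the loop $\alpha(z)$ and obtain $L(\tau,z)\,e^{-2\pi\iu\xi_0/z}e^{2\pi\iu f_v(\xi)}\alpha(z)$. So the Galois action on $\cH^\cX$ is multiplication by $e^{-2\pi\iu\xi_0/z}e^{2\pi\iu f_v(\xi)}$ on the $\cX_v$-summand; as the age $f_0(\xi)$ of the untwisted sector vanishes, this is precisely (\ref{eq:Galois_H}).

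Next I would treat $\cV^\cX$. Here $\alpha\in H^*(\cX_v)$ corresponds to the multi-valued flat section $L(\tau,z)z^{-\mu}z^\rho\alpha$, so its Galois image corresponds to $dG(\xi)\,L(G(\xi)^{-1}\tau,z)\,z^{-\mu}z^\rho\alpha$. Because $\mu$ acts by a scalar on each graded piece and cup product by $\rho\in H^2$ preserves each summand $H^*(\cX_v)$, the function $z^{-\mu}z^\rho\alpha$ remains $H^*(\cX_v)$-valued, so (\ref{eq:Galois_L}) again applies componentwise and yields $L(\tau,z)\,e^{-2\pi\iu\xi_0/z}e^{2\pi\iu f_v(\xi)}z^{-\mu}z^\rho\alpha$. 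It then remains to move $z^{-\mu}z^\rho$ back to the left, and the crucial identity is $z^\mu\,e^{-2\pi\iu\xi_0/z}\,z^{-\mu}=e^{-2\pi\iu\xi_0}$ --- a cup-product exponential with no surviving power of $z$. This follows from $[\mu,\xi_0\cup]=\xi_0\cup$ (cupping with the degree-two class $\xi_0$ raises the orbifold degree by $2$, on which $\mu$ acts by $(\deg/2-n/2)$), whence $z^\mu(\xi_0\cup)z^{-\mu}=z\,(\xi_0\cup)$. Since $\xi_0$ and $\rho$ both lie in $H^2$ the operators $e^{-2\pi\iu\xi_0}$ and $z^\rho$ commute, and $e^{2\pi\iu f_v(\xi)}$ is a scalar, so the expression becomes $L(\tau,z)z^{-\mu}z^\rho\bigl(e^{-2\pi\iu\xi_0}e^{2\pi\iu f_v(\xi)}\alpha\bigr)$; this identifies the Galois action on $\cV^\cX$ with $G^{\cV}(\xi)$ of (\ref{eq:Galois_V}).

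I do not expect a genuine obstacle: the computation is mechanical once (\ref{eq:Galois_L}) is at hand. The two points that need care are (i) that the identifications $\cH\cong\cH^\cX$, $\cV\cong\cV^\cX$ are $\tau$-dependent, so the $\tau$-independence of the resulting operators on $\cH^\cX$ and $\cV^\cX$ is exactly what the argument $G(\xi)^{-1}\tau$ inside $L$ in (\ref{eq:Galois_L}) guarantees; and (ii) the conjugation identity $z^\mu e^{-2\pi\iu\xi_0/z}z^{-\mu}=e^{-2\pi\iu\xi_0}$ invoked in the $\cV^\cX$ case, which is the one line worth spelling out in full.
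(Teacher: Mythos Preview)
Your proposal is correct and follows exactly the approach the paper indicates: the paper simply states that the lemma follows from (\ref{eq:Galois_L}), and you have carefully unpacked this by transporting the Galois action through the identifications $\cH\cong\cH^\cX$ and $\cV\cong\cV^\cX$ and invoking (\ref{eq:Galois_L}), with the only nontrivial step being the conjugation identity $z^\mu e^{-2\pi\iu\xi_0/z}z^{-\mu}=e^{-2\pi\iu\xi_0}$, which you justify correctly via $[\mu,\xi_0\cup]=\xi_0\cup$.
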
 

The Galois actions on $\cH^\cX$, $\cV^\cX$ 
can be viewed as the monodromy of 
$\nabla$ over $U/H^2(\cX,\Z)$. 
The monodromy transformation of 
$\hatnabla_{z\partial_z}$ on $\C^*$ 
is given by 
\begin{equation}
\label{eq:z-monodromy_V}
e^{-2\pi \iu \mu} e^{2\pi \iu \rho} 
\colon \cV^\cX \longrightarrow \cV^\cX. 
\end{equation} 
This coincides with the Galois action $(-1)^n G^\cV([K_\cX])$. 
Here, $[K_\cX]$ is the class of the canonical line bundle.
When $\cX$ is Calabi-Yau, 
\emph{i.e.} $K_\cX$ is trivial,  
the pairing $(\cdot,\cdot)_{\cV^\cX}$ 
is either symmetric or anti-symmetric depending on
whether $n$ is even or odd.   
In general, this pairing is neither symmetric nor anti-symmetric.  

Recall that a real structure on the \seminf VHS 
$\cF$ is given by a sub $\R$-local system $R_\R$ 
of the $\C$-local system $R$ defined by $\hatnabla$ 
(Definition \ref{def:realintstr}).  
Therefore, a real structure on the 
quantum cohomology \seminf VHS $\cF$ is identified 
with a monodromy-invariant real subspace 
$\cV^\cX_\R$ in the space $\cV^\cX$ of 
multi-valued $\hatnabla$-flat sections.

\begin{proposition}
\label{prop:char_A_real_int_str}
A real (integral) structure $R_\R$ on the 
quantum cohomology \seminf VHS $\cF$   
is given by a real subspace $\cV_\R^{\cX}$ 
(resp. integral lattice $\cV_\Z^\cX$) 
of $\cV^\cX = H^*_{\rm orb}(\cX)$ satisfying 
\begin{itemize}
\item[(i)] $\cV^{\cX} = \cV_\R^{\cX} \otimes_\R \C$ 
(resp. $\cV^{\cX} = \cV^\cX_\Z \otimes_\Z \C$);  
\item[(ii)] $\cV_\R^\cX$ (resp. $\cV_\Z^\cX$) is invariant 
under the Galois action (\ref{eq:Galois_V}); 
\item[(iii)] The pairing (\ref{eq:pairing_Amodel_V}) 
restricted on $\cV^\cX_\R$ (resp. $\cV_\Z^\cX$) takes values in $\R$
(resp. takes values in $\Z$ and is unimodular).    
\end{itemize} 
\end{proposition}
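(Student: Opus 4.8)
The plan is to verify the identification, announced just before the statement, of a real structure on $\cF$ with a monodromy-invariant real form of $\cV^{\cX}=H^*_{\rm orb}(\cX)$, and then to show that ``monodromy-invariant'' unpacks precisely to condition~(ii) while the remaining axioms of Definition~\ref{def:realintstr} unpack to~(i) and~(iii). The translations will be reversible, so they yield a bijection between real (resp.\ integral) structures on $\cF$ and subspaces (resp.\ lattices) of $\cV^{\cX}$ satisfying (i)--(iii).

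First I would set up the descent along $U\to U/H^2(\cX,\Z)$. Since $U$ is connected and simply connected (Assumption~\ref{assump:convergence}), a flat bundle on $(U/H^2(\cX,\Z))\times\C^*$ amounts to an $H^2(\cX,\Z)$-equivariant flat bundle on $U\times\C^*$, and the trivialization $\alpha\mapsto L(\tau,z)z^{-\mu}z^{\rho}\alpha$ of~(\ref{eq:solutionmap_z}) identifies the space $\cV$ of multi-valued $\hatnabla$-flat sections with $\cV^{\cX}$. Under this identification the equivariant structure becomes the Galois action $G^{\cV}(\xi)$ of~(\ref{eq:Galois_V}) by~(\ref{eq:Galois_L}), while the monodromy of the connection $\hatnabla_{z\partial_z}$ on $\C^*$ around $z=0$ becomes $e^{-2\pi\iu\mu}e^{2\pi\iu\rho}$ as in~(\ref{eq:z-monodromy_V}). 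Hence a sub $\C$-local system of $R$ over $(U/H^2(\cX,\Z))\times\C^*$ corresponds to a $\C$-subspace of $\cV^{\cX}$ invariant under all $G^{\cV}(\xi)$, $\xi\in H^2(\cX,\Z)$, and under $e^{-2\pi\iu\mu}e^{2\pi\iu\rho}$; the same holds for sub $\R$- and $\Z$-local systems.

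Next I would observe that the $z=0$ monodromy adds no new constraint: by the identity recorded after~(\ref{eq:z-monodromy_V}) one has $e^{-2\pi\iu\mu}e^{2\pi\iu\rho}=(-1)^nG^{\cV}([K_\cX])$, which is $\pm$ a Galois action and therefore preserves every Galois-invariant subspace. Consequently a sub $\C$-local system of $R$ is exactly a subspace of $\cV^{\cX}$ satisfying~(ii); the condition $R=R_\R\oplus\iu R_\R$ (resp.\ $R=R_\Z\otimes_\Z\C$) in Definition~\ref{def:realintstr} becomes~(i) after restricting to a fiber; and the monodromy-invariance of $R_\R$ (resp.\ $R_\Z$) is~(ii). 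For~(iii), recall from~(\ref{eq:pairing_def_V}) that $(s_1,s_2)_{\cV^{\cX}}$ is the value of $(\cdot,\cdot)_R$ on the counterclockwise half-loop parallel translate of $s_1$ paired against $s_2$, and that by~(\ref{eq:pairing_Amodel_V}) this equals $(e^{\pi\iu\rho}s_1,e^{\pi\iu\mu}s_2)_{\rm orb}$. Since half-loop parallel transport is an isomorphism $R_{\R,(\tau,z)}\to R_{\R,(\tau,-z)}=((-)^*R_\R)_{(\tau,z)}$ (resp.\ of the integral local systems), the pairing $(\cdot,\cdot)_R$ is $\R$-valued on $(-)^*R_\R\otimes_\R R_\R$ (resp.\ $\Z$-valued and unimodular on $(-)^*R_\Z\otimes R_\Z$) if and only if~(\ref{eq:pairing_Amodel_V}) is $\R$-valued on $\cV^{\cX}_\R$ (resp.\ $\Z$-valued and unimodular on $\cV^{\cX}_\Z$), which is~(iii).

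The main obstacle I anticipate is purely bookkeeping around the descent and the $z$-direction: one must check that a sub-local system downstairs is captured by a \emph{Galois-equivariant} subspace upstairs (if the $H^2(\cX,\Z)$-action on $U$ is not free one replaces $\pi_1$ by the orbifold fundamental group, but the conclusion is unchanged), and, more importantly, one must invoke the identity $e^{-2\pi\iu\mu}e^{2\pi\iu\rho}=(-1)^nG^{\cV}([K_\cX])$ to see that condition~(ii), which refers only to the Galois action, already forces invariance under the loop around $z=0$. Everything else is a direct translation through the fundamental solution $L(\tau,z)$.
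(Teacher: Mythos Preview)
Your proposal is correct and is precisely the argument the paper has in mind: the proposition is stated without a formal proof, the paragraph preceding it explaining that a real structure on $\cF$ is a monodromy-invariant real form of $\cV^\cX$, and the sentence after~(\ref{eq:z-monodromy_V}) recording the key identity $e^{-2\pi\iu\mu}e^{2\pi\iu\rho}=(-1)^nG^{\cV}([K_\cX])$ that makes the $z$-monodromy redundant once Galois invariance is imposed. Your write-up simply makes explicit the descent along $U\to U/H^2(\cX,\Z)$ and the translation of the pairing condition via~(\ref{eq:pairing_def_V}) and~(\ref{eq:pairing_Amodel_V}), which the paper leaves to the reader.
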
 

A real structure $R_\R$ on the quantum cohomology \seminf VHS 
induces the real subspace $\cH^\cX_\R$ of $\cH^\cX$ 
(see (\ref{eq:real_subspaces})):  
\begin{align*} 
\cH^\cX_\R := & \left\{\alpha(z) \in \cH^\cX \;;\; 
\begin{array}{l} 
\text{$L(\tau,\alpha) \alpha(z)$ 
belongs to the fiber of the }   \\ 
\text{$\R$-local system $R_\R$ 
at each $(\tau,z) \in U \times S^1$.}
\end{array} 
\right \}.   
\end{align*}
Note that for $\alpha(z) \in \cH^\cX$, 
$L(\tau,z)\alpha(z)$ is not necessarily 
a section of $R_\R|_{U\times S^1}$ 
since it may not be flat. 
($\hatnabla_{z\partial_z}$ may have monodromy.) 
Let $\kappa_\cH$ and $\kappa_\cV$ denote 
the real involutions of $\cH^\cX$ and $\cV^\cX$  
introduced in Section \ref{subsec:semi-inf_period}.   
We decompose the Galois action on $\cH^\cX$ as 
\[
G^{\cH}(\xi) = e^{-2\pi \iu \xi_0/z} G_0^{\cH}(\xi), \quad 
G_0^{\cH}(\xi) := \bigoplus_{v\in \sfT} e^{2\pi \iu f_v(\xi)}. 
\]
\begin{proposition}
\label{prop:basic_realstr} 
For any real structure on the 
quantum cohomology \seminf VHS $\cF$, 
the following holds. 
For a real class $\tau_{0,2}\in H^2(\cX,\R)$, we have 
\begin{gather}
\label{eq:H2_imaginary}
\kappa_\cH (\tau_{0,2}/z) + (\tau_{0,2}/z) \kappa_{\cH} =0, \quad 
\kappa_\cV \tau_{0,2} + \tau_{0,2} \kappa_\cV =0, \\  
\label{eq:G_0_real} 
G_0^{\cH}(\xi) \kappa_\cH = \kappa_\cH G_0^\cH(\xi), \\ 
(z\partial_z + \mu) \kappa_\cH + 
\label{eq:grading_imaginary_H} 
\kappa_{\cH} (z\partial_z +\mu) =0, \\
\label{eq:relation_kappa_H_V} 
\kappa_{\cH} = z^{-\mu} \kappa_{\cV} z^\mu, \quad 
\end{gather}  
where the last equality holds when we regard an element 
of $\cH^\cX$ as a $\cV^\cX$-valued function over $S^1=\{|z|=1\}$.   
Moreover, if $\cX$ satisfies the following condition: 
\begin{equation}
\label{eq:separation}
f_v(\xi) = f_{v'}(\xi), \ \forall \xi\in H^2(\cX,\Z) 
\Longrightarrow v=v',  
\end{equation} 
then we have 
\begin{equation}
\label{eq:kappa_induces_Inv}
\begin{split}
\kappa_\cH(H^*(\cX_v)\otimes \cO(\C^*)) 
&= H^*(\cX_{\inv(v)})\otimes \cO(\C^*), \\ 
\kappa_\cV(H^*(\cX_v)) & = H^*(\cX_{\inv(v)}).   
\end{split} 
\end{equation}
When (\ref{eq:kappa_induces_Inv}) holds, $\kappa_\cV$ satisfies
\begin{equation}
\label{eq:kappaV_weightfiltr}
\kappa_\cV(\alpha) \in  \cC(\alpha) + H^{>2k}(\cX_{\inv(v)}), \quad 
\alpha\in H^{2k}(\cX_v)
\end{equation} 
for some complex antilinear isomorphism 
$\cC:H^{2k}(\cX_v) \to H^{2k}(\cX_{\inv(v)})$. 
\end{proposition}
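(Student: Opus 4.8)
The plan is to push all the statements back to the finite-dimensional space $\cV^\cX$ of multi-valued flat sections, where the monodromy operators are genuine (finite) linear maps, deduce the relations there by a multiplicative Jordan decomposition argument, and then transport them to $\cH^\cX$ through (\ref{eq:relation_kappa_H_V}), which I would prove along the way. So the order is: $\cV$-versions first, then (\ref{eq:relation_kappa_H_V}), then the $\cH$-versions, then (\ref{eq:grading_imaginary_H}), then (\ref{eq:kappa_induces_Inv}), and finally (\ref{eq:kappaV_weightfiltr}).

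First, since $\cV^\cX_\R$ is invariant under the Galois action (Proposition \ref{prop:char_A_real_int_str}(ii)), $\kappa_\cV$ commutes with $G^\cV(\xi)$ for every $\xi\in H^2(\cX,\Z)$ (two antilinear maps agreeing on a real form coincide). Now $G^\cV(\xi)=G_0^\cV(\xi)\cdot e^{-2\pi\iu\xi_0}$ is exactly the multiplicative Jordan decomposition: $G_0^\cV(\xi)$ is semisimple (the scalar $e^{2\pi\iu f_v(\xi)}$ on $H^*(\cX_v)$), $e^{-2\pi\iu\xi_0}$ is unipotent because $\xi_0\cup$ is nilpotent, and the two commute. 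Conjugation by the antilinear involution $\kappa_\cV$ again produces a Jordan decomposition of $G^\cV(\xi)$ (semisimple stays semisimple, unipotent stays unipotent), so by uniqueness $\kappa_\cV$ commutes with $G_0^\cV(\xi)$ and with $e^{-2\pi\iu\xi_0}$ separately. Taking the nilpotent logarithm of the latter relation and using antilinearity gives $\kappa_\cV\circ(\xi_0\cup)\circ\kappa_\cV=-(\xi_0\cup)$; as the integral classes span $H^2(\cX,\Q)$ and the relation is $\R$-linear in the class, this is the $\cV$-half of (\ref{eq:H2_imaginary}) for every real $\tau_{0,2}$, in particular for $\rho$.

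Next I would establish (\ref{eq:relation_kappa_H_V}). Both $\kappa_\cH$ and $\kappa_\cV$ are, fibrewise over $|z|=1$, the transport of the conjugation on $R$ through the respective fundamental solutions $L(\tau,z)$ and $L(\tau,z)z^{-\mu}z^\rho$; comparing, one gets $\kappa_\cH=z^{-\mu}z^\rho\,\kappa_\cV\,z^{-\rho}z^\mu$ on $S^1$, and on $S^1$ the just-proved relation $\kappa_\cV(\rho\cup)+(\rho\cup)\kappa_\cV=0$ forces $z^\rho\kappa_\cV z^{-\rho}=\kappa_\cV$ (because $\log z$ is purely imaginary there), yielding (\ref{eq:relation_kappa_H_V}). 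Transporting the $\cV$-relations through (\ref{eq:relation_kappa_H_V}), using $z^\mu(\tau_{0,2}\cup)z^{-\mu}=z\,(\tau_{0,2}\cup)$ (which is $[\mu,\tau_{0,2}\cup]=\tau_{0,2}\cup$) and that $z^{\pm\mu}$ commutes with the scalars $G_0(\xi)$, gives the $\cH$-halves of (\ref{eq:H2_imaginary}) and (\ref{eq:G_0_real}). Then (\ref{eq:grading_imaginary_H}) drops out by combining the identity $\kappa_\cH\hatnabla_{z\partial_z}=-\hatnabla_{z\partial_z}\kappa_\cH$ from (\ref{eq:property_kappaH}) with $\hatnabla_{z\partial_z}=z\partial_z+\mu-\rho/z$ (see (\ref{eq:conn_z-direction})) and the case $\tau_{0,2}=\rho$ of (\ref{eq:H2_imaginary}). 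For (\ref{eq:kappa_induces_Inv}): $\kappa_\cV$ commutes with $G_0^\cV(\xi)$, which acts as the scalar $e^{2\pi\iu f_v(\xi)}$ on $H^*(\cX_v)$, so by antilinearity $\kappa_\cV$ sends $H^*(\cX_v)$ into the $e^{-2\pi\iu f_v(\xi)}$-eigenspace, i.e.\ into $\bigoplus_{w:\,f_w(\xi)=f_{\inv(v)}(\xi)}H^*(\cX_w)$ (using $f_{\inv(v)}(\xi)=\langle-f_v(\xi)\rangle$); intersecting over all $\xi$ and invoking (\ref{eq:separation}) gives $\kappa_\cV(H^*(\cX_v))\subseteq H^*(\cX_{\inv(v)})$, and applying $\kappa_\cV$ again upgrades this to equality. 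The same argument with $G_0^\cH(\xi)$ (using (\ref{eq:G_0_real})) gives the $\cH$-statement.

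Finally, for (\ref{eq:kappaV_weightfiltr}) I would fix an ample class $\omega$ on $\cX$; its class is the first Chern class of a line bundle, hence an image of an element of $H^2(\cX,\Z)$, so $\kappa_\cV(\omega\cup)+(\omega\cup)\kappa_\cV=0$. Under (\ref{eq:kappa_induces_Inv}) this says that $\tilde c:=\inv^*\circ\kappa_\cV\colon H^*(\cX_v)\to H^*(\cX_v)$ is an antilinear automorphism with $\tilde c\,L\,\tilde c^{-1}=-L$, where $L$ is cup product with $\omega|_{\cX_v}$ (using that $\inv\colon\cX_v\xrightarrow{\sim}\cX_{\inv(v)}$ identifies the restricted classes). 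Now $\omega|_{\cX_v}$ is ample (pullback along the finite map $\cX_v\to\cX$), $\cX_v$ has projective coarse space, so by Hard Lefschetz the monodromy weight filtration of $L$ is the cohomological degree filtration; since that of $-L$ is the same and the weight filtration is functorial under (anti)linear isomorphisms intertwining the nilpotents, $\tilde c$ preserves the degree filtration, and bijectivity then forces $\kappa_\cV$ (after undoing $\inv^*$) to be strictly filtered with an isomorphism $\cC$ on the associated graded. I expect this last step — extracting degree-filtration compatibility from a single Hard Lefschetz operator via the weight filtration — to be the crux; everything preceding it is formal manipulation with Jordan decompositions and the solution maps.
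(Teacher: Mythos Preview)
Your proof is correct, and for the last two items (\ref{eq:kappa_induces_Inv}) and (\ref{eq:kappaV_weightfiltr}) it is essentially identical to the paper's (the paper also invokes the weight filtration of the nilpotent $\omega\cup$ and Hard Lefschetz to identify it with the degree filtration). The difference lies in how you obtain (\ref{eq:H2_imaginary}) and (\ref{eq:G_0_real}). The paper works on $\cH^\cX$ first: since each $f_v(\xi)$ is rational, some power $(G_0^\cH(\xi))^m$ is the identity, so $(G^\cH(\xi))^m=e^{-2\pi\iu m\xi_0/z}$; reality of $G^\cH(\xi)$ then forces $\xi_0/z$ to be purely imaginary on $\cH^\cX$, and (\ref{eq:G_0_real}) follows from $G_0^\cH(\xi)=e^{2\pi\iu\xi_0/z}G^\cH(\xi)$. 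The $\cV$-statements and (\ref{eq:relation_kappa_H_V}) are then read off. You instead work on $\cV^\cX$ and use the uniqueness of the multiplicative Jordan decomposition of $G^\cV(\xi)$ (preserved under conjugation by the antilinear $\kappa_\cV$) to separate the semisimple and unipotent factors, then transport everything to $\cH^\cX$ via (\ref{eq:relation_kappa_H_V}). Your route is a bit more structural and does not use the rationality of the ages, while the paper's $m$-th power trick is shorter and stays on $\cH^\cX$ throughout; both reach the same conclusions with comparable effort.
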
 
\begin{proof} 
Because all the $f_v(\xi)$'s are rational numbers, 
we can find an integer $m>0$ such that 
$(G_0^\cH(\xi))^m=\id$. 
Then $(G^\cH(\xi))^m= e^{-2\pi\iu m \xi_0/z}$. 
Because the Galois action preserves the real structure, 
$\xi_0/z$ has to be purely imaginary on $\cH^\cX$. 
Hence $\tau_{0,2}/z$ is purely imaginary on $\cH^\cX$ for 
every $\tau_{0,2}\in H^2(\cX,\R)$.  
From this and (\ref{eq:solutionmap_z}), we find that 
the multiplication by $\tau_{0,2}$ is purely imaginary on $\cV^\cX$. 
Thus we have (\ref{eq:H2_imaginary}). 
From $G_0^{\cH}(\xi) = e^{2\pi\iu \xi_0/z}G^{\cH}(\xi)$, 
we have (\ref{eq:G_0_real}).  
Because $\hatnabla_{z\partial_z}=z\partial_z + \mu-\rho/z$ 
is purely imaginary on $\cH^\cX$ (\ref{eq:property_kappaH}) 
and so is $\rho/z$, 
we have (\ref{eq:grading_imaginary_H}).  
By (\ref{eq:solutionmap_z}), 
$\kappa_\cH$ and $\kappa_\cV$ are related by 
\[
\kappa_\cH = z^{-\mu} z^\rho \kappa_{\cV} z^{-\rho} z^{\mu}, 
\]
where $z$ is assumed to be in $S^1$ 
and both hand sides act on $\cV^\cX$-valued 
functions over $S^1$. 
Since $z^\rho = \exp(\rho \log z)$ 
is real on $\cV^\cX$ when $z\in S^1$, 
we have (\ref{eq:relation_kappa_H_V}). 
Under the condition (\ref{eq:separation}), 
the decomposition $\cH^\cX = \bigoplus_{v\in \sfT} 
H^*(\cX_v)\otimes \cO(\C^*)$ 
is the simultaneous eigenspace decomposition for $G_0^\cH(\xi)$, 
$\xi\in H_2(\cX,\Z)$.  
Therefore, (\ref{eq:kappa_induces_Inv}) follows 
from $\ov{e^{2\pi\iu f_v(\xi)}} = e^{2\pi\iu f_{\inv(v)}(\xi)}$ 
and the reality of $G^\cH_0(\xi)$. 
Let $\omega$ be a K\"{a}hler class on $\cX$. 
The action of $\omega$ on $H^*(\cX_v)$ is nilpotent. 
In general, a nilpotent operator $\omega$ on a vector space  
defines an increasing filtration $\{W_k\}_{k\in \Z}$ on it, 
called a {\it weight filtration}, 
which is uniquely determined by the conditions: 
\[
\omega W_k \subset W_{k-2}, \quad 
\omega^k \colon \Gr^W_{k} \cong \Gr^W_{-k} 
\]
where $\Gr^W_k = W_k/W_{k-1}$. 
By the Lefschetz decomposition, we know that 
$W_k = H^{\ge n_v-k}(\cX_v)$ in this case 
($n_v:=\dim_\C\cX_v$).  
Since $\kappa_\cV$ anti-commutes with $\omega$ 
by (\ref{eq:H2_imaginary}), $\kappa_\cV$ preserves this filtration. 
This shows (\ref{eq:kappaV_weightfiltr}). 
Here, $\cC$ is the isomorphism on the associated graded quotient 
induced from $\kappa_\cV$. 
\end{proof}

\subsection{Purity and polarization}
\label{subsec:pure_polarized} 
For an arbitrary real structure, 
we study a behavior of the quantum cohomology \seminf VHS $\cF$ near 
the large radius limit point (\ref{eq:largeradiuslimit}). 
We show that it is pure and polarized 
(in the sense of Definitions 
\ref{def:pure}, \ref{def:polarized}) 
under suitable conditions. 
Recall that when $\tcF\to U$ is pure,  
this defines a Cecotti-Vafa structure 
on the vector bundle $K\to U$ by Proposition \ref{prop:CV-str}. 

\begin{theorem}
\label{thm:pure_polarized} 
Let $\cX$ be a smooth Deligne-Mumford stack with 
a projective coarse moduli space. 
Let $\cF$ be the quantum cohomology \seminf VHS 
of $\cX$ and take a real structure on $\cF$. 
Let $\omega$ be a K\"{a}hler class on $\cX$. 

{\rm (i)} 
Assume that the real structure satisfies (\ref{eq:kappa_induces_Inv}). 
Then $\cF$ is pure at $\tau=-x\omega$ when 
the real part $\Re(x)$ is sufficiently big. 

{\rm (ii)}  
Assume moreover that the real structure satisfies 
(cf. (\ref{eq:kappaV_weightfiltr}))
\begin{align}
\label{eq:leadingterm_kappaV}
\begin{split}
&\kappa_\cV(\alpha) \in (-1)^k \R_{>0} \inv^*(\ov{\alpha}) 
+ H^{>2k}(\cX_{\inv(v)}), \\ 
\text{or equivalently} \quad &\kappa_\cH(\alpha) 
= (-1)^k\R_{>0} \inv^*(\ov{\alpha}) z^{-2k+n_v} 
+ O(z^{-2k+n_v-1})    
%\quad  
\end{split} 
\end{align} 
for $\alpha\in H^{2k}(\cX_v) \subset H_{\rm orb}^*(\cX)$, 
$n_v=\dim_\C\cX_v$. 
Then the Hermitian metric 
$h(\cdot,\cdot)=g(\kappa(\cdot),\cdot)$ 
on the vector bundle $K\to U$ satisfies 
\[
(-1)^{\frac{p-q}{2}} h(u,u) >0, 
\quad u\in H^{p,q}(\cX_v) \subset K_{-x\omega}, \quad u\neq 0 
\]
for sufficiently big $\Re(x)>0$, 
where we identify $K_\tau$ with 
$\tcF_\tau/z\tcF_\tau \cong H^*_{\rm orb}(\cX)$.  
In particular, if $H^*_{\rm orb}(\cX)$ 
consists only of the $(p,p)$ part, i.e. 
$H^{2p}(\cX_v) = H^{p,p}(\cX_v)$ for all $v\in \sfT$ and $p\ge 0$, 
then $\cF$ is polarized at $\tau=-x\omega$ 
for sufficiently big $\Re(x)>0$. 
\end{theorem}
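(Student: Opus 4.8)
The plan is to study the \seminf VHS near the large radius limit $\tau=-x\omega$, $\Re(x)\to\infty$, where the geometry becomes explicit up to exponentially small Gromov--Witten corrections. By Proposition~\ref{prop:fundamentalsol_A} together with Assumption~\ref{assump:convergence} one has, along $\tau=-x\omega$,
\[
L(\tau,z)^{-1}\phi_i=e^{-x\omega/z}\phi_i+O\bigl(e^{-\delta\Re(x)}\bigr)\qquad(z\in S^1)
\]
for some $\delta>0$ depending on $\omega$, so that $\tbF_\tau$ (Remark~\ref{rem:Cinfty}) has the $\cO(\D_0)$-basis $s_i:=L(\tau,z)^{-1}\phi_i$, an exponentially small perturbation on $S^1$ of $\{e^{-x\omega/z}\phi_i\}$, a basis of the cup-product (``classical'') subspace $\F^\circ_\tau:=e^{-x\omega/z}\bigl(H^*_{\rm orb}(\cX)\otimes\cO(\C)\bigr)$.

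The core computation is the leading behaviour of $\kappa_\cH$ on $e^{-x\omega/z}\phi$. Using $\kappa_\cH=z^{-\mu}\kappa_\cV z^\mu$ on $S^1$ (equation~(\ref{eq:relation_kappa_H_V})), the commutation relation $z^\mu e^{-x\omega/z}=e^{-x\omega}z^\mu$ (from $[\mu,\omega\cup]=\omega\cup$), and the fact that $\kappa_\cV$ anticommutes with multiplication by $\omega$ (equation~(\ref{eq:H2_imaginary})), one gets, for $\phi\in H^{2k}(\cX_v)$,
\[
\kappa_\cH\bigl(e^{-x\omega/z}\phi\bigr)=z^{-(k+\iota_v-n/2)}\,z^{-\mu}\bigl(e^{\ov{x}\omega}\kappa_\cV(\phi)\bigr).
\]
Feeding in $\kappa_\cV(\phi)\in\cC(\phi)+H^{>2k}(\cX_{\inv(v)})$ (equation~(\ref{eq:kappaV_weightfiltr}); under~(\ref{eq:leadingterm_kappaV}), $\cC(\phi)=(-1)^{k}c\,\inv^*(\ov{\phi})$ with $c\in\R_{>0}$) and $\iota_v+\iota_{\inv(v)}=n-n_v$, one finds $\kappa_\cH(e^{-x\omega/z}\phi)=z^{\,n_v-2k}\,e^{\ov{x}\omega/z}\,\cC(\phi)$ plus terms in which $\cC(\phi)$ is replaced by a class of strictly larger cohomological degree in $H^*(\cX_{\inv(v)})$, carrying strictly more negative powers of $z$. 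Writing $e^{\ov{x}\omega/z}=e^{-x\omega/z}e^{2\Re(x)\omega/z}$ and re-expressing in the basis $\{e^{-x\omega/z}\phi_j\}$, the transition matrix $A(z)$ of Remark~\ref{rem:Birkhoff_Iwasawa} (defined by $\kappa_\cH(s_i)=\sum_j s_jA_{ji}(z)$) has classical part $A^\circ=z^{-\Lambda}Mz^{-\Lambda}$, where $\Lambda=\operatorname{diag}(k_i-n_{v_i}/2)$ and $M$ is a constant invertible matrix exchanging $\cX_v\leftrightarrow\cX_{\inv(v)}$, respecting the Lefschetz weight filtration $W_m=H^{\ge n_v-m}(\cX_v)$ with induced automorphism of $\Gr^W$ given by $\cC$, its sub-filtration entries produced by powers of $\omega\cup$.

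For (i) (purity) it suffices, by Remark~\ref{rem:Birkhoff_Iwasawa}, to show $A(z)$ lies in the big cell of $LGL_N(\C)$. The classical part $A^\circ$ alone need not lie there (for $\Proj^1$ it sits in the $z^{\operatorname{diag}(1,-1)}$-stratum), so one must bring in the leading Gromov--Witten correction to $L$ — carried by the class $d\in\Eff_\cX\setminus\{0\}$ of minimal $\pair{\omega}{d}$, and governed via the string and divisor equations by low-degree two-point invariants — and check that it fills the entries of $A(z)$ lying below the Lefschetz weight triangle, moving $A(z)$ off the closed stratum into the dense big cell; organising $A^\circ$ by the hard Lefschetz ($\mathfrak{sl}_2$-)decomposition of each $H^*(\cX_v)$ makes this transparent (the quantum correction supplies the ``lowering'' direction missing from the classical $\omega\cup$), and the big cell being open, $A(z)$ remains in it for $\Re(x)$ large. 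For (ii) (polarization), via the canonical identification $\Phi_\tau\colon K_\tau\cong\F_\tau\cap\kappa_\cH(\F_\tau)$ of Proposition~\ref{prop:CV-str} and the Birkhoff factorisation of $A(z)$, I would compute $h(u_1,u_2)=\bigl(\kappa_\cH\Phi_\tau(u_1),\Phi_\tau(u_2)\bigr)_{\cH^\cX}$ to leading order: extracting the constant term of the pairing reduces $h(u,u)$, for $u\in H^{p,q}(\cX_v)$, to a sum of terms $\int_{\cX_v}\omega^{\ell}\cup u\cup\ov{u}$ weighted by positive powers of $\Re(x)$ and by $c>0$, and the Hodge--Riemann bilinear relations for the primitive (Lefschetz) components of $u$ give $(-1)^{(p-q)/2}h(u,u)>0$, the sign being produced jointly by the $(-1)^{k}$ of~(\ref{eq:leadingterm_kappaV}) and the Hodge--Riemann signs. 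When $H^*_{\rm orb}(\cX)=\bigoplus_p H^{p,p}$ this is $h(u,u)>0$, i.e.\ $\cF$ is polarized.

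The step I expect to be the main obstacle is making the last two paragraphs rigorous: one must control simultaneously the weight filtration attached to $\omega\cup$, the cohomological grading measured by $\mu$, the powers of $z$, and the large parameter $\Re(x)$ in $e^{\pm x\omega/z}$; and, since the cup-product limit $\F^\circ_\tau$ is itself \emph{not} pure in general, one has to pin down the leading Gromov--Witten correction precisely enough to see that it pushes $A(z)$ into the big cell and that the induced metric is positive. In effect this is a nilpotent-orbit limiting argument carrying an extra grading, with limit data the classical cohomology of $\cX$ equipped with the nilpotent $\omega\cup$, whose polarization rests on the Hodge--Riemann relations for the components $\cX_v$ — hence the restriction to the algebraic category and, for full polarization, the hypothesis $H^*_{\rm orb}(\cX)=\bigoplus_p H^{p,p}$; it is the extension, to arbitrary $\omega$ and to orbifolds, of the Fano/$c_1$ case that follows from \cite[Theorem~7.3]{hertling-sevenheck}.
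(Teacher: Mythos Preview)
Your proposal contains a genuine gap in the mechanism for purity. You assert that the classical (nilpotent orbit) transition matrix $A^\circ$ need not lie in the big cell---giving $\Proj^1$ as an example---and that the leading Gromov--Witten correction is what pushes $A(z)$ into it. This is backwards. The nilpotent orbit $e^{-x\omega/z}\Flim$ \emph{is} pure for $\Re(x)\gg 0$, and this is the heart of the argument; the quantum corrections are then an exponentially small perturbation handled by openness of the big cell. In the $\Proj^1$ case of Section~\ref{sec:exampleP1tt*} one computes directly that the classical matrix $A_t=\begin{bmatrix} z & 0\\ a_\tau & -z^{-1}\end{bmatrix}$ with $a_\tau=2\Re(x)-4\gamma$ admits the Birkhoff factorization $B_tC_t$ whenever $a_\tau\neq 0$, so it sits in the big cell for all large $\Re(x)$; the $z^{\operatorname{diag}(1,-1)}$-stratum you mention only occurs at the isolated value $a_\tau=0$.

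The error stems from freezing $M$ in your formula $A^\circ=z^{-\Lambda}Mz^{-\Lambda}$ to be independent of $t=\Re(x)$. In fact the off-diagonal entries produced by $e^{2t\omega/z}$ grow polynomially in $t$, and it is precisely this $t$-dependence that forces purity. The paper's Step~1 makes this rigorous by rescaling: conjugating by $(2t)^{-\deg/2}$ turns $\kappa_\cV$ into $\kappa_t=\cC+O(t^{-1})$ and $e^{2t\omega}$ into $e^{\fra}$, so the question becomes whether $H^{\le n_v-k}(\cX_v)\cap e^{\fra}H^{\le n_v+k}(\cX_v)\to H^{n_v-k}(\cX_v)$ is an isomorphism. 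This is pure Lefschetz $\mathfrak{sl}_2$: one introduces the lowering operator $\fra^\dagger$ and checks (Lemma~\ref{lem:expa_expadagger}) that $e^{-\fra}e^{\fra^\dagger}$ sends $H^{\ge n_v-k}$ isomorphically to $H^{\le n_v+k}$, whence the inverse of the map above is $u\mapsto e^{\fra^\dagger}u$. No Gromov--Witten input enters. Step~2 then writes $Q_x^{-1}A_t\ov{Q}_x=B_t(\unit+O(e^{-\epsilon t}))C_t$ and factors the middle piece by continuity of Birkhoff factorization. Your plan to identify and control the minimal-degree two-point invariants would be hopeless in general and is unnecessary; once Step~1 is in place, your outline of the polarization via Hodge--Riemann is essentially correct and matches Proposition~\ref{prop:nilpotentorb_pure}.
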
 

\begin{remark}
\label{rem:after_pure_polarized} 
(i) The condition (\ref{eq:kappa_induces_Inv}) is satisfied 
when $\cX$ has enough line bundles to separate the inertia components 
(see (\ref{eq:separation}) in Proposition \ref{prop:basic_realstr}). 
In particular, (\ref{eq:kappa_induces_Inv}) is always satisfied when 
$\cX$ is a manifold.  

(ii) 
We can consider the 
\emph{algebraic quantum cohomology \seminf VHS}.  
Let $A^*(\cX)_\C$ denote the Chow ring of $\cX$ over $\C$. 
We set $\HH^*(\cX_v) := \Image(A^*(\cX_v)_\C \to H^*(\cX_v))$ and 
define $\HH_{\rm orb}^*(\cX) := \bigoplus_{v\in \sfT} \HH^*(\cX_v)$. 
The algebraic quantum cohomology \seminf VHS is defined to be 
\[
\HH^*_{\rm orb}(\cX) \otimes 
\pi_* \cO_{(U\cap \HH_{\rm orb}(\cX))\times \C} 
\]
with the restriction of Dubrovin connection, the grading operator 
and pairing,   
modulo the Galois action given by an element of $\Pic(\cX)$. 
Here we used the fact that the quantum product 
among classes in $\HH_{\rm orb}^*(\cX)$ 
again belongs to $\HH_{\rm orb}^*(\cX)$.  
This follows from the algebraic construction 
of orbifold Gromov-Witten theory \cite{AGV}.  
When we assume the Hodge conjecture for all $\cX_v$, 
each $\HH^*(\cX_v)$ has the Poincar\'{e} duality and
the orbifold Poincar\'{e} pairing is non-degenerate 
on $\HH_{\rm orb}^*(\cX)$. 
Under this assumption, 
\emph{the algebraic quantum cohomology \seminf VHS 
is pure and polarized at $\tau=-x\omega$ for  
a K\"{a}hler class $\omega\in \HH^2(\cX)$ and $\Re(x)\gg 0$}  
if the conditions corresponding to (\ref{eq:kappa_induces_Inv}) and 
(\ref{eq:leadingterm_kappaV}) are satisfied. 
The proof below applies to the algebraic quantum cohomology 
\seminf VHS without change. 
Note that the Poincar\'{e} duality of $\HH^*(\cX_v)$ also 
implies the Hard Lefschetz of it
used in the proof below.  
\end{remark} 

\begin{remark} 
Hertling \cite{hertling-tt*} 
and Hertling-Sevenheck \cite{hertling-sevenheck} 
studied similar problems for general TERP structures. 
They considered the change of TERP structures induced 
by the rescaling $z \mapsto r z$ of the parameter $z$. 
This rescaling with $r \to \infty$ 
is called Sabbah orbit in \cite{hertling-sevenheck} 
and is equivalent to the flow of minus the Euler vector field: 
$\tau \mapsto \tau - \rho \log r$ for $\tau \in H^2(\cX)$. 
When $\cX$ is Fano and $\omega=c_1(\cX)=\rho$, 
the large radius limit corresponds to 
the Sabbah orbit\footnote{
The author thanks 
Claus Hertling for this remark.}, 
and the conclusions 
in Theorem \ref{thm:pure_polarized} can be deduced from 
\cite[Theorem 7.3]{hertling-sevenheck} in this case. 
\end{remark} 

\begin{remark}
\label{rem:sabbah} 
Singularity theory gives a \seminf VHS with a real structure. 
According to the recent work of Sabbah \cite[Section 4]{sabbah}, 
the \seminf VHS arising from a cohomologically tame function 
on an affine manifold is pure and polarized.  
In Section \ref{sec:exampleP1tt*}, we use this result 
to see that the $tt^*$-geometry of $\Proj^1$ 
is pure and polarized everywhere. 
\end{remark} 

The rest of this section is 
devoted to the proof of Theorem \ref{thm:pure_polarized}. 

From Equation (\ref{eq:fundamentalsol_A}), 
we see that 
$e^{x\omega/z} \J_{-x\omega}(\varphi) \to  \varphi$ 
as $\Re(x)\to \infty$. 
Thus, in the moving subspace realization, 
the Hodge structure 
$\F_{-x\omega} = \J_{-x\omega}(\cF_{-x\omega})$ 
has the asymptotics: 
\[
\F_{-x \omega} \sim e^{-x\omega/z}\F_{\rm lim} \quad 
\text{as $\Re(x)\to \infty$}, 
\]
where $\Flim:= H^*_{\rm orb}(\cX)\otimes \cO(\C)$ 
is the limiting Hodge structure. 
This is an analogue of the {\it nilpotent orbit theorem} 
\cite{schmid}  
in quantum cohomology. 
First we study the behavior of the nilpotent orbit 
$x\mapsto e^{-x\omega/z} \Flim$ for $\Re(x)\gg 0$ 
(see Proposition \ref{prop:nilpotentorb_pure} below). 

{\bf (Step 1)} 
We study the purity of the \seminf VHS 
$x\mapsto e^{-x\omega/z} \Flim$, 
\emph{i.e.} if the natural map 
\begin{equation}
\label{eq:nilpotentorb_pure}
e^{-x\omega/z} \Flim \cap \kappa_{\cH}(e^{-x\omega/z}\Flim) 
\longrightarrow e^{-x\omega/z} (\Flim/z\Flim) 
\cong e^{-x\omega/z}H^*_{\rm orb}(\cX)  
\end{equation} 
is an isomorphism (see (\ref{eq:FcapkappaF}) 
in Proposition \ref{prop:purity_condition}). 
Under the condition (\ref{eq:kappa_induces_Inv}), 
this is equivalent to that the map   
\[
e^{-x\omega/z} H^*( \cX_v)\{z\} \cap 
\kappa_\cH (e^{-x\omega/z} H^*( \cX_{\inv(v)} )\{z\}) 
\to   e^{-x\omega/z} H^*( \cX_v)   
\]
is an isomorphism for each $v\in \sfT$. 
Here we put $H^*(\cX_v)\{z\} := H^*(\cX_v) \otimes \cO(\C)$. 
Since $\kappa_\cH e^{-x\omega/z} = e^{\ov{x} \omega/z} \kappa_\cH$ 
(see (\ref{eq:H2_imaginary})), this is equivalent to that 
\[
H^*( \cX_v)\{z\} \cap 
e^{2t\omega/z} 
\kappa_\cH(H^*( \cX_{\inv(v)})\{z\}) \to H^*( \cX_{v}), 
\quad t:=\Re(x)
\]
is an isomorphism. 
We further decompose this into $(z\partial_z + \mu)$-eigenspaces. 
Because $z\partial_z +\mu$ 
is purely imaginary (\ref{eq:grading_imaginary_H}), 
the above map between the $(z\partial_z+\mu)$-eigenspaces  
of the eigenvalue 
$\frac{1}{2}(-k+\age(v)-\age(\inv(v)) )$ is of the form: 
\[
\left
(\bigoplus_{l\ge 0} H^{n_v-k-2l}( \cX_v) z^l
\right) 
\cap 
e^{2t\omega/z}\kappa_{\cH}
\left(\bigoplus_{l\ge 0} H^{n_v+k-2l}( \cX_{\inv(v)}) z^l
\right) 
\to  H^{n_v-k}(\cX_v).  
\]
Here, $n_v=\dim_\C \cX_v$ and $k$ is an integer such that 
$n_v-k$ is even. 
By using (\ref{eq:relation_kappa_H_V}), 
we find that this map is conjugate 
(via $z^{\mu+(k-\iota_v+\iota_{\inv(v)})/2}$) 
to the following map: 
\begin{equation}
\label{eq:kappaV_transversality}
H^{\le n_v-k}(\cX_v) \cap e^{2 t\omega} 
\kappa_\cV(H^{\le n_v+k}(\cX_{\inv(v)})) \to H^{n_v-k}(\cX_v)   
\end{equation} 
which is induced by 
$H^{\le n_v-k}(\cX_v) \to 
H^{\le n_v-k}(\cX_v)/H^{\le n_v-k-2}(\cX_v) 
\cong H^{n_v-k}(\cX_v)$. 
We will show that this becomes an isomorphism for 
$t=\Re(x)\gg 0$ 
in Lemma \ref{lem:nilpotentorb_pure} below. 
 
Let $\fra \colon H^*(\cX_v) \to H^{*+2}(\cX_v)$ 
be the operator defined by $\fra(\phi):=\omega\cup \phi$. 
There exists an operator 
$\fra^\dagger \colon H^*(\cX_v) \to H^{*-2}(\cX_v)$ 
such that $\fra$ and $\fra^\dagger$ generate
the Lefschetz $\mathfrak{sl}_2$-action on $H^*(\cX_v)$:  
\[
[\fra, \fra^\dagger] = h, \quad 
[h,\fra] = 2\fra, \quad [h,\fra^\dagger]
=-2\fra^\dagger,    
\]
where $h: = \deg - n_v$ is the (shifted) grading operator. 
Note that $\fra^\dagger$ is uniquely determined by the 
above commutation relation and that $\fra^\dagger$ annihilates 
the primitive cohomology 
$PH^{n_v-k}(\cX_v):= \Ker(\fra^{k+1} \colon H^{n_v-k}(\cX_v) 
\to H^{n_v+k+2}(\cX_v))$. 

\begin{lemma}
\label{lem:expa_expadagger}
The map $e^{-\fra} e^{\fra^\dagger} \colon H^*(\cX_v) \to H^*(\cX_v)$ 
sends $H^{\ge n_v-k}(\cX_v)$ onto $H^{\le n_v+k}(\cX_v)$  
isomorphically. Moreover, for 
$u \in \fra^{j} PH^{n_v-k-2j}(\cX_v)\subset H^{n_v-k}(\cX_v)$, 
one has 
\[
e^{-\fra} e^{\fra^\dagger} u = (-1)^{k+j} \frac{j!}{(k+j)!}
\omega^k u + H^{< n_v+k}(\cX_v). 
\]
\end{lemma}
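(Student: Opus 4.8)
The assertion is entirely a statement about the Lefschetz $\mathfrak{sl}_2$-action on $H^*(\cX_v)$, so the plan is to reduce to a single irreducible summand and compute there. First I would invoke the Lefschetz decomposition to write $H^*(\cX_v)$ as a direct sum of irreducible $\mathfrak{sl}_2$-submodules, each of the form $V_p=\bigoplus_{\ell=0}^{m}\C\,\fra^\ell p$ for a primitive class $p\in PH^{n_v-m}(\cX_v)$; in $V_p$ the class $\fra^\ell p$ lies in cohomological degree $n_v-m+2\ell$, hence has $h$-weight $-m+2\ell$, so $p$ is the lowest-weight vector and $\fra^m p$ the highest. Since $e^{-\fra}$ and $e^{\fra^\dagger}$ are invertible (with inverses $e^{\fra}$ and $e^{-\fra^\dagger}$) and each preserves this decomposition into $\mathfrak{sl}_2$-submodules, the operator $g:=e^{-\fra}e^{\fra^\dagger}$ restricts to an automorphism of every $V_p$, and it suffices to analyse $g$ on one such $V_p\cong\Sym^m(\C^2)$.

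On the defining representation $\C^2=\langle e_1,e_2\rangle$, with $\fra$ raising and $\fra^\dagger$ lowering the weight, one has $e^{-\fra}=\bigl(\begin{smallmatrix}1&-1\\0&1\end{smallmatrix}\bigr)$ and $e^{\fra^\dagger}=\bigl(\begin{smallmatrix}1&0\\1&1\end{smallmatrix}\bigr)$, hence $g=\bigl(\begin{smallmatrix}0&-1\\1&1\end{smallmatrix}\bigr)=w\,e^{\fra}$, where $w=\bigl(\begin{smallmatrix}0&-1\\1&0\end{smallmatrix}\bigr)$ represents the nontrivial Weyl element. On $\Sym^m(\C^2)$ the factor $e^{\fra}$ preserves the increasing filtration by $h$-weight ($\fra$ raises the weight by $2$), while $w$ interchanges the weight-$a$ and weight-$(-a)$ eigenspaces. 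Therefore $g$ carries $\{\,\text{weight}\ge -k\,\}$ isomorphically onto $\{\,\text{weight}\le k\,\}$ in each $V_p$ (the two have the same dimension since the weight diagram of $V_p$ is symmetric and $g$ is injective). As $\deg=n_v+(\text{weight})$ on $V_p$, summing over all summands shows that $e^{-\fra}e^{\fra^\dagger}$ sends $H^{\ge n_v-k}(\cX_v)$ isomorphically onto $H^{\le n_v+k}(\cX_v)$.

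For the leading-term formula I would take $u\in\fra^j PH^{n_v-k-2j}(\cX_v)$, and by linearity reduce to $u=\fra^j p$ for a single primitive $p$: then $u$ lies in the summand $V_p$ of highest weight $m=k+2j$ and is, up to a nonzero scalar $c$, the monomial $e_1^{j}e_2^{k+j}$, of cohomological degree $n_v-k$ (i.e.\ $h$-weight $-k$). Using $g e_1=e_2$ and $g e_2=-e_1+e_2$ on $\Sym^m(\C^2)$ gives $g(e_1^{j}e_2^{k+j})=e_2^{j}(-e_1+e_2)^{k+j}$, and expanding by the binomial theorem the term of top cohomological degree $n_v+k$ (equivalently $h$-weight $k$) is the single monomial $(-1)^{k+j}e_1^{k+j}e_2^{j}$, all other terms lying in $H^{<n_v+k}(\cX_v)$. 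Comparing with $\omega^k u=\fra^k u$, which equals the same multiple $c$ of $\fra^k(e_1^{j}e_2^{k+j})=\tfrac{(k+j)!}{j!}e_1^{k+j}e_2^{j}$, yields the coefficient $(-1)^{k+j}\tfrac{j!}{(k+j)!}$ and hence the claimed expansion.

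The only delicate point — and the place where a sign or a factorial could go astray — is the simultaneous bookkeeping of three gradings on $V_p$: the cohomological degree, the $\mathfrak{sl}_2$-weight $h=\deg-n_v$, and the exponents in the monomial basis of $\Sym^m(\C^2)$, together with the normalization of the vectors $\fra^\ell p$. In particular one must keep in mind that the representation-theoretic highest-weight vector is the top Lefschetz class $\fra^m p$, not the primitive class $p$, so that it is $e^{\fra}$ rather than $e^{\fra^\dagger}$ that furnishes the unipotent factor preserving the relevant filtration; once this is straight, the remaining computation is a one-line binomial identity.
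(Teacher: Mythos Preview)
Your argument is correct but proceeds by a genuinely different route from the paper's proof. The paper establishes the first claim via the intertwining identity
\[
e^{-\fra} e^{\fra^\dagger}\,\fra \;=\; -\,\fra^\dagger\, e^{-\fra} e^{\fra^\dagger},
\]
which immediately shows that $e^{-\fra}e^{\fra^\dagger}$ carries the weight filtration for the nilpotent operator $\fra$ (namely $\{H^{\ge n_v-k}\}_k$) to that for $\fra^\dagger$ (namely $\{H^{\le n_v+k}\}_k$); the leading term is then extracted by writing $u=\fra^j\phi$ with $\phi$ primitive, using $\fra^\dagger\phi=0$ to get $e^{-\fra}e^{\fra^\dagger}u=(-\fra^\dagger)^j e^{-\fra}\phi$, picking off the top-degree piece $\frac{(-1)^{k+2j}}{(k+2j)!}\fra^{k+2j}\phi$, and applying the standard $\mathfrak{sl}_2$ formula $\fra^\dagger\fra^l\phi = l(k+2j+1-l)\fra^{l-1}\phi$. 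You instead work in the concrete model $\Sym^m(\C^2)$ of each Lefschetz summand, factor $e^{-\fra}e^{\fra^\dagger}=w\,e^{\fra}$ with $w$ the Weyl element, and read off both the filtration statement and the coefficient by a binomial expansion. The paper's approach is more intrinsic (no choice of basis, and the intertwining relation in fact encodes your Weyl-element factorisation), while yours is more explicit and makes the combinatorics transparent; either method yields the lemma with the same amount of work.
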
 
\begin{proof}
An easy calculation shows that 
\[
e^{-\fra} e^{\fra^\dagger} \fra= 
- \fra^\dagger e^{-\fra} e^{\fra^\dagger}.  
\]
Therefore, $e^{-\fra}e^{\fra^\dagger}$ should send 
the weight filtration for the nilpotent operator 
$\fra$ to that for $\fra^\dagger$.  
But the weight filtration for $\fra$ is 
$\{H^{\ge n_v-k}\}_k$ 
and that for $\fra^\dagger$ is $\{H^{\le n_v+k}\}_k$ 
(see the proof of Proposition \ref{prop:basic_realstr} 
for weight filtration).  
Take $u \in \fra^j PH^{n_v-k-2j}(\cX_v)$. 
Put $u = \fra^j \phi$ for $\phi \in PH^{n_v-k-2j}(\cX_v)$. 
We calculate 
\begin{align*}
e^{-\fra}e^{\fra^\dagger} u  &= e^{-\fra}e^{\fra^\dagger} \fra^j \phi 
= (-\fra^\dagger)^j e^{-\fra}e^{\fra^\dagger} \phi 
=  (-\fra^\dagger)^j e^{-\fra} \phi \\
&= (-\fra^\dagger)^j \frac{(-1)^{k+2j}}{(k+2j)!} \fra^{k+2j} \phi  
+ \text{ lower degree term},  
\end{align*} 
where in the second line we used that 
$e^{-\fra}e^{\fra^\dagger}u \in H^{\le n_v+k}(\cX_v)$. 
Using $\fra^\dagger \fra^l u = l(k+2j+1-l) \fra^{l-1} u$, 
we arrive at the formula for $e^{-\fra}e^{\fra^\dagger}u$.  
\end{proof}

\begin{lemma}
\label{lem:nilpotentorb_pure} 
The map (\ref{eq:kappaV_transversality}) is an isomorphism 
for sufficiently big $t>0$. 
Moreover, $u\in H^{n_v-k}(\cX_v)$ corresponds to 
an element of the form 
\[
(2t)^{(\deg +k-n_v)/2}(e^{\fra^\dagger} u + O(t^{-1}))  
\in H^{\le n_v-k}(\cX_v)  \cap e^{2\omega t}
\kappa_\cV(H^{\le n_v+k}(\cX_{\inv(v)})) 
\] 
under (\ref{eq:kappaV_transversality}), where $(2t)^{\deg/2}$ 
is defined by $(2t)^{\deg/2} = (2t)^k$ on $H^{2k}(\cX_v)$. 
\end{lemma}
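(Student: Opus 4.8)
The plan is to rescale the cohomological grading so that the $t$-dependence of the intersection in (\ref{eq:kappaV_transversality}) degenerates, reducing it to a $t$-independent ``model'' intersection which is controlled directly by Lemma \ref{lem:expa_expadagger}, and then to transport the model statement to $t\gg 0$ by a semicontinuity argument. First I would introduce the rescaling operator $\delta_t$ acting on $H^*(\cX_v)$ (and on $H^*(\cX_{\inv(v)})$) by multiplication by $(2t)^m$ on $H^{2m}$. Since $\delta_t^{-1}\fra\,\delta_t=(2t)^{-1}\fra$ one has $\delta_t^{-1}e^{2t\fra}\delta_t=e^{\fra}$ (with $e^{2t\omega}=e^{2t\fra}$), while $\delta_t$ preserves every $H^{\le m}$; hence conjugating (\ref{eq:kappaV_transversality}) by $\delta_t$ replaces it by the map
\[
H^{\le n_v-k}(\cX_v)\ \cap\ e^{\fra}\,(\delta_t^{-1}\kappa_\cV\delta_t)\bigl(H^{\le n_v+k}(\cX_{\inv(v)})\bigr)\ \longrightarrow\ H^{n_v-k}(\cX_v).
\]
Because $\delta_t$ is real and positive and $\kappa_\cV$ is antilinear, $\delta_t^{-1}\kappa_\cV\delta_t=\sum_{j\ge 0}(2t)^{-j}\kappa_\cV^{(j)}$, where $\kappa_\cV^{(j)}$ is the degree-$(+2j)$ homogeneous component of $\kappa_\cV$; this is polynomial in $t^{-1}$ and tends to $\kappa_\cV^{(0)}$ as $t\to\infty$. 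By (\ref{eq:kappaV_weightfiltr}) the diagonal part $\kappa_\cV^{(0)}$ is degree-preserving with invertible graded pieces $\cC$, hence an isomorphism, so $\kappa_\cV^{(0)}(H^{\le n_v+k}(\cX_{\inv(v)}))=H^{\le n_v+k}(\cX_v)$.

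Next I would treat the model ($t=\infty$) case, namely the intersection $H^{\le n_v-k}(\cX_v)\cap e^{\fra}H^{\le n_v+k}(\cX_v)$. By Lemma \ref{lem:expa_expadagger}, $e^{-\fra}e^{\fra^\dagger}$ maps $H^{\ge n_v-k}(\cX_v)$ onto $H^{\le n_v+k}(\cX_v)$, whence $e^{\fra}H^{\le n_v+k}(\cX_v)=e^{\fra}e^{-\fra}e^{\fra^\dagger}H^{\ge n_v-k}(\cX_v)=e^{\fra^\dagger}H^{\ge n_v-k}(\cX_v)$. Since $\fra^\dagger$ strictly lowers degree, comparing highest-degree components shows at once that $H^{\le n_v-k}(\cX_v)\cap e^{\fra^\dagger}H^{\ge n_v-k}(\cX_v)=e^{\fra^\dagger}H^{n_v-k}(\cX_v)$ and that the top-graded-piece projection restricts on it to the isomorphism $e^{\fra^\dagger}u\leftrightarrow u$. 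This proves the lemma for $t=\infty$ in the rescaled picture.

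To pass to $t\gg 0$ I would combine a dimension count with semicontinuity. For any subspace $B\subset H^*(\cX_v)$ with $\dim B=\dim H^{\le n_v+k}(\cX_{\inv(v)})$, the elementary inequality $\dim(A\cap B)\ge\dim A+\dim B-\dim H^*(\cX_v)$ together with Poincaré duality on $\cX_v$ (and $\cX_v\cong\cX_{\inv(v)}$) gives $\dim(H^{\le n_v-k}(\cX_v)\cap B)\ge\dim H^{n_v-k}(\cX_v)$. Applying this to the real-analytic family $B_t:=e^{\fra}(\delta_t^{-1}\kappa_\cV\delta_t)(H^{\le n_v+k}(\cX_{\inv(v)}))$ of subspaces of constant dimension, and using upper semicontinuity of the dimension of the intersection together with the model computation (which gives dimension exactly $\dim H^{n_v-k}(\cX_v)$ at $t=\infty$), I conclude that the intersection has constant dimension $\dim H^{n_v-k}(\cX_v)$ for $t$ large; then $t\mapsto H^{\le n_v-k}(\cX_v)\cap B_t$ is a continuous family of subspaces, and its top-piece projection to $H^{n_v-k}(\cX_v)$, being an isomorphism at $t=\infty$, is an isomorphism for all $t\gg 0$ by lower semicontinuity of the rank. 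Finally, undoing the conjugation by $\delta_t$ and tracking the scalar $(2t)^{-(n_v-k)/2}$ by which $\delta_t^{-1}$ rescales the target $H^{n_v-k}(\cX_v)$ converts the rescaled statement ``the preimage of $u$ is $e^{\fra^\dagger}u+O(t^{-1})$'' into the asserted form $(2t)^{(\deg+k-n_v)/2}\bigl(e^{\fra^\dagger}u+O(t^{-1})\bigr)$.

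The step I expect to be the main obstacle is this last passage from $t=\infty$ to $t\gg 0$: ruling out a jump of the intersection dimension near infinity and keeping the projection surjective. This is precisely where projectivity enters, through Poincaré duality / Hard Lefschetz on the inertia components $\cX_v$ (already used in Lemma \ref{lem:expa_expadagger}); for the algebraic \seminf VHS of Remark \ref{rem:after_pure_polarized} one would instead invoke the Hodge conjecture for the $\cX_v$.
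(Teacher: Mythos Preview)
Your proposal is correct and follows essentially the same route as the paper: rescale by $(2t)^{\deg/2}$ (your $\delta_t$, the paper's $(2t)^{-\deg/2}$ conjugation giving $\kappa_t$) so that $e^{2t\omega}$ becomes $e^{\fra}$ and $\kappa_\cV$ degenerates to the degree-preserving limit $\cC$, settle the model case $t=\infty$ via Lemma \ref{lem:expa_expadagger}, and transfer to $t\gg 0$ by the open/semicontinuity argument using the Poincar\'{e}-duality dimension count. The only stylistic difference is that the paper dispatches the model intersection by first applying $e^{-\fra^\dagger}$ to reduce to $H^{\le n_v-k}\cap H^{\ge n_v-k}=H^{n_v-k}$, whereas you argue directly that $H^{\le n_v-k}\cap e^{\fra^\dagger}H^{\ge n_v-k}=e^{\fra^\dagger}H^{n_v-k}$; these are equivalent.
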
 
\begin{proof}
First we rescale (\ref{eq:kappaV_transversality}) by 
$(2t)^{-\deg/2}$:  
\[
\begin{CD}
H^{\le n_v-k}(\cX_v) \cap e^{2\omega t}
\kappa_\cV(H^{\le n_v+k}(\cX_{\inv(v)}))  @>>> H^{n_v-k}(\cX_v)  \\ 
@VV{(2t)^{-\deg/2}}V @VV{(2t)^{-\deg/2}}V \\ 
H^{\le n_v-k}(\cX_v) \cap e^{\omega} 
\kappa_t (H^{\le n_v+k}(\cX_{\inv(v)})) @>>> H^{n_v-k}(\cX_v), 
\end{CD} 
\]
where $\kappa_t := (2t)^{-\deg/2} \kappa_\cV (2t)^{\deg/2}$. 
Since the column arrows are isomorphisms for all $t\in \R$, 
it suffices to show that the bottom arrow is an isomorphism 
for $t\gg 0$.  
Observe that the expected dimension of 
$H^{\le n_v-k}(\cX_v) \cap e^{2\omega} 
\kappa_t (H^{\le n_v+k}(\cX_{\inv(v)}))$ equals 
$\dim H^{n_v-k}(\cX_v)$ by Poincar\'{e} duality. 
Thus that the bottom arrow becomes an isomorphism 
is an open condition for $\kappa_t$. 
By (\ref{eq:kappaV_weightfiltr}) 
in Proposition \ref{prop:basic_realstr}, 
we have 
\begin{equation}
\label{eq:kappat_goestoC}
\kappa_t = \cC + O(t^{-1}), 
\end{equation} 
for a degree preserving isomorphism 
$\cC\colon H^*(\cX_{\inv(v)})\cong H^*(\cX_v)$. 
Therefore, we only need to check that the map at $t=\infty$ 
\begin{equation}
\label{eq:transversality_tinfinity}
H^{\le n_v-k}(\cX_v) \cap e^{\fra} H^{\le n_v+k}(\cX_v) 
\to H^{n_v-k}(\cX_v) 
\end{equation}
is an isomorphism (recall that $\fra=\omega\cup$). 
Note that this factors through $\exp(-\fra^\dagger)$ as  
\[
\begin{CD}
H^{\le n_v-k} \cap e^{\fra} H^{\le n_v+k}
@>{\exp(-\fra^\dagger)}>> 
H^{\le n_v-k}\cap e^{-\fra^\dagger}e^{\fra}  
H^{\le n_v+k} 
@>>> H^{n_v-k}, 
\end{CD} 
\]
where we omitted the space $\cX_v$ from the notation.  
The second map is induced from the 
projection $H^{\le n_v-k} \to H^{n_v-k}$ 
again. 
Because $e^{-\fra^\dagger}e^{\fra}(H^{\le n_v+k})=H^{\ge n_v-k}$ 
by Lemma \ref{lem:expa_expadagger}, 
we know that the map 
(\ref{eq:transversality_tinfinity}) is an isomorphism 
and that the inverse map is given by 
$u\mapsto \exp(\fra^\dagger)u$. 
Now the conclusion follows. 
\end{proof} 

\begin{proposition}
\label{prop:nilpotentorb_pure} 
Assume that (\ref{eq:kappa_induces_Inv}) holds.  
Then the nilpotent orbit 
$x\mapsto e^{-x\omega/z} \Flim$ is pure 
for sufficiently big $t=\Re(x)>0$
i.e. the map (\ref{eq:nilpotentorb_pure}) is an isomorphism 
for $t\gg 0$. 
The inverse image of 
$e^{-x\omega/z} u$, $u\in H^{n_v-k}(\cX_v)$
under (\ref{eq:nilpotentorb_pure}) is of the form 
$e^{-x\omega/z} \varpi_t(u)$ with 
\begin{equation}
\label{eq:lift_to_FcapkappaF} 
\varpi_t(u) = z^{-\mu-(k-\iota_v+ \iota_{\inv(v)})/2} 
(2t)^{(\deg + k-n_v)/2} 
(e^{\fra^\dagger} u+ O (t^{-1})) 
\in \bigoplus_{l\ge 0} H^{n_v-k-2l}(\cX_v)z^l. 
%\subset \Flim.   
\end{equation} 
When $u = \fra^j \phi$ and $\phi \in PH^{n_v-k-2j}(\cX_v)$, 
we have  
\begin{equation}
\label{eq:nilpotentorb_Hermitianmetric}
(\kappa_\cH(e^{-x\omega/z}\varpi_t(u)),
e^{-x\omega/z} \varpi_t(u))_{\cH^\cX} 
= \frac{(2t)^{k} j!}{(k+j)!}\int_{\cX_v} \omega^{k+2j}
\phi \cup \inv^*\cC(\phi)   + O(t^{k-1})   
\end{equation}
where $\cC\colon H^*(\cX_v) \to H^*(\cX_{\inv(v)})$ is 
the isomorphism appearing in (\ref{eq:kappaV_weightfiltr}) 
and $(\cdot,\cdot)_{\cH^\cX}$ is given in (\ref{eq:pairing_H}).  
If moreover $u\in H^{p,q}(\cX_v)\setminus \{0\}$ 
and the condition (\ref{eq:leadingterm_kappaV}) holds, 
\[
(-1)^{(p-q)/2}(\kappa_\cH(e^{-x\omega/z}\varpi_t(u)),
e^{-x\omega/z} \varpi_t(u))_{\cH^\cX} >0 
\]
for $t=\Re(x)\gg 0$.
(Here $p+q = n_v-k$.)  
\end{proposition}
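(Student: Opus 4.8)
The plan is to read off the first two assertions directly from Lemma~\ref{lem:nilpotentorb_pure} and then to compute the leading $t$-behaviour of the pairing in (\ref{eq:nilpotentorb_Hermitianmetric}) by hand. By the reductions carried out in Step~1, under (\ref{eq:kappa_induces_Inv}) the map (\ref{eq:nilpotentorb_pure}) is an isomorphism exactly when all the maps (\ref{eq:kappaV_transversality}) are, and the latter holds for $t=\Re(x)\gg 0$ by Lemma~\ref{lem:nilpotentorb_pure}; this is the purity claim. To produce $\varpi_t(u)$ I would take the element of $H^{\le n_v-k}(\cX_v)\cap e^{2\omega t}\kappa_\cV(H^{\le n_v+k}(\cX_{\inv(v)}))$ furnished by Lemma~\ref{lem:nilpotentorb_pure} and apply to it the operator $z^{-\mu-(k-\iota_v+\iota_{\inv(v)})/2}$ relating (\ref{eq:nilpotentorb_pure}) to (\ref{eq:kappaV_transversality}); using the age/codimension identity $\iota_v+\iota_{\inv(v)}=n-n_v$ one checks that this operator turns the degree-$(n_v-k-2l)$ summand into the $z^l$-summand, so the result lies in $\bigoplus_{l\ge 0}H^{n_v-k-2l}(\cX_v)z^l$ and coincides with the expression in (\ref{eq:lift_to_FcapkappaF}). (Explicitly one gets $\varpi_t(u)(z)=e^{(z/2t)\fra^\dagger}u+O(t^{-1})$, the form actually used below.)

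\emph{The metric formula.} The pairing in (\ref{eq:nilpotentorb_Hermitianmetric}) is constant in $z$, being $(\cdot,\cdot)_{\cH^\cX}$ applied to an element of $\mathbb{F}_{-x\omega}$ and one of $\kappa_\cH(\mathbb{F}_{-x\omega})$ (cf.\ Definition~\ref{def:polarized}). I would expand it using the definition (\ref{eq:pairing_H}) of $(\cdot,\cdot)_{\cH^\cX}$, the relation $\kappa_\cH e^{-x\omega/z}=e^{\ov{x}\omega/z}\kappa_\cH$ from (\ref{eq:H2_imaginary}), and the fact (\ref{eq:kappa_induces_Inv}) that $\kappa_\cH$ interchanges the $\cX_v$- and $\cX_{\inv(v)}$-components; this rewrites the pairing as an integral over $\cX_v$ of $e^{-2t\omega/z}$ (with $t=\Re(x)$) against cup products of the components of $\varpi_t(u)$ with those of $\kappa_\cH(\varpi_t(u))$, the $z$-exponents being arranged so that only the $z^0$-part of the integrand contributes. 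Then I would substitute the leading $t$-asymptotics of $\varpi_t(u)$ from (\ref{eq:lift_to_FcapkappaF}) and, via (\ref{eq:relation_kappa_H_V}) and (\ref{eq:kappaV_weightfiltr}), of $\kappa_\cV$, which brings in the graded antilinear isomorphism $\cC$; for $u=\fra^j\phi$ with $\phi$ primitive, the Lefschetz $\mathfrak{sl}_2$-identities together with the bookkeeping of the $(2t)^{\deg/2}$-factors show that several terms reach the common leading order $t^k$, and summing them and simplifying the resulting finite combinatorial sum collapses the leading term to $\frac{(2t)^k j!}{(k+j)!}\int_{\cX_v}\omega^{k+2j}\phi\cup\inv^*\cC(\phi)$, which is (\ref{eq:nilpotentorb_Hermitianmetric}).

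\emph{Positivity.} Finally I would feed (\ref{eq:leadingterm_kappaV}) into the formula just obtained: it forces $\cC(\phi)\in(-1)^{m}\R_{>0}\inv^*(\ov{\phi})$ with $2m=\deg\phi=n_v-k-2j$, hence $\inv^*\cC(\phi)\in(-1)^m\R_{>0}\ov{\phi}$, so the leading term is a positive multiple of $(-1)^m\int_{\cX_v}\omega^{k+2j}\phi\wedge\ov{\phi}$. The Hodge--Riemann bilinear relations for the primitive class $\phi$ of Hodge type $(p-j,q-j)$ on the smooth projective Deligne--Mumford stack $\cX_v$ (valid since we are working in the algebraic category; cf.\ Remark~\ref{rem:after_pure_polarized}) give $(-1)^{(p-q)/2}(-1)^{m(2m-1)}\int_{\cX_v}\omega^{k+2j}\phi\wedge\ov{\phi}>0$, and since $m+m(2m-1)=2m^2$ is even all the signs cancel, yielding $(-1)^{(p-q)/2}(\kappa_\cH(e^{-x\omega/z}\varpi_t(u)),e^{-x\omega/z}\varpi_t(u))_{\cH^\cX}>0$ for $\Re(x)\gg 0$. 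For a general $u\in H^{p,q}(\cX_v)$ one writes $u=\sum_j\fra^j\phi_j$ using the Lefschetz decomposition, observes that the cross terms in the pairing vanish at leading order by the orthogonality of that decomposition, and reduces to the primitive case.

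The main obstacle is the computation of the metric formula (\ref{eq:nilpotentorb_Hermitianmetric}): keeping track of every power of $z$ coming from $z^{-\mu}$ and from the age shifts, pinning down which terms contribute to the leading order $t^k$ (there is more than one), carrying out the combinatorial collapse to $\frac{j!}{(k+j)!}$, and reconciling the emergent sign with the Hodge--Riemann normalisation.
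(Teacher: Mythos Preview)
Your plan for purity and for the form of $\varpi_t(u)$ is exactly the paper's argument: both are immediate from Lemma~\ref{lem:nilpotentorb_pure} together with the reduction (\ref{eq:nilpotentorb_pure})$\Leftrightarrow$(\ref{eq:kappaV_transversality}) via $z^{-\mu-(k-\iota_v+\iota_{\inv(v)})/2}$, and your explicit rewriting $\varpi_t(u)=e^{(z/2t)\fra^\dagger}u+O(t^{-1})$ is correct and useful. Your treatment of positivity also agrees with the paper's, with the small addition (which the paper leaves implicit) that a general $u\in H^{p,q}$ is handled via the Lefschetz decomposition and orthogonality of the pieces; that is fine.

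Where you diverge is in the computation of (\ref{eq:nilpotentorb_Hermitianmetric}). You propose a direct expansion: substitute the series for $\varpi_t(u)$ and for $e^{-2t\omega/z}$, pick out the $z^0$-coefficient, and collapse the resulting combinatorial sum. This can be made to work, but as you yourself note it is the main obstacle, with several terms at order $t^k$ and a nontrivial identification of $\tfrac{j!}{(k+j)!}$. The paper avoids this entirely by two devices. First, it uses the conjugation identity
\[
e^{-2t\omega/z}\, z^{-\mu}\,(2t)^{\deg/2} \;=\; z^{-\mu}\,(2t)^{\deg/2}\, e^{-\fra},
\]
which absorbs the factor $e^{-2t\omega/z}$ into a single operator $e^{-\fra}$ acting on $e^{\fra^\dagger}u$; together with $\kappa_\cH=z^{-\mu}\kappa_\cV z^\mu$ and the rescaling $\kappa_t:=(2t)^{-\deg/2}\kappa_\cV(2t)^{\deg/2}\to\cC$, the pairing reduces in one step to
\[
(2t)^k\bigl((-1)^{-\mu-c}\,e^{-\fra^\dagger}\cC(u),\; e^{-\fra}e^{\fra^\dagger}u\bigr)_{\rm orb}+O(t^{k-1}).
\]
Second, rather than summing contributions by hand, the paper invokes Lemma~\ref{lem:expa_expadagger}, which gives the leading term of $e^{-\fra}e^{\fra^\dagger}u$ directly as $(-1)^{k+j}\tfrac{j!}{(k+j)!}\omega^k u$ for $u\in\fra^jPH^{n_v-k-2j}$. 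This is where the coefficient $\tfrac{j!}{(k+j)!}$ really comes from; in your approach it would emerge only after a nontrivial hypergeometric-type simplification. So your route is correct in principle but the paper's packaging via the conjugation identity and Lemma~\ref{lem:expa_expadagger} is what makes the computation clean and the sign tracking transparent.
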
 
\begin{proof} 
The purity of $e^{-x\omega/z} \Flim$ 
and the formula for $\varpi_t(u)$ 
follow from Lemma \ref{lem:nilpotentorb_pure} and 
the discussion preceding (\ref{eq:kappaV_transversality}). 
Putting $c= (-k +\iota_v-\iota_{\inv(v)})/2$, we calculate 
\begin{align*}
&(\kappa_\cH(e^{-x\omega/z}\varpi_t(u)),
e^{-x\omega/z} \varpi_t(u))_{\cH^\cX} 
 = (\kappa_\cH(\varpi_t(u)), e^{-2t\omega/z} \varpi_t(u))_{\cH^\cX} \\
& = (2t)^{k-n_v} 
(z^{-\mu-c}\kappa_\cV (2t)^{\deg/2}
(e^{\fra^\dagger} u + O(t^{-1})), 
z^{-\mu+c} 
(2t)^{\deg/2}(e^{-\fra}e^{\fra^\dagger} u +O(t^{-1})))_{\cH^\cX} \\   
& = (2t)^{k} ( (-1)^{-\mu-c} \kappa_t 
( e^{\fra^\dagger} u + O(t^{-1})), 
e^{-\fra}e^{\fra^\dagger} u + O(t^{-1}))_{\rm orb}  
\end{align*} 
where we used $e^{-2t\omega/z} z^{-\mu} (2t)^{\deg/2} 
= z^{-\mu}(2t)^{\deg/2} e^{-\fra}$ and (\ref{eq:relation_kappa_H_V}) 
in the second line (we assume $|z|=1$) and set 
$\kappa_t := (2t)^{-\deg/2} \kappa_\cV (2t)^{\deg/2}$ again 
in the third line. 
From (\ref{eq:kappat_goestoC}),    
the highest order term in $t$ becomes 
\[
(2t)^{k} ( (-1)^{-\mu-c} e^{-\fra^\dagger} \cC(u), 
e^{-\fra}e^{\fra^\dagger} u)_{\rm orb}.  
\] 
Note that $\cC$ anticommutes with $\fra,\fra^\dagger$ 
by (\ref{eq:H2_imaginary}). 
By a calculation using Lemma \ref{lem:expa_expadagger}, 
we find that this equals the highest order term 
of the right-hand side of (\ref{eq:nilpotentorb_Hermitianmetric}). 
The last statement on positivity follows from the 
classical Hodge-Riemann bilinear inequality: 
\[
(-1)^{(p-q)/2} (-1)^{(n_v-k)/2 - j} 
\int_{\cX_v} \omega^{k+2j} \phi \cup \ov{\phi} >0  
\]
for $\phi \in PH^{n_v-k-2j}(\cX_v)\cap 
H^{p-j,q-j}(\cX_v)\setminus\{0\}$, $n_v-k$ even. 
\end{proof}

{\bf (Step 2)}  
Next we show that $x\mapsto \F_{-x\omega}$ is pure for 
$t=\Re(x)\gg 0$.   
We set $\F'_{-x\omega} = e^{x\omega/z}\F_{-x\omega}$. 
Again by (\ref{eq:FcapkappaF}) 
in Proposition \ref{prop:purity_condition} 
and $\kappa_\cH e^{-x\omega/z} = 
e^{\ov{x}\omega/z} \kappa_\cH$, 
it is sufficient to show that 
\[
\F'_{-x\omega} \cap e^{2 t\omega/z} \kappa_{\cH}(\F'_{-x\omega}) 
\longrightarrow \F'_{-x\omega}/ z \F'_{-x\omega} 
\]
is an isomorphism. 
Put $\kappa^t = e^{2t\omega/z}\kappa_{\cH}$ 
($\kappa^t$ is different from $\kappa_t$ 
appearing in (\ref{eq:kappat_goestoC})).  
Fix a basis $\{\phi_1,\dots,\phi_N\}$ of 
$H^*_{\rm orb}(\cX)$. 
Define an $N\times N$ matrix $A_t(z,z^{-1})$ by 
\begin{equation}
\label{eq:matrixA}
[\kappa^t(\phi_1),\dots,\kappa^t(\phi_N)] 
= [\phi_1,\dots,\phi_N] A_t(z,z^{-1}).  
\end{equation} 
This matrix $A_t$ is a Laurent polynomial in $z$ 
(by (\ref{eq:relation_kappa_H_V})) and 
a polynomial in $t$. 
We already showed that (\ref{eq:nilpotentorb_pure}) 
is an isomorphism for $t=\Re(x) \gg 0$. Therefore, 
$A_t(z)$ admits the Birkhoff factorization 
$A_t(z) = B_t(z) C_t(z)$ for $t\gg 0$, 
where $B_t\colon \D_0 \to GL_N(\C)$ with $B_t(0)=\unit$ and 
$C_t\colon \D_\infty \to GL_N(\C)$ 
(see Remark \ref{rem:Birkhoff_Iwasawa}).  
The matrix $B_t(z)$ here is given by 
\[
[\varpi_t(\phi_1),\dots, \varpi_t(\phi_N)] 
= [\phi_1,\dots,\phi_N] B_t(z)
\] 
for $\varpi_t(\phi_i)$ appearing in (\ref{eq:lift_to_FcapkappaF}).  
In particular, $B_t(z)$ and $C_t(z)$ are polynomials in $z$ 
and $z^{-1}$ respectively and have 
at most polynomial growth in $t$. 
We define 
$Q_x \colon \Proj^1\setminus \{0\} \to GL_N(\C)$ by 
\begin{equation}
\label{eq:matrixQ}
[j_1,\dots,j_N] = [\phi_1,\dots,\phi_N] Q_x(z), \quad 
j_i :=  e^{x\omega/z} \J_{-x\omega}(\phi_i)
\end{equation} 
where $\J_{\tau}$ is given in (\ref{eq:fundamentalsol_A}). 
The vectors $j_1,\dots, j_N$ form a basis of $\F'_{-t\omega}$
and $Q_x(\infty) = \unit$. 
Note that $Q_x = \unit + O(e^{-\epsilon_0 t})$ as 
$t=\Re(x) \to \infty$  
for $\epsilon_0 := \min(\pair{\omega}{d}\;;\; 
d\in \Eff_\cX\setminus\{0\})$. 
From (\ref{eq:matrixA}) and (\ref{eq:matrixQ}), we find 
\[
[\kappa^t(j_1),\dots,\kappa^t(j_N)] 
= [j_1,\dots,j_N] Q_x^{-1} A_t \ov{Q}_x,  
\]
where $\ov{Q}_x$ is the complex conjugate 
of $Q_x$ with $z$ restricted to $S^1 = \{|z|=1\}$. 
As we did in Remark \ref{rem:Birkhoff_Iwasawa}, 
it suffices to show that 
$Q^{-1}_x A_t \ov{Q}_x$ admits the Birkhoff factorization. 
We have 
\begin{align*}
Q_x^{-1} A_t \ov{Q}_x = B_t (B_t^{-1} Q_x^{-1} B_t) 
(C_t\ov{Q}_x C_t^{-1}) 
C_t   
\end{align*} 
and for $0<\epsilon<\epsilon_0$, 
\[
B_t^{-1} Q^{-1}_x B_t = \unit +  O(e^{-\epsilon t}), \quad 
C_t \ov{Q}_x C_t^{-1} = \unit+ O(e^{-\epsilon t}), \quad 
\text{ as } t=\Re(x) \to \infty.    
\]
Here we used that $B_t$ and $C_t$ 
have at most polynomial growth in $t$. 
By the continuity of Birkhoff factorization, 
$(B_t^{-1} Q^{-1}_x B_t) (C_t\ov{Q}_x C_t^{-1}) = 
\unit +O(e^{-\epsilon t})$ 
admits the Birkhoff factorization of the form: 
\begin{equation}
\label{eq:Birkhoff_withestimate}
(B_t^{-1} Q^{-1}_x B_t) (C_t\ov{Q}_x C_t^{-1}) = 
\tilde{B}_x(z) \tilde{C}_x(z), 
\quad \tilde{B}_x = \unit + O(e^{-\epsilon t}), \quad 
\tilde{C}_x = \unit+ O(e^{-\epsilon t}) 
\end{equation} 
for $t=\Re(x)\gg 0$, where 
$\tilde{B}_x\colon \D_0\to GL_N(\C)$, $\tilde{B}_x(0)=\unit$ 
and $\tilde{C}_x\colon \D_\infty \to GL_N(\C)$. 
The order estimate $O(e^{-\epsilon t})$ holds 
in the $C^0$-norm on the loop space $C^\infty(S^1,\End(\C^N))$. 
See Appendix \ref{subsec:orderestimate_Birkhoff} for 
the proof of the order estimate in (\ref{eq:Birkhoff_withestimate}).   
Therefore $Q_x^{-1} A_t \ov{Q}_x$ also 
has the Birkhoff factorization 
for $t=\Re(x)\gg 0$ and we know that 
\[
 [\Pi_x(\phi_1), \dots,  \Pi_x(\phi_N) ] 
:= [\phi_1,\dots,\phi_N] Q_x(z) B_t(z) \tilde{B}_x(z) 
\] 
form a basis of $\F'_{-x\omega} \cap \kappa^t(\F'_{-x\omega})$, 
\emph{i.e.} 
$e^{-x\omega/z} \Pi_x(\phi_1),\dots, e^{-x\omega/z}\Pi_x(\phi_N)$ 
form a basis of $\F_{-x\omega}\cap \kappa_\cH(\F_{-x\omega})$. 
Using that $\Pi_x(\phi_i) = \varpi_x(\phi_i) + 
O(e^{-\epsilon t})$ 
and Proposition \ref{prop:nilpotentorb_pure}, we have  
\[
(-1)^{(p-q)/2}
(\kappa_{\cH}(e^{-x\omega/z} \Pi_x(\phi)), 
e^{-x\omega/z} \Pi_x(\phi) )_{\cH^\cX} >0, \quad 
\phi\in H^{p,q}(\cX_v)\setminus \{0\} 
\]
for sufficiently big $\Re(x)>0$. 
This completes the proof of Theorem \ref{thm:pure_polarized}.

\subsection{The $\hGamma$-integral (real) structure} 
\label{subsec:Amodel_intstr}
Real or integral structures 
in the quantum cohomology \seminf VHS 
(in the sense of Definition \ref{def:realintstr}) are not unique. 
In this section, we construct an integral structure 
($\hGamma$-integral structure) 
which makes sense for a general symplectic orbifold,  
using $K$-theory. 
Since this satisfies the assumption of 
Theorem \ref{thm:pure_polarized}, 
it yields a Cecotti-Vafa structure 
near the large radius limit point. 
We showed in \cite{iritani-Int} that 
the $\hGamma$-integral structure for a weak Fano toric orbifold 
coincides with the integral structure on the singularity 
mirror (Landau-Ginzburg model) 
\cite{givental-ICM, givental-mirrorthm-toric}. 

Let $K(\cX)$ denote the Grothendieck group 
of topological orbifold vector bundles on $\cX$. 
See \emph{e.g.} \cite{adem-ruan, kawasaki-Vind, moerdijk} 
for vector bundles on orbifolds. 
For simplicity, we assume that $\cX$ is isomorphic 
to the quotient $[Y/G]$ as a topological orbifold 
where $Y$ is a manifold and $G$ is a compact Lie group. 
In this case $K(\cX)$ is a finitely generated 
abelian group \cite{adem-ruan}. 
For example, an orbifold without generic stabilizers 
can be presented as a quotient orbifold $Y/G$ 
(see \emph{e.g.} \cite{adem-ruan}). 
For an orbifold vector bundle $\widetilde{V}$ 
on the inertia stack $I\cX$, 
we have an eigenbundle decomposition of $\widetilde{V}|_{\cX_v}$
\[
\widetilde{V}|_{\cX_v} = \bigoplus_{0\le f<1} 
\widetilde{V}_{v,f} 
\] 
with respect to the stabilizer action over $\cX_v$.    
Here, the stabilizer acts on the component 
$\widetilde{V}_{v,f}$ by $\exp(2\pi\iu f) \in \C$. 
Let $\pr \colon I\cX \to \cX$ be the projection. 
For an orbifold vector bundle $V$, the Chern character 
$\tch(V) \in H^*(I\cX)$ is defined by 
\[
\tch(V) := \bigoplus_{v\in \sfT} \sum_{0\le f<1} e^{2\pi\iu f}
\ch((\pr^*V)_{v,f}) 
\]
where $\ch$ is the ordinary Chern character and 
$V$ is an orbifold vector bundle on $\cX$. 
For an orbifold vector bundle $V$ on $\cX$, 
let $\delta_{v,f,i}$, $i=1,\dots,l_{v,f}$ be the Chern roots of 
$(\pr^*V)_{v,f}$. 
The Todd class $\tTd(V)\in H^*(I\cX)$ is defined by  
\[
\tTd(V) = \bigoplus_{v\in \sfT} 
\prod_{0<f<1,1\le i\le l_{v,f}}\frac{1}{1-e^{-2\pi\iu f}e^{-\delta_{v,f,i}}}
\prod_{f=0,1\le i\le l_{v,0}} \frac{\delta_{v,0,i}}{1-e^{-\delta_{v,0,i}}}
\]
We put $\tTd_\cX := \tTd(T\cX)$. 
For a holomorphic orbifold vector bundle $V$, 
the holomorphic Euler characteristic 
$\chi(V):=\sum_{i=0}^{\dim \cX} (-1)^i \dim H^i(\cX,V) $ 
is given by the Kawasaki-Riemann-Roch formula \cite{kawasaki-rr,toen}:  
\begin{equation}
\label{eq:orbifoldRR}
\chi(V) = \int_{I\cX} \tch(V)\cup \tTd_\cX.    
\end{equation} 
For a not necessarily holomorphic orbifold vector bundle, 
we can use the right-hand side of (\ref{eq:orbifoldRR}) 
as the definition of $\chi(V)$. It follows from 
Kawasaki's $V$-index theorem \cite{kawasaki-Vind} 
that $\chi(V)$ is always an integer 
(see \cite[Remark 2.8]{iritani-Int}). 
We define a multiplicative characteristic class 
$\hGamma \colon K(\cX) \to H^*(I\cX)$ 
called the \emph{$\hGamma$-class} \cite{iritani-Int,KKP} by  
\[
\hGamma(V) := \bigoplus_{v\in \sfT} 
\prod_{0\le f<1} \prod_{i=1}^{l_{v,f}} 
\Gamma(1- f + \delta_{v,f,i}),  
\]
where $\delta_{v,f,i}$ is the same as above.  
The Gamma function on the right-hand side 
should be expanded in series at $1-f>0$. 
This class can be regarded as a ``half" of the 
Todd class. When $\cX$ is a manifold $X$, 
by using $\Gamma(1-z) \Gamma(1+z) 
=  e^{-\pi \iu z} \pi z/(1- e^{-2\pi\iu z})$, 
we have 
\[
e^{\pi\iu c_1(X)}  \cup 
\hGamma(V) \cup (-1)^{\deg/2} \hGamma(V)  
= (2\pi\iu)^{\deg/2} \Td(V). 
\]

\begin{definition-proposition}[{\cite[Proposition 2.10]{iritani-Int}}]
\label{def-prop:A-model_int} 
Put $\hGamma_\cX := \hGamma(T\cX)$. 
Define an integral structure 
$\cV^\cX_\Z\subset \cV^\cX = H_{\rm orb}^*(\cX)$ 
to be the image of the map  
\begin{equation}
\label{eq:stdintstr_A}
\Psi \colon K(\cX) \longrightarrow \cV^\cX, \quad 
[V] \longmapsto \frac{1}{(2\pi)^{n/2}} 
\hGamma_\cX \cup (2\pi\iu)^{\deg/2} \inv^* (\tch(V)),   
\end{equation} 
where $\deg\colon H^*(I\cX)\to H^*(I\cX)$ is a grading operator 
on $H^*(I\cX)$ defined by $\deg = 2k$ on $H^{2k}(I\cX)$ 
and $\cup$ is the cup product in $H^*(I\cX)$. 
Then 
\begin{itemize}
\item[(i)] 
$\cV^\cX_\Z$ is a lattice in $\cV^\cX$ 
such that $\cV^\cX \cong \cV^\cX_\Z \otimes_\Z \C$. 
\item[(ii)]
The Galois action $G^\cV(\xi)$ on $\cV^\cX$ in (\ref{eq:Galois_V}) 
corresponds to tensoring by the line bundle $\otimes L_\xi^\vee$ 
in $K(\cX)$, i.e. 
$\Psi([V\otimes L_\xi^\vee]) = G^\cV(\xi)(\Psi([V]))$.  
\item[(iii)]  
The pairing $(\cdot,\cdot)_{\cV^\cX}$ on $\cV^\cX$ 
in (\ref{eq:pairing_Amodel_V}) 
corresponds to the Mukai pairing on $K(\cX)$ defined by 
$([V_1],[V_2])_{K(\cX)} := \chi(V_2^\vee \otimes V_1)$, i.e. 
$(\Psi([V_1]),\Psi([V_2]))_{\cV^\cX} = ([V_1],[V_2])_{K(\cX)}$.  
In particular, 
the pairing $(\cdot,\cdot)_{\cV^\cX}$ restricted 
on $\cV_\Z^\cX$ takes values in $\Z$. 
\end{itemize} 
Therefore $\cV^\cX_\Z$ and $\cV^\cX_\R := \cV^\cX_\Z\otimes_\Z \R$ 
satisfy the conditions in Proposition \ref{prop:char_A_real_int_str} 
except for the unimodularity of the pairing on $\cV^\cX_\Z$.  
We call $\cV^\cX_\Z$ and $\cV^\cX_\R$ 
the \emph{$\hGamma$-integral structure} and 
the \emph{$\hGamma$-real structure} respectively. 
The real involution $\kappa_\cV$ on $\cV^\cX$ 
for the $\hGamma$-real structure is given by 
\[
\kappa_{\cV} (\alpha) = (-1)^k \prod_{0\le f <1} 
\prod_{i=1}^{l_{\inv(v),f}} \frac{\Gamma(1-f+\delta_{\inv(v),f,i})}
{\Gamma(1-\ov{f}-\delta_{\inv(v),f,i})} \inv^* \ov{\alpha}, \quad 
\alpha \in H^{2k}(\cX_v) \subset \cV^\cX, 
\]
where $\delta_{\inv(v),f,i}$, $i=1,\dots,l_{\inv(v),f}$ are 
the Chern roots of $(\pr^*T\cX)_{\inv(v),f}$ and 
\[
\ov{f} := 
\begin{cases} 
1-f  & \text{if  $0<f<1$} \\
0    & \text{if  $f=0$}. 
\end{cases}
\]
Therefore, this $\kappa_\cV$ satisfies 
(\ref{eq:kappa_induces_Inv}) and (\ref{eq:leadingterm_kappaV}).  
In particular, the conclusions of Theorem \ref{thm:pure_polarized}
hold for the $\hGamma$-real structure 
on the quantum cohomology \seminf VHS. 
\end{definition-proposition} 

The unimodularity of the pairing on $\cV_\Z^\cX$ 
(or on the integral local system $R_\Z$) holds if 
the map 
\[
K(\cX) \to \Hom(K(\cX),\Z) \quad \alpha \mapsto \chi(\alpha \otimes \cdot) 
\] 
is surjective. 
This holds true when $\cX$ is a manifold $X$. 
The author does not know if this holds in general. 

\begin{remark}
Let $\cX=X$ be a manifold. 
A possible origin of the $\hGamma$-class 
might be the Floer theory on the free loop space 
$LX= C^\infty(S^1,X)$. Let $S^1$ act on $LX$ 
by loop rotation. 
Givental's heuristic interpretation \cite{givental-ICM} 
of the quantum $D$-module as the $S^1$-equivariant 
Floer theory suggests that the set $X$ of constant 
loops in $LX$ contributes to the Floer theory by 
the (infinite) localization factor: 
\[
\frac{1}{\Euler_{S^1}(N_+)} = 
\frac{1}{\prod_{m>0} \Euler_{S^1}(TX \otimes \varrho^m)} 
\]
where $N_+ \cong \bigoplus_{m>0} TX \otimes \varrho^m$ 
is the \emph{positive normal bundle} of $X$ in $LX$ 
and $\varrho$ is the one-dimensional 
$S^1$-module of weight 1. 
By the $\zeta$-function regularization, this 
factor gives exactly $z^{-\mu} z^{\rho} (2\pi)^{-n/2} \hGamma_X$, 
where $z = c_1^{S^1}(\varrho)$ is a generator of 
$H^2(BS^1) = H_{S^1}({\rm pt})$. 
\end{remark} 

\section{Example: $tt^*$-geometry of $\Proj^1$} 
\label{sec:exampleP1tt*} 
We calculate the Cecotti-Vafa structure on quantum 
cohomology of $\Proj^1$ with respect to 
the $\hGamma$-real structure 
in Definition-Proposition \ref{def-prop:A-model_int}. 
By \cite[Theorem 4.11]{iritani-Int}, 
the $\hGamma$-real structure here matches with 
a natural real structure on the mirror, 
so the $tt^*$-geometry of $\Proj^1$ 
is the same as that of the Landau-Ginzburg model 
(mirror of $\Proj^1$): 
\[
W_q\colon \C^* \to \C, \quad 
W_q = x + \frac{q}{x}, \quad q\in \C^*.  
\]

Let $\omega\in H^2(\Proj^1)$ be the unique 
integral K\"{a}hler class.  
Let $\{t^0,t^1\}$ be the linear co-ordinate system 
on $H^*(\Proj^1)$ 
dual to the basis $\{\unit, \omega\}$. 
Put $\tau = t^0 \unit + t^1 \omega$. 
The quantum product $\circ_\tau$ is given by 
\[
(\unit \circ_\tau) = 
\begin{bmatrix}
1 & 0 \\
0 & 1 
\end{bmatrix} 
, \quad 
(\omega \circ_\tau) =  
\begin{bmatrix}
0 & e^{t^1} \\ 
1 & 0 
\end{bmatrix}, 
\]
where we identify $\unit,\omega$ with  
column vectors $[1,0]^{\rm T}$, $[0,1]^{\rm T}$ and
the matrices act on vectors by the 
left multiplication. 
The exponential $e^{t^1}$ corresponds to 
$q$ in the Landau-Ginzburg model via the mirror map, 
so we set $q = e^{t^1}$.  
Hereafter, we restrict $\tau$ to lie on $H^2(\Proj^1)$ 
but we will not lose any information by this 
(see Remark \ref{rem:P1tt*} below). 
Recall that the Hodge structure $\F_\tau$ 
associated with the quantum cohomology 
of $\Proj^1$ is given by the image of 
$\J_\tau \colon H^*(\Proj^1) \otimes \cO(\C) 
\to \cH^{\Proj^1} = H^*(\Proj^1) 
\otimes \cO(\C^*)$ in (\ref{eq:fundamentalsol_A}). 
The $J$-function $J(q,z) = \J_\tau\unit$ is given by 
\cite{givental-mirrorthm-toric}:  
\[
J(q,z) :=  e^{t^1 \omega/z} 
\sum_{k=0}^\infty 
\frac{q^k \unit}{(\omega + z)^2 \cdots (\omega+kz)^2}
= e^{t^1\omega/z} (J_0(q,z) \unit + J_1(q,z) \frac{\omega}{z}), 
\]
and the map $\J_\tau$ is given by 
\begin{align*}
&\J_\tau = 
\begin{bmatrix}
\vert & \vert \\ 
J(e^{t^1},z) & z \partial_1 J(e^{t^1},z) \\ 
\vert & \vert  
\end{bmatrix} 
= e^{t^1 \omega/z} \circ Q, \quad 
Q:= 
\begin{bmatrix}
J_0 & z\partial_1 J_0  \\
J_1/z &  J_0 + \partial_1 J_1 
\end{bmatrix} 
\end{align*}  
where $\partial_1 = (\partial/\partial t^1)$. 
By Definition-Proposition \ref{def-prop:A-model_int}, 
an integral basis of $\cV^{\Proj^1}=H^*(\Proj^1)$ is given by 
\[
\Psi(\cO_{\Proj^1})  = 
\frac{1}{\sqrt{2\pi}}(\unit -2\gamma \omega),  \quad 
\Psi(\cO_{\pt})  = \sqrt{2\pi} \iu \omega,   
\] 
where $\gamma$ is the Euler constant. 
Hence the real involutions on $\cV^{\Proj^1}$ and 
$\cH^{\Proj^1}$ are given respectively by 
(see (\ref{eq:relation_kappa_H_V})):  
\[
\kappa_\cV =
\begin{bmatrix}
1 & 0 \\ 
-4\gamma & -1 
\end{bmatrix}\circ \ov{\phantom{A}}, \quad 
\kappa_\cH = 
\begin{bmatrix}
z & 0 \\ 
-4\gamma & -z^{-1} 
\end{bmatrix}\circ \ov{\phantom{A}}.    
\]
where $\ov{\phantom{A}}$ is the usual complex conjugation 
(when $z$ lies in $S^1 =\{|z|=1\}$). 

To obtain the Cecotti-Vafa structure, we need to find 
a basis of $\F_\tau \cap \kappa_\cH(\F_\tau)$. 
The procedure below follows the proof of 
Theorem \ref{thm:pure_polarized} 
in Section \ref{subsec:pure_polarized}. 
Put $\F_\tau ' := e^{-t^1\omega/z}\F_\tau$ 
and $\kappa^\tau_\cH := e^{-(t^1+\ov{t^1})\omega/z}\kappa_\cH$. 
By 
\[ 
\F_\tau \cap \kappa_\cH(\F_\tau) 
= e^{t^1\omega/z} (\F_\tau' \cap \kappa_\cH^\tau (\F'_\tau)), 
\] 
it suffices to calculate a basis of 
$\F_\tau' \cap \kappa_{\cH}^\tau (\F'_\tau)$. 
First we approximate $\F_\tau'$ by 
$\Flim := H^*(\Proj^1)\otimes \cO(\C)$ 
and solve for a basis of $\Flim \cap \kappa^\tau_{\cH}(\Flim)$. 
By elementary linear algebra, we find 
the following Birkhoff factorization of 
$[\kappa^\tau_\cH(\unit), \kappa^\tau_\cH(\omega)]$: 
\[
[\kappa^\tau_\cH(\unit),\kappa^\tau_\cH(\omega)] = BC, \quad 
B:= 
\begin{bmatrix}
1 & z/a_\tau \\ 
0 & 1 
\end{bmatrix}, \quad 
C:= 
\begin{bmatrix}
0 & 1/a_\tau \\
a_\tau & -1/z 
\end{bmatrix}, 
\]
where $a_\tau := -t^1-\ov{t^1} - 4\gamma$. 
Then the column vectors of $B$ give a basis of 
$\Flim \cap \kappa^\tau_{\cH}(\Flim)$ (\emph{cf.} (\ref{eq:A=BC})). 
Note that the column vectors of $Q$ above 
form a basis of $\F'_\tau$. 
Thus the Birkhoff factorization of $Q^{-1}\kappa_\cH^\tau(Q)$ 
calculates a basis of $\F'_\tau \cap \kappa_\cH^\tau(\F'_\tau)$. 
Define a matrix $S$ by 
% By the property of the fundamental solution 
% in Proposition \ref{prop:fundamentalsol_A}, 
% we know that $Q^{-1}$ equals the adjoint of $Q$ 
% (with respect to the Poincar\'{e} pairing) with 
% the sign of $z$ flipped. 
\[
\kappa_\cH^\tau (Q) = Q B S C. 
\]
Using the fact that $Q^{-1}$ is the adjoint of $Q(-z)$ 
(by Proposition \ref{prop:fundamentalsol_A}), 
we have 
\[
S = 
\begin{bmatrix}
\substack{
2\Re(J_0\ov{J_1})a_\tau^{-1} 
+ |J_0|^2 + 2\Re(\partial_1J_0\ov{J_1}
+J_0\ov{\partial_1J_1}) \\
+ 2  \Re(\partial_1J_0\ov{\partial_1 J_1})a_\tau 
-  |\partial_1J_0|^2 a_\tau^2 \\ 
\phantom{A}
}
&  
\substack{
( 2\Re(J_0\ov{J_1}) a_\tau^{-2} \\    
+ (\partial_1 J_0\ov{J_1}   
+ \ov{J_0}\partial_1 J_1) a_\tau^{-1}   
- \partial_1 J_0\ov{J_0}) z    \\ 
}
\\ 
\substack{
( -2\Re(J_0\ov{J_1}) 
- (\ov{\partial_1 J_0}J_1 + J_0
   \ov{\partial_1 J_1}) a_\tau \\
 + J_0\ov{\partial_1 J_0} a_\tau^2) z^{-1} 
}     
& 
\substack{
-2\Re(J_1\ov{J_0}) a_\tau^{-1} + |J_0|^2
} 
\end{bmatrix}, 
\]
where we restrict $z$ to lie on $S^1=\{|z|=1\}$. 
Because $S = \unit + O(|q|^{1-\epsilon})$, $0<\epsilon<1$ as 
$|q|\to 0$, 
this admits the Birkhoff factorization 
$S= \tilde{B}\tilde{C}$ for $|q|\ll 1$, where 
$\tilde{B}\colon\D_0 \to GL_2(\C)$, 
$\tilde{C}\colon \D_\infty \to GL_2(\C)$ 
such that $\tilde{B}(0) =\unit$. 
Then the column vectors of $QB\tilde{B}=
\kappa_{\cH}^\tau(Q) C^{-1}\tilde{C}^{-1}$ give a 
basis of $\F_\tau'\cap \kappa_\cH^\tau(\F_\tau')$. 
We can perform the Birkhoff factorization in the following way. 
Note that $S$ is expanded in a power series in $q$ and $\ov{q}$ 
with coefficients in Laurent polynomials in $a_\tau$ and 
$z$: 
\[
S = \sum_{n,m\ge 0} 
S_{n,m} q^n \ov{q}^m, \quad 
S_{n,m} \in 
\End(\C^2)[z,z^{-1},a_\tau,a_\tau^{-1}]. 
\]
We put $\tilde{B} = \sum_{n,m\ge 0}
\tilde{B}_{n,m}q^n\ov{q}^m$, 
$\tilde{C} = \sum_{n,m\ge 0} \tilde{C}_{n,m} q^n \ov{q}^m$. 
Since $S_{0,0}=\tilde{B}_{0,0}=\tilde{C}_{0,0}=\id$, 
we can recursively solve for 
$\tilde{B}_{n,m}$ and $\tilde{C}_{n,m}$ 
by decomposing 
\[
\tilde{B}_{n,m} + \tilde{C}_{n,m} = S_{n,m} - 
\sum_{(i,j)\neq 0, (n-i,m-j)\neq 0} 
\tilde{B}_{i,j} \tilde{C}_{n-i,m-j} 
\] 
into strictly positive power series 
$\tilde{B}_{n,m}$ and 
non-positive power series $\tilde{C}_{n,m}$ in $z$.   
The first six terms of $B\tilde{B}$ are given by 
\begin{align*}
B\tilde{B} &= 
\begin{bmatrix}
1 & \frac{z}{a_\tau} \\ 
0 & 1 
\end{bmatrix} 
+ \ov{q}
\begin{bmatrix} 
(1+a_\tau)z^2 & \frac{z^3}{a_\tau} \\ 
(2+2a_\tau + a_\tau^2) z & \frac{(2+a_\tau)z^2}{a_\tau}  
\end{bmatrix} 
+ q\ov{q}
\begin{bmatrix}
0 & -\frac{(8+8a_\tau+2 a_\tau^2)z}{a_\tau^2} \\
0 & 0 
\end{bmatrix} \\
+ & \ov{q}^2 
\begin{bmatrix}
\frac{(1+2a_\tau)z^4}{4} & \frac{z^5}{4a_\tau} \\
\frac{(3+6a_\tau+2a_\tau^2)z^3}{4} & 
\frac{(3+a_\tau)z^4}{4a_\tau}  
\end{bmatrix} 
+ q \ov{q}^2 
\begin{bmatrix}
\frac{(33+34a_\tau+18a_\tau^2+4a_\tau^3)z^2}{4} & 
-\frac{(32+31a_\tau+12a_\tau^2 +2a_\tau^3)z^3}{4a_\tau^2} \\
\frac{(25+50 a_\tau +34a_\tau^2 +12 a_\tau^3 +2 a_\tau^4)z}{2} &
-\frac{(64+78a_\tau+ 45 a_\tau^2+14 a_\tau^3 +2 a_\tau^4)z^2}
{4a_\tau^2} 
\end{bmatrix}
\\ 
+ &\ov{q}^3 
\begin{bmatrix}
\frac{(1+3a_\tau)z^6}{36} & \frac{z^7}{36a_\tau} \\
\frac{(11+33a_\tau+9a_\tau^2)z^5}{108} & 
\frac{(11+3a_\tau)z^6}{108a_\tau} 
\end{bmatrix}  
+ O((\log|q|)^5 |q|^4) 
\end{align*} 
Let $\Phi_\tau$ denote the inverse 
to the natural projection 
$\F_\tau \cap \kappa_{\cH}(\F_\tau) \to \F_\tau/z\F_\tau 
= H^*(\Proj^1)$. 
Because $B\tilde{B}=\unit+O(z)$, 
we have 
$[\Phi_\tau(\unit),\Phi_\tau(\omega)] 
= e^{t^1\omega/z} QB\tilde{B}$: 
\[
\begin{CD}
\Phi_\tau \colon H^*(\Proj^1) = 
\F_\tau'/z\F_\tau' @>{QB\tilde{B}}>>
\F'_\tau \cap \kappa_{\cH}^\tau(\F_\tau') 
@>{e^{t^1\omega/z}}>> 
\F_\tau \cap \kappa_{\cH}(\F_\tau).    
\end{CD} 
\] 
The Cecotti-Vafa structure for $\Proj^1$ 
is defined on the trivial vector bundle 
$K := H^*(\Proj^1)\times H^*(\Proj^1) \to H^*(\Proj^1)$. 
Recall that the Hermitian metric $h$ 
on $K_\tau$ is the pull-back of the Hermitian metric 
$(\alpha, \beta) \mapsto 
(\kappa_{\cH}(\alpha),\beta)_{\cH}$ 
on $\F_\tau \cap \kappa_\cH(\F_\tau)$ 
through $\Phi_\tau \colon K_\tau 
\cong \F_\tau \cap \kappa_\cH(\F_\tau)$. 
The Hermitian metric $h$ is of the form: 
\[
h = 
\begin{bmatrix}
h_{\ov{0}0} & 0 \\
0 & h_{\ov{0}0}^{-1}  
\end{bmatrix}, \quad 
h_{\ov{0}0}:= \int_{\Proj^1} 
\kappa_{\cH}(\Phi_\tau(\unit))\Big|_{z\mapsto -z} 
\cup \Phi_\tau(\unit). 
\] 
The first seven terms of the expansion of $h_{\ov{0}0}$ 
are (with $a_\tau = -t^1-\ov{t^1}-4\gamma$, $q=e^{t^1}$) 
\scriptsize 
\begin{align*} 
h_{\ov{0}0} = & a_\tau+ 
|q|^{2} 
\left( {a_\tau^3}+4{a_\tau^2}+8 a_\tau+8 \right) + 
|q| ^{4} 
\left( {a_\tau^5}+ 8{a_\tau^4}+{\frac{121}{4}}{a_\tau^3}+
{\frac {129}{2}}{a_\tau^2}+{\frac 
{145}{2}} a_\tau+{\frac {145}{4}} \right) \\ 
&+ |q|^{6} 
\left( a_\tau^{7}+12 a_\tau^{6}+{\frac {275}{4}}a_\tau^{5}+{
\frac {477}{2}}a_\tau^{4}+{\frac {9539}{18}}a_\tau^{3}  
+ {\frac {81001}{108}} 
a_\tau^{2}+{\frac {50342}{81}}a_\tau 
+{\frac {55526}{243}} \right) \\ 
& +|q|^{8} 
\left( a_\tau^{9}+16a_\tau^{8}+{
\frac {493}{4}}a_\tau^{7}+{\frac {1185}{2}}a_\tau^{6}+{\frac {31001}{16}
}a_\tau^{5}+{\frac {79939}{18}}a_\tau^{4}+{\frac {49077907}{6912}}a_\tau^{3}
\right. \\ 
& \qquad \quad \left. 
+{\frac {52563371}{6912}}a_\tau^{2}+{\frac {614694323}{124416}}a_\tau+{
\frac {736622003}{497664}} \right)  \\
& + |q|^{10}
\left( a_\tau^{11}+20a_\tau^{10}+{\frac {775}{4}}a_\tau^{9}+
{\frac {2381}{2}}a_\tau^{8}+{\frac {368599}{72}}a_\tau^{7}+{\frac {
1738481}{108}}a_\tau^{6}+{\frac {780126811}{20736}}a_\tau^{5} 
\right. \\ 
& \qquad \quad +\left. {\frac {
4053627445}{62208}}a_\tau^{4}+{\frac {254355946241}{3110400}}a_\tau^{3}+
{\frac {1465574917127}{20736000}}a_\tau^{2}+{\frac {163291639271}{
4320000}}a_\tau +{\frac {1840366543439}{194400000}} \right) \\
&+|q| ^{12} 
\left( a_\tau^{13}+24a_\tau^{12}+{\frac {
1121}{4}}a_\tau^{11}+{\frac {4193}{2}}a_\tau^{10}+{\frac {1606399}{144}}
a_\tau^{9}+{\frac {2398517}{54}}a_\tau^{8}+{\frac {2814667745}{20736}}
a_\tau^{7}+{\frac {20004983519}{62208}}a_\tau^{6} \right. \\ 
& \qquad \quad +{\frac {407437321759}{
691200}}a_\tau^{5}+{\frac {51278023471273}{62208000}}a_\tau^{4}+{\frac {
796478452045403}{933120000}}a_\tau^{3}+{\frac {11553263487112967}{
18662400000}}a_\tau^{2} \\ 
& \qquad \quad \left. +{\frac {11823418405646927}{41990400000}}a_\tau
+{\frac{15268380040196927}{251942400000}} \right)
+ \cdots. 
% &+ |q| ^{14} 
% \left( a_\tau^{15}+28a_\tau^{14}+{\frac {1531}{4}}
% a_\tau^{13}+{\frac {6749}{2}}a_\tau^{12}+{\frac {256583}{12}}a_\tau^{11}+
% {\frac {3702091}{36}}a_\tau^{10}+{\frac {2674126829}{6912}}a_\tau^{9}
% \right. \\ 
% & \qquad \quad + 
% {\frac {23992374451}{20736}}a_\tau^{8}+{\frac {3444794744051}{1244160}}
% a_\tau^{7}+{\frac {65865175722667}{12441600}}a_\tau^{6}+{\frac {
% 166136575783523}{20736000}}a_\tau^{5}\\ 
% & \qquad \quad +{\frac {35143459681796447}{
% 3732480000}}a_\tau^{4}+{\frac {712389555704948633}{85730400000}}a_\tau^{
% 3}+{\frac {49883320462813898621}{9601804800000}}a_\tau^{2}\\ 
% & \qquad \quad \left. +{\frac {
% 1243728993123880774501}{604913702400000}}a_\tau
% +{\frac {246474100375331283037}{635159387520000}} \right) 
% + \cdots 
\end{align*} 
\normalsize 
The other data 
$(\kappa,g, C,\tilde{C},D,\cU,\ov{\cU},\cQ)$ 
of the Cecotti-Vafa structure 
are given in terms of $h_{\ov{0}0}$. 
In fact, we have $C_0= \tC_{\ov{0}} =\id$, 
$D_0=\partial/\partial t^0$, 
$D_{\ov{0}} = \partial/\partial \ov{t^0}$ and 
\begin{gather*}
g = 
\begin{bmatrix}
0 & 1 \\
1 & 0
\end{bmatrix},  
\quad 
\kappa = 
\begin{bmatrix}
0 & h_{\ov{0}0}^{-1}  \\
h_{\ov{0}0} & 0
\end{bmatrix} \circ \ov{\phantom{A}},  
\quad 
D_1 = \partial_1 + 
\begin{bmatrix}
\partial_1 \log h_{\ov{0}0} & 0 \\
0 & -\partial_1 \log h_{\ov{0}0} 
\end{bmatrix}, \\  
D_{\ov{1}} = \ov{\partial_1}, \quad  
C_1 = \frac{1}{2} \cU = 
\begin{bmatrix}
0 & e^{t^1} \\
1 & 0  
\end{bmatrix}, \quad 
\tC_{\ov{1}} = 
\frac{1}{2} \ov\cU =
\begin{bmatrix}
0 &  h_{\ov{0}0}^{-2} \\
e^{\ov{t^1}} h_{\ov{0}0}^2 & 0  
\end{bmatrix},  \\ 
\cQ =  \partial_E+ \mu - D_E   
= 
\begin{bmatrix}
-\frac{1}{2} - 2 \partial_1 \log h_{\ov{0}0} & 0 \\
0 & \frac{1}{2} +2 \partial_1 \log h_{\ov{0}0} 
\end{bmatrix},    
\end{gather*} 
where $\partial, \ov{\partial}$ are 
the connections given by the given trivialization 
of $K$. 

\begin{remark}
\label{rem:P1tt*} 
(i) Takahashi \cite{takahashi-tt*} classified 
the $tt^*$-geometry of rank 2. 
The $\Proj^1$ case is included in the consideration 
in Section 5 \emph{ibid.},  
but this does not seem to appear in Theorem 5.1 \emph{ibid}.  
It is shown in Lemma 2.1 \emph{ibid}.\   
that the Hermitian metric $h$ is 
represented by a diagonal matrix with determinant $1$. 
 
(ii) From the theory 
of (trTERP)$+$(trTLEP) structure on the tangent bundle, 
it follows that $h,C,\tC,\cU,\ov{\cU},\cQ$ are invariant 
under the flow of the unit vector field 
$(\partial/\partial t^0), (\partial/\partial \ov{t^0})$. 
Therefore, the calculation here 
determines the Cecotti-Vafa structure on the big 
quantum cohomology. Moreover we have 
$D_E + \cQ = \partial_E + \mu$ and $\Lie_{E-\ov{E}} h =0$. 
In the case of $\Proj^1$, this means that $h_{\ov{0}0}$  
depends only on $|q|$. See \cite{hertling-tt*}.  

(iii) We can show that our procedure for 
the Birkhoff factorization gives convergent 
series for sufficiently small values of $|q|$. 
In particular, the expansion for 
$h_{\ov{0}0}$ converges for small $|q|$. 

(iv) Since the preprint version \cite{iritani-realint-preprint} 
of this paper was written, Dorfmeister-Guest-Rossman \cite{DGR} 
found that the $tt^*$-geometry of $\Proj^1$ 
gives a new example of a CMC surface 
in Minkowski space $\R^{2,1}$. 
\end{remark} 

We explain a different way to calculate 
the Hermitian metric $h_{\ov{0}0}$ 
due to Cecotti-Vafa \cite{cecotti-vafa-exactsigma}. 
The $tt^*$-equation 
$[D_1,D_{\ov{1}}] + [C_1,\tC_{\ov{1}}] =0$ 
(see Proposition \ref{prop:CV-str}) 
gives the following differential equation 
for $h_{\ov{0}0}$:  
\begin{equation}
\label{eq:diffeq_h00}
\partial_1 \ov{\partial_{1}} 
\log h_{\ov{0}0} = - 
h_{\ov{0}0}^{-2} + |q|^2 h_{\ov{0}0}^2. 
\end{equation} 
Cecotti-Vafa \cite{cecotti-vafa-exactsigma} 
identified $h_{\ov{0}0}$ with a unique solution 
to (\ref{eq:diffeq_h00}) expanded in the form 
\[
h_{\ov{0}0} = \sum_{n=0}^\infty F_n|q|^{2n}, \quad 
F_0 = a_\tau, \quad F_n\in \C[a_\tau,a_\tau^{-1}], 
\quad a_\tau = -2 \log |q| -4\gamma. 
\]  
The equation (\ref{eq:diffeq_h00}) gives an infinite 
set of recursive differential equations for $F_n$. 
It is easy to check that the differential equations 
determine the Laurent polynomial $F_n$ \emph{uniquely}. 
Moreover it turns out that 
$F_n\in \Q[a_\tau]$ and $\deg F_n = 2n+1$. 
The \emph{existence} of such a solution 
seems to be non-trivial, but the Birkhoff factorization 
certainly gives such $h_{\ov{0}0}$. 
By physical arguments, Cecotti-Vafa 
\cite{cecotti-vafa-top-antitop, cecotti-vafa-classification, 
cecotti-vafa-exactsigma} 
showed that $h_{\ov{0}0}$ should be positive and smooth 
on the positive real axis $0< |q| <\infty$\footnote{
For this, the constant $\gamma$ in 
$a_\tau$ must be the very Euler constant.}. 
Since the Landau-Ginzburg mirror of $\Proj^1$ 
is given by a cohomologically tame function, 
this follows from the Sabbah's result \cite{sabbah} 
in singularity theory (see Remark \ref{rem:sabbah}). 
Therefore, the Cecotti-Vafa structure for $\Proj^1$ 
is well-defined and positive definite 
on the whole $H^*(\Proj^1)$. 

Cecotti-Vafa \cite{cecotti-vafa-top-antitop} 
also found that the differential equation 
(\ref{eq:diffeq_h00}) is equivalent to the 
Painlev\'{e} III equation: 
\[
\frac{d^2u}{dz^2} + \frac{1}{z} \frac{du}{dz} = 4 
\sinh(u), \quad h_{\ov{0}0} = e^{u/2} |e^{-t^1/2}|, 
\quad z = 4 |e^{t^1/2}|.  
\] 
It seems that the solution corresponding to 
our $h_{\ov{0}0}$ has been obtained in the study of 
Painlev\'{e} III equation 
\cite{its-novokshenov, mccoy-tracy-wu-PIII} 
(in fact, the first few terms of the expansion 
of their solutions match with ours).  
If this is the case, $h_{\ov{0}0}$ should have 
the asymptotics \cite{its-novokshenov, mccoy-tracy-wu-PIII} 
(also appearing in \cite{cecotti-vafa-exactsigma}):  
\[ 
h_{\ov{0}0} \sim \frac{1}{\sqrt{|q|}} 
\left(1-\frac{1}{2\sqrt{\pi}|q|^{1/4}}e^{-8|q|^{1/2}}\right) 
\] 
as $|q|\to \infty$. 
With respect to the metric $h_{\ov{1}1}=h_{\ov{0}0}^{-1}$ 
on the K\"{a}hler moduli space $H^2(\Proj^1)/2\pi\iu 
H^2(\Proj^1,\Z)$, a neighborhood of the 
large radius limit point $q=0$ has negative curvature, 
but does not have finite volume. 
The curvature 
$-\frac{2}{h_{\ov{0}0}}(1-|q|^2 h_{\ov{0}0}^4)$  
goes to zero as $|q|\to 0$ and $|q|\to \infty$ 
and the total curvature is $-\pi/4$. 
%The author does not know if this is negative everywhere.  
%It was pointed out in \cite{cecotti-vafa-top-antitop, 
%cecotti-vafa-exactsigma} 
%that the $tt^*$-metric is uniquely fixed by 
%the regularity and the consistency in many examples.  
Much more examples including $\Proj^n$, $\Proj^1/\Z_n$ 
are calculated in physics literature. 
We refer the reader to    
\cite{cecotti-vafa-top-antitop,cecotti-vafa-exactsigma, 
cecotti-vafa-massiveorb}. 

\section{Appendix} 
\subsection{Proof of (\ref{eq:Birkhoff_withestimate})} 
\label{subsec:orderestimate_Birkhoff} 
Birkhoff's theorem implies that 
there exists an open dense neighborhood of $\unit$ 
in the loop group $LGL_N(\C)$ 
which is diffeomorphic to the product of subgroups
$L^+_1GL_N(\C)\times L^-GL_N(\C)$ \cite{pressley-segal}.  
We use the inverse function theorem for Hilbert manifolds 
to explain the order estimate in 
(\ref{eq:Birkhoff_withestimate}). 
Consider the space $LGL_N(\C)^{1,2}$ of Sobolev loops 
which consists of maps $\lambda\colon S^1 \to GL_N(\C)$ such 
that $\lambda$ and its weak derivative $\lambda'$ are 
square integrable. 
Note that this is a subgroup of 
the group of continuous loops by 
Sobolev embedding theorem 
$W^{1,2}(S^1) \subset C^0(S^1)$ and 
the multiplication theorem 
$W^{1,2}(S^1)\times W^{1,2}(S^1) \to W^{1,2}(S^1)$. 
$LGL_N(\C)^{1,2}$ is a Hilbert manifold 
modeled on the Hilbert space $W^{1,2}(S^1,\gl_N(\C))$. 
A co-ordinate chart of a neighborhood of $\unit$ is 
given by the exponential map $A(z) \mapsto e^{A(z)}$. 
Let $L^+_1GL_N(\C)^{1,2}$ be the subgroup of 
$LGL_N(\C)^{1,2}$ consisting of the boundary values 
of holomorphic maps $\lambda_+ \colon 
\{|z|<1\} \to GL_N(\C)$ satisfying $\lambda_+(0)=\unit$. 
Let $L^-GL_N(\C)^{1,2}$ be the subgroup of 
$LGL_N(\C)^{1,2}$ consisting of the boundary values 
of holomorphic maps $\lambda_- \colon 
\{|z|>1\}\cup\{\infty\} \to GL_N(\C)$. 
Notice that $W^{1,2} := W^{1,2}(S^1,\gl_N(\C))$ has 
the direct sum decomposition: 
\begin{equation}
\label{eq:Liealgdecomp_pos_neg}
W^{1,2} = W^{1,2}_+ \oplus W^{1,2}_-, \quad  
\end{equation} 
where $W^{1,2}_+$ ($W^{1,2}_-$) 
is the closed subspace of 
$W^{1,2}(S^1,\gl_N(\C))$ consisting of 
strictly positive Fourier series $\sum_{n>0} a_n z^n$ 
(non-positive Fourier series $\sum_{n\le 0} a_nz^n$ resp.) 
with $a_n\in \gl_N(\C)$. 
The subgroups $L^+_1GL_N(\C)^{1,2}$ and $L^-GL_N(\C)^{1,2}$ are 
modeled on the Hilbert spaces $W^{1,2}_+$ and 
$W^{1,2}_-$ respectively. 
Consider the multiplication map 
$L^+_1GL_N(\C)^{1,2} \times L^-GL_N(\C)^{1,2}
\to LGL_N(\C)^{1,2}$. 
The differential of this map at the identity is given by 
the sum $W^{1,2}_+ \times W^{1,2}_- \to W^{1,2}$ 
and is clearly an isomorphism. 
By the inverse function theorem for Hilbert manifolds, 
there exists a differentiable inverse map 
on a neighborhood of $\unit$.  
In the case at hand, we have  
$\|(B_t^{-1}Q_t B_t) (C_t \ov{Q}_t C_t^{-1})-\unit\|_{W^{1,2}}
= O(e^{-\epsilon t})$ as $t\to \infty$.  
Therefore, this admits the Birkhoff factorization 
(\ref{eq:Birkhoff_withestimate}) for $t\gg 0$ with 
$\|\tilde{B}_t-\unit\|_{W^{1,2}} = O(e^{-\epsilon t})$ and 
$\|\tilde{C}_t - \unit\|_{W^{1,2}} = O(e^{-\epsilon t})$.  
By Sobolev embedding, the order estimates hold also for 
the $C^0$-norm. 
(The method here does not work directly 
for the Banach manifold of continuous loops, 
since the decomposition (\ref{eq:Liealgdecomp_pos_neg}) 
is not true in this case.)

\bibliographystyle{amsplain}

\end{document}